%% LyX 2.4.0~RC3 created this file.  For more info, see https://www.lyx.org/.
%% Do not edit unless you really know what you are doing.
\documentclass[11pt,english]{article}
\usepackage[T1]{fontenc}
\usepackage{textcomp}
\usepackage[latin9]{inputenc}
\usepackage[a4paper]{geometry}
\geometry{verbose,tmargin=3cm,bmargin=3cm,lmargin=2.9cm,rmargin=2.9cm}
\usepackage{parskip}
\usepackage{xcolor}
\usepackage{babel}
\usepackage{calc}
\usepackage{units}
\usepackage{mathrsfs}
\usepackage{mathtools}
\usepackage{enumitem}
\usepackage{amsmath}
\usepackage{amsthm}
\usepackage{amssymb}
\usepackage[pdfusetitle,
 bookmarks=true,bookmarksnumbered=false,bookmarksopen=false,
 breaklinks=false,pdfborder={0 0 0},pdfborderstyle={},backref=false,colorlinks=true]
 {hyperref}
\hypersetup{
 linkcolor=blue,  citecolor=blue, urlcolor=blue}

\makeatletter

%%%%%%%%%%%%%%%%%%%%%%%%%%%%%% LyX specific LaTeX commands.
%% Because html converters don't know tabularnewline
\providecommand{\tabularnewline}{\\}

%%%%%%%%%%%%%%%%%%%%%%%%%%%%%% Textclass specific LaTeX commands.
      % auxiliary length 
\numberwithin{equation}{section}
\numberwithin{figure}{section}
\numberwithin{table}{section}
\theoremstyle{plain}
\newtheorem{thm}{\protect\theoremname}[section]
\theoremstyle{definition}
\newtheorem{defn}[thm]{\protect\definitionname}
\theoremstyle{remark}
\newtheorem{rem}[thm]{\protect\remarkname}
\theoremstyle{plain}
\newtheorem*{thm*}{\protect\theoremname}
\theoremstyle{remark}
\newtheorem*{acknowledgement*}{\protect\acknowledgementname}
\theoremstyle{plain}
\newtheorem{fact}[thm]{\protect\factname}
\theoremstyle{plain}
\newtheorem{prop}[thm]{\protect\propositionname}
\theoremstyle{plain}
\newtheorem{lem}[thm]{\protect\lemmaname}
\theoremstyle{plain}
\newtheorem{cor}[thm]{\protect\corollaryname}
\theoremstyle{remark}
\newtheorem{claim}[thm]{\protect\claimname}
\theoremstyle{plain}
\newtheorem{conjecture}[thm]{\protect\conjecturename}

%%%%%%%%%%%%%%%%%%%%%%%%%%%%%% User specified LaTeX commands.
\usepackage{bbm}
\usepackage[abbrev]{amsrefs}
\usepackage{xypic}
\usepackage{stackrel,amssymb,amsmath}

\newcommand{\rleftarrows}{\mathrel{\raise.75ex\hbox{\oalign{%
  $\hfil\scriptstyle\relbar$\cr
  \vrule width0pt height.5ex$\scriptstyle\smash\leftarrow$\cr}}}}
\newcommand{\rightlarrows}{\mathrel{\raise.75ex\hbox{\oalign{%
  $\scriptstyle\rightarrow$\hfil\cr
  $\scriptstyle\vrule width0pt height.5ex\relbar$\cr}}}}
\newcommand{\Rrelbar}{\mathrel{\raise.75ex\hbox{\oalign{%
  $\scriptstyle\relbar$\cr
  \vrule width0pt height.5ex$\scriptstyle\relbar$}}}}

\makeatletter
\def\rightleftarrowsfill@{\arrowfill@\rleftarrows\Rrelbar\rightlarrows}
\newcommand{\xrightleftarrows}[2][]{\ext@arrow 3399\rightleftarrowsfill@{#1}{#2}}
\makeatother

\setlist[enumerate]{itemsep=-0.5\parsep, topsep=-0.5\parsep}

\sloppy
\DeclareMathOperator{\rank}{rank}

\DeclareMathOperator{\Aut}{Aut}

\DeclareMathOperator{\Spec}{Spec}

\makeatother

\providecommand{\acknowledgementname}{Acknowledgement}
\providecommand{\claimname}{Claim}
\providecommand{\conjecturename}{Conjecture}
\providecommand{\corollaryname}{Corollary}
\providecommand{\definitionname}{Definition}
\providecommand{\factname}{Fact}
\providecommand{\lemmaname}{Lemma}
\providecommand{\propositionname}{Proposition}
\providecommand{\remarkname}{Remark}
\providecommand{\theoremname}{Theorem}

\begin{document}
\title{Geometry over finite local rings: Rigidity and Isospectrality}
\author{Yishai Lavi and Ori Parzanchevski}
\maketitle
\begin{abstract}
We study the simplicial order complexes obtained from free modules
over finite local rings. These complexes arise naturally as geodesic
spheres in Bruhat-Tits buildings over non-archimedean local fields.
We establish two forms of rigidity, showing that their automorphism
groups arise from the underlying algebraic group, and that they are
determined by sparse induced subgraphs. We compute the spectra of
these subgraphs and show that they form excellent expanders, which
results in expansion for geodesic powers of Bruhat-Tits buildings.
The computation also reveals that local rings with the same residue
order give rise to isospectral induced subgraphs. Combining this with
our rigidity results we show that the graphs arising from $n$-spaces
over $\mathbb{Z}/p^{r}$ and $\mathbb{F}_{p}[t]/(t^{r})$ are isospectral
and non-isomorphic.
\end{abstract}

\section{Introduction}

The spherical building associated with the group $PGL_{d}(\mathbb{F}_{q})$
is the simplicial complex of nontrivial subspaces of $\mathbb{F}_{q}^{d}$,
with cells given by flags. These complexes have been studied for almost
two centuries, and appear naturally in many areas of combinatorics
and algebra. One such appearance is as the local vista of cells in
(affine) Bruhat-Tits buildings associated with $p$-adic groups; for
example, the link of a vertex in the affine building of $PGL_{d}(\mathbb{Q}_{p})$
is the spherical building of $PGL_{d}(\mathbb{F}_{p})$. 

In this paper we study building-like complexes arising from $PGL_{d}$
over finite local rings such as $\mathbb{Z}/p^{r}$, which we call
\emph{free projective spaces}. These appear naturally when considering
``geodesic'' spheres of radius $r$ in $p$-adic buildings. Our
main results are two types of rigidity theorems for these complexes,
and a spectral analysis of bipartite subgraphs which they induce. 

The first rigidity result determines that all automorphisms of our
complexes are algebraic in nature, as is known to be true for classical
buildings of high rank. The second result shows that the combinatorial
information contained in very sparse subgraphs of the complex is enough
to reconstruct the full complex. To motivate our spectral analysis,
let us explain our interest in geodesic spheres, which arises from
a powering operation for simplicial complexes suggested in \cite{Kaufman2019Freeflagsover}.
It is a simple observation that spherical buildings over finite fields
(the $r=1$ case) are excellent \emph{expanders}, namely, their nontrivial
eigenvalues are small in magnitude comparing to the trivial one (the
Perron-Frobenius). This has surprisingly deep applications: in \cite{Gar73},
Garland regards the expansion of links in the building of $PGL_{d}(\mathbb{Q}_{p})$
as a $p$-adic form of curvature, and develops a local-to-global technique
to establish vanishing of cohomology for finite quotients of the building,
resolving a question of Serre on $S$-arithmetic groups (see also
\cite{grinbaum2022curvature}). Recently, Garland's technique was
rediscovered and extended in the context of high-dimenional expanders,
yielding new applications in combinatorics and computer science \cite{Oppenheim2017Localspectral,Kaufman2017Highorderrandom,Abdolazimi2022MatrixTrickleTheorem}. 

The study of geodesic spheres in \cite{Kaufman2019Freeflagsover}
is motivated by the search for a powering operation for simplicial
complexes, which preserves local, or high-dimensional expansion. It
is shown that the ``natural'' spheres in the buildings are not good
expanders \cite[§4]{Kaufman2019Freeflagsover}, and the notion of
geodesic spheres (see Remark \ref{rem:geodes} for their definition)
is suggested to overcome this. Our spectral analysis (Theorem \ref{thm:Spectrum})
shows that the geodesic spheres are good expanders, and this implies
that the geodesic powering operation preserves high-dimenional expansion.
This extends the main result of \cite{Kaufman2019Freeflagsover} which
treats only $PGL_{3}$, and in addition gives a much better proof
by exploring the structure of the complexes in greater details.

It is especially interesting to compare the geodesic spheres in the
buildings of $PGL_{d}$ over $\mathbb{Q}_{p}$, and over the field
of Laurent series $\mathbb{F}_{p}((t))$. The similarities and differences
between these fields and their buildings underpin deep mathematical
theories, e.g.\ Deligne--Kazhdan's ``close fields'', motivic uniformity,
and perfectoid theory. From our perspective, in both buildings the
vertex link (which is the geodesic sphere of radius $r=1$) is the
spherical building of $PGL_{d}(\mathbb{F}_{p})$, but when moving
to larger spheres we find ourselves comparing free projective spaces
over $\mathbb{Z}/(p^{r})$ and $\mathbb{F}_{p}[t]/(t^{r})$ respectively.
Our spectral analysis shows that that graphs arising from spheres
in the buildings of $PGL_{p}(\mathbb{Q}_{q})$ and $PGL_{d}(\mathbb{F}_{p}((t)))$
are isospectral, whereas our rigidity results show that they are not
isomorphic (Theorem \ref{thm:iso-noniso}), resolving a conjecture
stated in \cite{Kaufman2019Freeflagsover}. For a survey of the long
and interesting history of the problem of graph isospectrality see
\cite{gordon2005isospectral}.

\vspace{.5\baselineskip}

We now move to more precise terms. Let $\mathcal{O}$ be a Dedekind
domain, $\mathfrak{p}\trianglelefteq\mathcal{O}$ a prime ideal of
residue order $q$, and $r\geq1$. We observe 
\[
\mathcal{O}_{r}:=\nicefrac{\mathcal{O}}{\mathfrak{p}^{r}},
\]
which is a finite local ring of size $q^{r}$. 
\begin{defn}
\label{def:The-free-projective}The \emph{free projective $d$-space
over $\mathcal{O}_{r}$, }denoted $\mathbb{P}_{fr}^{d-1}(\mathcal{O}_{r})$,
is the simplicial complex whose vertices are the \textbf{free }submodules
$0\lneq V\lneq\mathcal{O}_{r}^{d}$, and whose cells correspond to
flags of free submodules. 
\end{defn}

\begin{rem}
\label{rem:geodes}Let us hint where this complex comes from: Denote
by $F$ the field of fractions of $\mathcal{O}$, and by $F_{\mathfrak{p}}$
its completion at $\mathfrak{p}$. Let $\mathcal{B}_{d,\mathfrak{p}}$
be the affine Bruhat-Tits building associated with $PGL_{d}(F_{\mathfrak{p}})$
(see e.g.\ \cite{Brown1989,Garrett1997} for definitions). We put
a new pure simplicial structure on the vertices of $\mathcal{B}_{d,\mathfrak{p}}$
as follows: $v_{0},\ldots,v_{d-1}$ form a $d$-cell if for every
$i$, possibly after reordering, there is a geodesic path of length
$r$, composed of edges of color one, from $v_{i}$ to $v_{i+1\,(\mathrm{mod}\,d)}$
(see \cite[§2.3]{Kaufman2019Freeflagsover} for more details). We
call the link of a vertex in this new complex an \emph{$r$-geodesic
sphere}; by \cite[Prop.\ 3.6]{Kaufman2019Freeflagsover}, it is a
complex isomorphic to $\mathbb{P}_{fr}^{d-1}(\mathcal{O}_{r})$.
\end{rem}

We think of $\mathcal{X}=\mathbb{P}_{fr}^{d-1}(\mathcal{O}_{r})$
as a colored complex, where the color of a vertex is the rank of its
associated submodule in $\mathcal{O}_{r}^{d}$, so that $\mathcal{X}$
is a pure $(d-2)$-dimensional complex, and every facet $\sigma\in\mathcal{X}^{d-2}$
has vertices of all colors. Denote by $\Aut^{0}(\mathcal{X})$ the
automorphisms of $\mathcal{X}$ which preserve colors, and by $\Aut\left(\mathcal{X}\right)$
all automorphisms. In Section \ref{sec:Automorphic-ridigity}, we
show that all automorphisms arise from the underlying algebraic group,
as follows: Let $A\in GL_{d}(\mathcal{O}_{r})$ and $\tau\in\Aut_{Ring}(\mathcal{O}_{r})$,
and denote $\overline{\tau}(v)=\left(\tau(v_{1}),\ldots,\tau(v_{d})\right)^{T}$.
The map $\varphi_{A,\tau}(V)=\left\{ A\overline{\tau}\left(v\right)\,\middle|\,v\in V\right\} $
induces an automorphism of $\mathcal{X}$, and so does 
\[
\varphi_{\bot}\left(V\right)=V^{\bot}:=\left\{ w\in\mathcal{O}_{r}^{d}\,\middle|\,\forall v\in V:\left\langle v,w\right\rangle =0\right\} ,
\]
where $\left\langle \ ,\ \right\rangle $ is the standard bilinear
form on $\mathcal{O}_{r}^{d}$.
\begin{thm}
\label{thm:aut0X}Let $\varphi\in\Aut^{0}(\mathcal{X})$. If $d\geq3$,
then there exist $A\in GL_{d}(\mathcal{O}_{r})$ and $\tau\in\Aut_{Ring}(\mathcal{O}_{r})$
such that $\varphi=\varphi_{A,\tau}$, so that 
\begin{equation}
\Aut^{0}\left(\mathcal{X}\right)\cong PGL_{d}\left(\mathcal{O}_{r}\right)\rtimes\Aut_{Ring}(\mathcal{O}_{r}),\label{eq:Aut0}
\end{equation}
and the full automorphism group of $\mathcal{X}$ is:
\begin{equation}
\Aut\left(\mathcal{X}\right)=\Aut^{0}\left(\mathcal{X}\right)\rtimes\left\langle \varphi_{\bot}\right\rangle \cong\left(PGL_{d}\left(\mathcal{O}_{r}\right)\rtimes\Aut_{Ring}(\mathcal{O}_{r})\right)\rtimes\left(\nicefrac{\mathbb{Z}}{2\mathbb{Z}}\right).\label{eq:Aut}
\end{equation}
\end{thm}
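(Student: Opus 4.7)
The plan is to reduce the color-preserving statement \eqref{eq:Aut0} to a module-theoretic Fundamental Theorem of Projective Geometry (FTPG) over $\mathcal{O}_r$, and then to reduce the full statement \eqref{eq:Aut} to the color-preserving case by controlling the color permutations an automorphism can induce.

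For the color-preserving case, the starting observation is that the containment relation among free submodules can be read off the complex: two vertices $V, W$ of distinct colors satisfy $V\subsetneq W$ iff they form an edge in $\mathcal{X}$, since cells are exactly flags. Hence every $\varphi\in\Aut^0(\mathcal{X})$ is an order isomorphism of the poset of proper nonzero free submodules of $\mathcal{O}_r^d$, and in particular restricts to a bijection on rank-$1$ vertices which takes collinear triples to collinear triples (a triple of rank-$1$ submodules being collinear iff they share a common rank-$2$ upper neighbour). The heart of the proof is then an FTPG for the free projective geometry over $\mathcal{O}_r$ for $d\geq 3$: every such collineation is induced by a pair $(A,\tau)\in GL_d(\mathcal{O}_r)\times\Aut_{Ring}(\mathcal{O}_r)$. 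A natural route is reduction modulo the maximal ideal: one shows that $\varphi$ descends to a collineation of $\mathbb{P}^{d-1}(\mathbb{F}_q)$, invokes the classical FTPG to obtain a semi-linear $(A_0,\tau_0)$ over $\mathbb{F}_q$, lifts this to $(A,\tau)$ over $\mathcal{O}_r$, and verifies that $\varphi_{A,\tau}^{-1}\circ\varphi$ fixes every rank-$1$ vertex; the poset reconstruction then forces it to fix every vertex of $\mathcal{X}$. Quotienting by the scalar kernel $\mathcal{O}_r^{\times}\cdot I$ yields \eqref{eq:Aut0}.

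For the full group, let $\varphi\in\Aut(\mathcal{X})$ induce a color permutation $\pi$ on $\{1,\ldots,d-1\}$. A local invariant such as the number of facets through a color-$k$ vertex, or the sizes of the color slices in its link, is symmetric in $k$ and $d-k$ but strictly varies otherwise, which forces $\pi$ to be either the identity or the involution $k\mapsto d-k$. Since $\varphi_{\bot}$ realises the involution, composing with $\varphi_{\bot}$ if necessary reduces to the color-preserving case; a direct check that $\varphi_{\bot}$ normalises $\Aut^0(\mathcal{X})$ and lies outside it then yields \eqref{eq:Aut}.

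The hardest part is the FTPG step over $\mathcal{O}_r$. The classical proof over a field uses Desargues-type coordinate arguments that do not transfer directly to a ring with zero divisors, and even the reduction-to-$\mathbb{F}_q$ strategy is delicate: the relation ``same residue'' on rank-$1$ vertices is not transparently combinatorial in $\mathcal{X}$, so one has to verify that $\varphi$ respects it (for instance by characterising same-residue pairs through their shared upper neighbours), and then the lift of $\tau$ from $\mathbb{F}_q$ to $\mathcal{O}_r$ must be organised coherently across all rank-$1$ lines, using rank-$2$ incidences to transport the scalar and Galois action between points. Organising this Hensel-style lift, while simultaneously keeping track of the unit-scalar ambiguity modulo $\mathcal{O}_r^{\times}$, is where most of the technical work will go.
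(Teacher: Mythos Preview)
Your outline diverges from the paper's proof, and the divergence is at the hardest step, where your proposal has a genuine gap.

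The paper does \emph{not} reduce modulo $\mathfrak{p}$. It works directly over $\mathcal{O}_r$ with a coordinate argument: fix a generator $w_1$ of $\varphi(\langle e_1\rangle)$; use rank-$2$ incidences to pin down unique $w_2,\ldots,w_d$ with $\varphi(\langle e_1+e_j\rangle)=\langle w_1+w_j\rangle$; define maps $\tau_j\colon\mathcal{O}_r\to\mathcal{O}_r$ by $\varphi(\langle e_1+\xi e_j\rangle)=\langle w_1+\tau_j(\xi)w_j\rangle$; and then use carefully chosen containments in rank-$2$ and rank-$(k{-}1)$ spaces to prove that the $\tau_j$ coincide and are additive and multiplicative. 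The crucial preliminary (Lemma~\ref{lem:subsetspan}) is that $\varphi$ respects spans of lines, i.e.\ $\varphi(L_1+\cdots+L_k)=\varphi(L_1)+\cdots+\varphi(L_k)$ whenever the sum is free; this is what lets the coordinate argument run over a ring with zero-divisors.

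Your reduction-then-lift strategy does not avoid this work. After lifting an arbitrary $(A_0,\tau_0)$ to some $(A,\tau)$, the composite $\psi=\varphi_{A,\tau}^{-1}\circ\varphi$ is only known to \emph{preserve residues} of rank-$1$ vertices, not to fix them; your sentence ``verifies that $\varphi_{A,\tau}^{-1}\circ\varphi$ fixes every rank-$1$ vertex'' is exactly the unproved step. There are $q^{(r-1)(d-1)}$ lines over each residue line, and the automorphisms of $\mathcal{X}$ that act trivially on residues form a group at least as large as the congruence kernel $\ker\bigl(PGL_d(\mathcal{O}_r)\to PGL_d(\mathbb{F}_q)\bigr)$. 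Showing that $\psi$ lies in (the image of) this kernel is the whole problem again, restricted to automorphisms trivial mod $\mathfrak{p}$. Your ``Hensel-style lift'' would have to redo the paper's coordinate construction layer by layer in the $\mathfrak{p}$-adic filtration, which is more bookkeeping, not less. A second wrinkle: you cannot assume a priori that the $\tau_0$ produced by classical FTPG lifts to an element of $\Aut_{Ring}(\mathcal{O}_r)$; this is true a posteriori, but your argument needs it before you know $\varphi$ is semilinear.

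For the passage from $\Aut^0$ to $\Aut$, your idea is fine and close to the paper's: the paper first shows (via gallery-connectedness of facets) that any $\varphi\in\Aut(\mathcal{X})$ induces a single permutation of colors, then uses the exact counts $|\mathcal{X}_n|={d\choose n}_q q^{(r-1)n(d-n)}$ and the degrees $S_{n-1}^{d-1}$ to force that permutation to be $\mathrm{id}$ or $k\mapsto d-k$, and composes with $\varphi_\bot$ in the latter case.
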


The analogue result for spherical buildings associated with classical
groups was proved by Tits \cite[§9]{Tits1974Buildingssphericaltype}
(for the affine case, see \cite{weiss2008structure}). We should therefore
point out that our complexes are not buildings -- see Proposition
\ref{prop:not-building}. Comparing our work with the classic case
where one works over a field, the main difference is that the sum
and intersection of free modules need not be free, in which case they
do not appear in the complex. The proof itself involves a study of
the relations between algebraic and combinatorial incidence aspects
of the submodules. Many of these are rather intricate, and become
simple when reduced to the field case ($r=1$).

Section \ref{sec:Subgraph-rigidity} which follows, establishes two
forms of ``subgraph rigity'':
\begin{thm*}[\ref{thm:subgraph_rigidity}]
Let $\mathcal{X}=\mathbb{P}_{fr}^{d-1}(\mathcal{O}_{r})$ as before,
and let $\mathcal{X}_{m,n}$ be the subgraph induced by the vertices
of colors $m$ and $n$ for some $1\leq m<n\leq d-1$.
\begin{enumerate}
\item The complex $\mathcal{X}$ can be reconstructed from $\mathcal{X}_{m,n}$.
\item Every automorphism of $\mathcal{X}_{m,n}$ extends uniquely to an
automorphism of $\mathcal{X}$, giving
\[
\Aut^{0}\left(\mathcal{X}_{m,n}\right)=\Aut^{0}\left(\mathcal{X}\right),\text{ and }\Aut\left(\mathcal{X}_{m,n}\right)=\begin{cases}
\Aut\left(\mathcal{X}\right) & m+n=d\\
\Aut^{0}\left(\mathcal{X}\right) & else.
\end{cases}
\]
\end{enumerate}
\end{thm*}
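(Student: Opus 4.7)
The plan is to prove (1) by canonically identifying each vertex of $\mathcal{X}$ with a combinatorial invariant extractable from $\mathcal{X}_{m,n}$, and then derive (2) by showing that these invariants are automatically preserved by graph automorphisms; combining with Theorem \ref{thm:aut0X} will yield the claimed automorphism groups.

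\textbf{Distinguishing the color classes.} I would first compare the degrees in $\mathcal{X}_{m,n}$ of rank-$m$ versus rank-$n$ vertices: the former is the number of free rank-$n$ supermodules of a fixed free rank-$m$ submodule of $\mathcal{O}_r^d$, and the latter the dual count. A direct computation (extending the $q$-binomial identity that in the field case gives equality iff $m+n=d$) should show these degrees coincide precisely when $m+n=d$. Hence if $m+n \ne d$, every graph automorphism of $\mathcal{X}_{m,n}$ preserves the two color classes, so it lies in $\Aut^{0}(\mathcal{X})$; if $m+n=d$, the orthogonal complement $\varphi_{\bot}$ restricts to a color-swapping automorphism of $\mathcal{X}_{m,n}$.

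\textbf{Reconstructing vertices of other colors.} For each color $k \notin \{m,n\}$, I would associate to every free rank-$k$ submodule $U \leq \mathcal{O}_r^d$ the pair
\[
\Delta(U)\;=\;\Bigl(\bigl\{V \subseteq U : V \text{ free, rank } m\bigr\},\ \bigl\{W \supseteq U : W \text{ free, rank } n\bigr\}\Bigr),
\]
and show that $\Delta$ is injective: for $k \geq m$ one expects $U = \sum_{V\in\Delta(U)_1} V$ since any basis of $U$ lies in free rank-$m$ direct summands, and dually for $k \leq n$ one has $U = \bigcap_{W\in\Delta(U)_2} W$. Next, I would characterize the image of $\Delta$ as those pairs satisfying a purely combinatorial closure condition within $\mathcal{X}_{m,n}$ (maximality under all common upper/lower bounds dictated by the bipartite incidence). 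This yields a canonical bijection between rank-$k$ vertices of $\mathcal{X}$ and subsets of $\mathcal{X}_{m,n}$, and the incidence relations between different colors are likewise read off from containments of $\Delta$-data, recovering the full complex $\mathcal{X}$ from $\mathcal{X}_{m,n}$, which proves (1).

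\textbf{Extension of automorphisms and obstacle.} Any $\varphi \in \Aut(\mathcal{X}_{m,n})$ acts on pairs $\Delta(U)$ componentwise; since $\varphi$ preserves the incidence relations used in the closure condition, it permutes $\im(\Delta)$ and therefore extends uniquely to a vertex bijection $\widetilde{\varphi}$ on $\mathcal{X}$ respecting all inclusions, giving an automorphism of $\mathcal{X}$. Combined with the color-class analysis and Theorem \ref{thm:aut0X}, this establishes $\Aut(\mathcal{X}_{m,n}) = \Aut(\mathcal{X})$ when $m+n=d$ and $\Aut^{0}(\mathcal{X})$ otherwise, proving (2). The main technical obstacle I anticipate is the injectivity and image-characterization of $\Delta$ for intermediate ranks $m < k < n$, since over a non-field local ring a submodule generated by free rank-$m$ submodules need not itself be free of the correct rank; handling this will require the structure theorem for finitely generated modules over $\mathcal{O}_r$ and careful basis-extension arguments that go beyond the field case treated in the proof of Theorem \ref{thm:aut0X}.
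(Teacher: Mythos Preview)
Your strategy is genuinely different from the paper's: you aim to recover every free rank-$k$ module at once via its ``shadow'' $\Delta(U)$ in $\mathcal{X}_{m,n}$, whereas the paper works incrementally, adjoining one color at a time (first $m-1,m-2,\ldots,1$ via duality, then $m+1,\ldots,n-1$, then $n+1,\ldots,d-1$), and at each step uses an explicit numerical criterion---the exact count of common neighbors, supplied by Proposition \ref{prop:contains_submodules}---to detect when the relevant sum or intersection is free.

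The gap in your plan is the image characterization of $\Delta$, which you propose to handle by a ``maximality/closure'' condition. This is not enough over a non-field $\mathcal{O}_r$: take $d=4$, $m=1$, $n=3$, $r=2$, and $M=\langle e_1,\pi e_2\rangle$ of type $(1;1)$. Writing $\mathcal{A}=\{L\in\mathcal{X}_1:L\subseteq M\}$ and $\mathcal{B}=\{W\in\mathcal{X}_3:M\subseteq W\}$, one checks directly that $\sum_{L\in\mathcal{A}}L=M=\bigcap_{W\in\mathcal{B}}W$, so $(\mathcal{A},\mathcal{B})$ is a closed pair in the Galois sense---yet $M$ is not a free $2$-space, and would appear spuriously in your reconstructed $\mathcal{X}_2$. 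Closure alone therefore does not single out the free submodules; this is exactly the non-field difficulty you flag at the end, but it bites earlier than you anticipate. The paper avoids a global closure criterion and instead uses the numerical tests above together with a somewhat delicate lemma (Lemma \ref{lem:V1+V2=000020subspaces}): for $V_1,V_2\in\mathcal{X}_n$ with $V_1+V_2\in\mathcal{X}_{n+1}$, every $m$-subspace of $V_1+V_2$ lies in an $n$-space spanned by $m$-subspaces of $V_1$ and of $V_2$, which is what lets one read off the $m$-shadow of a new $(n+1)$-vertex from $\mathcal{X}_{m,n}$ alone. Your approach could plausibly be repaired by supplementing closure with the cardinality constraint $|\Delta(U)_1|=S_m^k$ (the $q$-adic argument in the paper does show this forces freeness of rank $k$ among all submodules), but you would still need a separate mechanism for $k<m$ and $k>n$, where one component of $\Delta(U)$ is empty and the Galois picture collapses.
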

The challenge here is to exploit the rather sparse combinatorial information
given by two ranks to reconstruct the entire complex. Along the reconstruction,
we establish that every partial automorphism can be extended (uniquely)
as we reveal more of the original complex. Again, the field case $r=1$
is trivial in comparison with the general one.

Section \ref{sec:Spectrum-of-subgraphs} determines the spectrum of
the graphs $\mathcal{X}_{1,n}$ induced by modules of ranks $1$ and
$2\leq n\leq d-1$ in $\mathcal{X}$. Theorem \ref{thm:Spectrum}
gives the complete spectrum with multiplicities, and in particular
we obtain that the second normalized eigenvalue equals 
\[
\sqrt{\frac{\left[d-n\right]_{q}}{\left[n\right]_{q}\left[d-1\right]_{q}}q^{n-1}}\approx\frac{1}{\sqrt{q^{n-1}}},
\]
where $\left[n\right]_{q}=\frac{q^{n}-1}{q-1}$ is the $q$-number.
In particular, this is independent of $r$, and can be made arbitrarily
small by changing $q$ or $n$. This shows that these graphs are excellent
expanders, which was the motivation to exploring $\mathcal{X}_{1,2}$
for $\mathcal{X}=\mathbb{P}_{fr}^{2}(\mathcal{O}_{r})$ in \cite{Kaufman2019Freeflagsover}. 

In addition, Theorem \ref{thm:Spectrum} shows that the spectrum of
$\mathcal{X}_{1,n}$ depends only on $d,n,r$ and the residue order
$q$. Thus, graphs arising from rings such as $\mathbb{Z}/p^{r}$
and $\mathbb{F}_{p}[t]/(t^{r})$ are isospectral, and it is natural
to ask whether they are isomorphic. To prove that they are not, we
use both rigidity theorems: as $\mathcal{X}_{1,n}$ has the same automorphism
group as $\mathcal{X}$ (Theorem \ref{thm:subgraph_rigidity}), which
is given by the underlying algebra (Theorem \ref{thm:aut0X}), it
is enough to determine that the corresponding algebraic groups are
not isomorphic. This turns out to hold with the exception of the case
$d-1=q=r=n=2$; in this case the automorphism groups turn out to be
equivalent, but the graphs themselves are still non-isomorphic. We
obtain:
\begin{thm*}[\ref{thm:iso-noniso}]
If $p\in\mathbb{Z}$ is a prime, then for any $r\geq2$, $d\geq3$
and $1<n<d$ the graphs 
\[
\mathbb{P}_{1,n}^{d-1}\left(\mathbb{Z}/(p^{r})\right),\quad\text{and }\quad\mathbb{P}_{1,n}^{d-1}\left(\mathbb{F}_{p}[t]/(t^{r})\right)
\]
are isospectral and non-isomorphic.
\end{thm*}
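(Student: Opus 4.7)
The plan is to prove the two claims separately, leveraging all three main theorems of the paper.

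\textbf{Isospectrality.} This part is essentially a corollary of Theorem \ref{thm:Spectrum}. Both $\mathbb{Z}/p^r$ and $\mathbb{F}_p[t]/(t^r)$ are finite local rings of cardinality $p^r$ with residue field $\mathbb{F}_p$, so they share the same residue order $q=p$. Since the spectrum of $\mathbb{P}_{1,n}^{d-1}(\mathcal{O}_r)$ computed in Theorem \ref{thm:Spectrum} depends on $\mathcal{O}_r$ only through $d, n, r$ and $q$, the two graphs are isospectral.

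\textbf{Non-isomorphism, generic case.} I would argue by contradiction. Suppose $\varphi\colon\mathbb{P}_{1,n}^{d-1}(\mathbb{Z}/p^r)\to\mathbb{P}_{1,n}^{d-1}(\mathbb{F}_p[t]/(t^r))$ is a graph isomorphism. First I would verify that the reconstruction procedure of Theorem \ref{thm:subgraph_rigidity} applies equally to isomorphisms between distinct (but combinatorially analogous) free projective spaces: the $\mathcal{X}_{m,n}$-to-$\mathcal{X}$ reconstruction is canonical in the pair $(d,\mathcal{O}_r)$, and each step produces a unique extension on both sides. This yields a unique simplicial isomorphism $\widetilde\varphi\colon\mathbb{P}_{fr}^{d-1}(\mathbb{Z}/p^r)\to\mathbb{P}_{fr}^{d-1}(\mathbb{F}_p[t]/(t^r))$ extending $\varphi$. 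Next I would observe that the classification in Theorem \ref{thm:aut0X} likewise extends to isomorphisms between two different rings: its proof only uses that source and target are of the form $\mathbb{P}_{fr}^{d-1}(\mathcal{O}_r)$, and concludes that $\widetilde\varphi$ arises from a pair $(A,\tau)$ (possibly composed with $\varphi_\bot$) with $\tau$ a ring isomorphism between the two rings. But $\mathbb{Z}/p^r$ has characteristic $p^r$ while $\mathbb{F}_p[t]/(t^r)$ has characteristic $p$, so no such $\tau$ exists when $r\geq 2$, a contradiction.

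\textbf{Exceptional case $d-1=q=r=n=2$.} In this case the automorphism groups of the two complexes are abstractly isomorphic, so the soft group-theoretic argument above fails and one must supply a direct combinatorial invariant. The approach is to pick a local statistic of $\mathcal{X}_{1,2}$ which can be shown to be graph-invariant but sensitive to the ring structure, such as the number of rank-$2$ submodules that share a common rank-$1$ subvertex in a prescribed incidence pattern, or the count of small motifs (triangles, $4$-cycles with a specified edge-color pattern, common neighborhoods of a pair of rank-$1$ vertices, etc.). Concretely, I would fix a candidate invariant inspired by the proof of Theorem \ref{thm:Spectrum} (where the distinction between the two rings first becomes visible in some higher-order count) and compare its value on the two sides, exploiting the extra arithmetic rigidity of $\mathbb{Z}/4$ (nilpotents interact with doubling) versus $\mathbb{F}_2[t]/(t^2)$ (characteristic $2$). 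Both complexes are finite and quite small here, so any proposed invariant can be checked by explicit enumeration if necessary.

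The main obstacle is this last step: the algebraic machinery offers no leverage, and one must identify a concrete combinatorial feature that distinguishes the two graphs. The generic case, by contrast, reduces cleanly once one checks that the proofs of Theorems \ref{thm:aut0X} and \ref{thm:subgraph_rigidity} yield the slightly stronger statements about isomorphisms between different rings, which appears to be routine.
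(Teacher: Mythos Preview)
Your isospectrality argument matches the paper's.

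For non-isomorphism, your approach differs from the paper's and is actually cleaner. The paper argues via automorphism groups: if $\mathcal{X}_{1,n}\cong\mathcal{X}'_{1,n}$ then $\Aut^0(\mathcal{X}_{1,n})\cong\Aut^0(\mathcal{X}'_{1,n})$, hence by Theorem~\ref{thm:subgraph_rigidity} $\Aut^0(\mathcal{X})\cong\Aut^0(\mathcal{X}')$, which by Theorem~\ref{thm:aut0X} forces $PGL_d(\mathbb{Z}/p^r)\rtimes\Aut_{Ring}(\mathbb{Z}/p^r)\cong PGL_d(\mathbb{F}_p[t]/t^r)\rtimes\Aut_{Ring}(\mathbb{F}_p[t]/t^r)$. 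Since $|\Aut_{Ring}(\mathbb{Z}/p^r)|=1$ while $|\Aut_{Ring}(\mathbb{F}_p[t]/t^r)|=(p-1)p^{r-2}$, these groups have different orders unless $p=r=2$. That residual case is then handled for $d\geq 4$ by citing a theorem of Bunina on $PGL_d$ over local rings, and for $d=3$ by explicit computer verification---because for $d=3,\,p=r=2$ the two automorphism groups turn out to be abstractly isomorphic, so the group-size comparison genuinely collapses there.

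Your route avoids this case analysis entirely. Lifting $\varphi$ to an isomorphism $\mathcal{X}\to\mathcal{X}'$ is exactly Corollary~\ref{cor:isom-graph}, already in the paper. Your further observation---that the proof of Theorem~\ref{thm:aut0X} runs unchanged for a color-preserving isomorphism between complexes over two different local rings of the same residue order, with $\tau$ now a ring isomorphism $\mathcal{O}_r\to\mathcal{O}'_r$---is correct: each lemma in Section~\ref{sec:Automorphic-ridigity} uses only the combinatorics of source and target, never that they coincide. This yields a ring isomorphism $\mathbb{Z}/p^r\to\mathbb{F}_p[t]/t^r$, impossible for $r\geq 2$ by the characteristic argument you give, and it covers \emph{all} parameter values uniformly, including $d-1=q=r=n=2$. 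Your separate treatment of that case is therefore unnecessary; the exception you flag belongs to the paper's automorphism-group strategy, not to yours. When you wrote ``the soft group-theoretic argument above fails,'' you mislabeled your own argument---it is not group-theoretic, and it does not fail there.
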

\begin{acknowledgement*}
This research was supported by Israel Science Foundation (grant No.\ 2990/21).
\end{acknowledgement*}

\section{Preliminaries}

In this section we collect and prove some algebraic and combinatorial
properties of our complexes, and show that they fall short of being
Tits buildings. Throughout the section $\mathfrak{p}$ is a prime
ideal of residue order $q$ in a Dedekind domain $\mathcal{O}$, and
$\mathcal{O}_{r}=\mathcal{O}/\mathfrak{p}^{r}$ for a fixed $r\geq1$.
By some abuse of notation we denote the image of $\mathfrak{p}$ in
$\mathcal{O}_{r}$ by $\mathfrak{p}$ as well, and we fix some uniformiser
$\pi\in\mathfrak{p}\backslash\mathfrak{p}^{2}$.

We shall call a free $\mathcal{O}_{r}$-module of rank $n$ an \emph{$n$-space,
and }a vector $v\in\mathcal{O}_{r}^{d}$ \emph{primitive }if it spans
a $1$-space, which is equivalent to having at least one coordinate
in $\mathcal{O}_{r}^{\times}=\mathcal{O}_{r}\backslash\mathfrak{p}$.
This paper is mostly concerned with $\mathcal{O}_{r}$-submodules
of $\mathcal{O}_{r}^{d}$, and we list now some simple algebraic observations
regarding them, each a simple exercise using the previous ones.
\begin{fact}
\label{fact:fact}Let $V\leq\mathcal{O}_{r}^{d}$, and $n$ the minimal
size of a generating set for $V$.
\begin{enumerate}
\item All the ideals of $\mathcal{O}_{r}$ are $\mathcal{O}_{r}=\mathfrak{p}^{0}>\mathfrak{p}>\ldots>\mathfrak{p}^{r-1}>\mathfrak{p}^{r}=0$.
\item $\alpha\mid\beta$ or $\beta\mid\alpha$ for any $\alpha,\beta\in\mathcal{O}_{r}$.
\item Any $A\in M_{n\times m}(\mathcal{O}_{r})$ can be made triangular
by elementary row operations, and can be brought to Smith Normal Form
(SNF) by elementary row+column operations.
\item $n\leq d$, and if $S$ generates $V$ then there exists $S'\subseteq S$
of size $n$ which generates $V$.
\item \label{enu:equiv-GLd}$V$ is equivalent under $GL_{d}(\mathcal{O}_{r})$
to $\mathcal{O}_{r}^{m}\times\mathfrak{p}^{k_{1}}\times\ldots\times\mathfrak{p}^{k_{n-m}}\times0^{\times(d-n)}$
for some $0<k_{1}\leq\ldots\leq k_{n-m}<r$. We say $V$ is of type
$(m\,;k_{1},\ldots,k_{n-m})$; it is an $n$-space iff $m=n$.
\item For $V$ of type $(m\,;k_{1},\ldots,k_{n-m})$, $\mathcal{O}_{r}^{d}/V$
and $V^{\bot}$ are of type $(d-n\,;r-k_{n-m},\ldots,r-k_{1})$.
\item If $W\leq V$ then $W$ is $n$-generated.
\end{enumerate}
\end{fact}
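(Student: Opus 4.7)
The plan is to verify parts (1)--(7) of Fact 2.1 in the given order, as each rests on its predecessors. For (1), I would use that $\mathcal{O}$ is a Dedekind domain, so its ideals factor uniquely into primes; the lattice isomorphism theorem identifies ideals of $\mathcal{O}_r=\mathcal{O}/\mathfrak{p}^r$ with ideals of $\mathcal{O}$ containing $\mathfrak{p}^r$, and unique factorisation forces these to be exactly $\mathfrak{p}^0,\mathfrak{p},\ldots,\mathfrak{p}^r$. Part (2) is then immediate: every $\alpha\in\mathcal{O}_r$ generates one of these totally-ordered principal ideals, so comparing valuations gives divisibility in one direction. For (3) I would mimic the classical Smith Normal Form proof over a PID, using (2) in place of Euclidean divisibility: within the nonzero entries of $A$, pick one generating the largest ideal (equivalently, of smallest $\mathfrak{p}$-valuation), move it to the top-left by swaps, and clear its row and column by elementary operations, since (2) guarantees it divides every other entry there; an inductive step on the lower-right submatrix completes the argument.

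For (4), I would reduce modulo $\mathfrak{p}$: $V/\mathfrak{p}V$ is a vector space over the residue field of dimension at most $d$ (since $V/\mathfrak{p}V\hookrightarrow(\mathcal{O}_r/\mathfrak{p})^d$), and by Nakayama's lemma any subset of $V$ whose reduction is a basis of $V/\mathfrak{p}V$ generates $V$; conversely any generating set must reduce to a spanning set, hence contain such a basis. This gives the minimal $n$ and the subset $S'\subseteq S$. For (5), take the $n$ minimal generators from (4) as rows of an $n\times d$ matrix and apply the SNF of (3): column operations correspond to a $GL_d(\mathcal{O}_r)$-change of basis of $\mathcal{O}_r^d$, while row operations only reshuffle the generators of $V$. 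The resulting diagonal matrix has nonzero rows $\alpha_i e_i$ (nonzero by minimality of $n$), each generating some $\mathfrak{p}^{k_i}$ by (1); grouping the $\alpha_i$ that are units (contributing $\mathcal{O}_r$ factors) against those that are not (contributing $\mathfrak{p}^{k_i}$ factors with $0<k_i<r$) yields the claimed decomposition.

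For (6), I would compute both $\mathcal{O}_r^d/V$ and $V^{\bot}$ directly from the standard form of (5), using the $\mathcal{O}_r$-linear isomorphism $\mathcal{O}_r/\mathfrak{p}^{k}\cong\mathfrak{p}^{r-k}$ given by multiplication by $\pi^{r-k}$. Both modules then come out, after reordering and relabelling, equal to $\mathcal{O}_r^{d-n}\oplus\bigoplus_{i}\mathfrak{p}^{r-k_i}$, which is of type $(d-n\,;r-k_{n-m},\ldots,r-k_1)$. For (7), I would reduce, via a surjection $\mathcal{O}_r^n\twoheadrightarrow V$ (which exists since $V$ is $n$-generated), to showing that every submodule $W'\leq\mathcal{O}_r^n$ is $n$-generated; then take any generating set of $W'$, arrange it as the rows of a $k\times n$ matrix, and apply SNF, noting that column operations implement a $GL_n(\mathcal{O}_r)$-change of basis of $\mathcal{O}_r^n$ (under which the $n$-generated property is preserved), while row operations preserve the submodule itself. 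The resulting diagonal matrix has at most $\min(k,n)\leq n$ nonzero rows, providing $n$ generators.

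The main delicate step I expect is (6): keeping track of the duality between annihilators and submodules, and verifying that the reordering of the exponents $k_i\mapsto r-k_{n-m+1-i}$ is correct in both the quotient and the orthogonal direction. Everything else is a routine application of SNF and Nakayama once the ring-theoretic groundwork of (1)--(3) is in place.
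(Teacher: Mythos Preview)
The paper does not actually prove this Fact; it only says that each item is ``a simple exercise using the previous ones,'' so there is no argument to compare against. Your plan is essentially what the authors have in mind, and all steps except one detail in (4) go through.

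The slip is the claimed injection $V/\mathfrak{p}V\hookrightarrow(\mathcal{O}_r/\mathfrak{p})^d$. This fails whenever $V$ is not a direct summand of $\mathcal{O}_r^d$: the kernel of the natural map $V/\mathfrak{p}V\to\mathcal{O}_r^d/\mathfrak{p}\mathcal{O}_r^d$ is $(V\cap\mathfrak{p}\mathcal{O}_r^d)/\mathfrak{p}V$, which need not vanish. For instance, with $d=1$ and $V=\mathfrak{p}\leq\mathcal{O}_r$ one has $V/\mathfrak{p}V\cong\mathfrak{p}/\mathfrak{p}^2\cong\mathbb{F}_q$, yet the image of $V$ in $\mathcal{O}_r/\mathfrak{p}$ is zero. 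So this does not by itself give $\dim_{\mathbb{F}_q}(V/\mathfrak{p}V)\leq d$. The conclusion $n\leq d$ is of course still true, and you already have the tool: take any finite generating set, form the corresponding matrix, and use (3) to row-reduce to echelon form; since there are only $d$ columns, at most $d$ rows survive, and these still generate $V$. The Nakayama half of your argument (extracting $S'\subseteq S$ of size $n$) is unaffected and correct.

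With that one patch, (1)--(7) all go through exactly along the lines you describe.
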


Let $\mathcal{X}=\mathbb{P}_{fr}^{d-1}\left(\mathcal{O}_{r}\right)$
be as in Definition \ref{def:The-free-projective}, and let $\mathcal{X}_{n}$
be the vertices of color $n$ in $\mathcal{X}$, which correspond
to $n$-spaces in $\mathcal{O}_{d}^{r}$. Recall the $q$-numbers
$[n]_{q}=\frac{q^{n}-1}{q-1}$, and let $[n]_{!q}=[n]_{q}[n-1]_{q}\ldots[1]_{q}$
and ${d \choose n}_{q}=\frac{[d]_{!q}}{[n]_{!q}[d-n]_{!q}}$ denote
the $q$-factorial and $q$-binomial coefficients.
\begin{prop}
\label{prop:Xn}The number of $n$-spaces in $\mathcal{O}_{r}^{d}$,
namely $\left|\mathcal{X}_{n}\right|$, is 
\[
S_{n}^{d}\coloneqq{d \choose n}_{q}\cdot q^{\left(r-1\right)n\left(d-n\right)}.
\]
\end{prop}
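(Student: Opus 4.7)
The plan is to apply orbit--stabilizer to the action of $G := GL_d(\mathcal{O}_r)$ on $\mathcal{X}_n$. First, I would invoke Fact~\ref{fact:fact}(\ref{enu:equiv-GLd}) with $m=n$ (no torsion part): every $n$-space is $G$-equivalent to the standard $V_0 := \mathcal{O}_r^n \oplus 0^{d-n}$, so $G$ acts transitively on $\mathcal{X}_n$ and $|\mathcal{X}_n| = |G|/|\Stab_G(V_0)|$.

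Next, I would compute $|GL_k(\mathcal{O}_r)|$ for general $k$. The key point is that $M \in M_k(\mathcal{O}_r)$ is invertible iff $\det M$ is a unit, which, since the ideals of $\mathcal{O}_r$ form the chain $\mathcal{O}_r > \mathfrak{p} > \ldots > 0$ (Fact~\ref{fact:fact}), is equivalent to $\det M \notin \mathfrak{p}$, i.e.\ to $\overline{M} \in GL_k(\mathbb{F}_q)$. The reduction map $GL_k(\mathcal{O}_r) \twoheadrightarrow GL_k(\mathbb{F}_q)$ is therefore surjective with every fibre of size $(q^{r-1})^{k^2}$, giving
\[
|GL_k(\mathcal{O}_r)| \;=\; q^{(r-1)k^2}\,|GL_k(\mathbb{F}_q)|.
\]

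Then I would describe $\Stab_G(V_0)$ explicitly. The condition $MV_0 \subseteq V_0$ says exactly that the first $n$ columns of $M$ have their last $d-n$ entries zero, so $M$ has the block form $M = \begin{pmatrix} A & B \\ 0 & D \end{pmatrix}$ with $A$ of size $n\times n$, $D$ of size $(d-n)\times(d-n)$, and $B \in M_{n\times(d-n)}(\mathcal{O}_r)$ arbitrary. Since $\det M = \det A \cdot \det D$, invertibility is equivalent to $A \in GL_n(\mathcal{O}_r)$ and $D \in GL_{d-n}(\mathcal{O}_r)$, yielding
\[
|\Stab_G(V_0)| \;=\; q^{rn(d-n)}\cdot|GL_n(\mathcal{O}_r)|\cdot|GL_{d-n}(\mathcal{O}_r)|.
\]

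Finally, I would assemble the ratio and invoke the classical field-case identity
\[
\frac{|GL_d(\mathbb{F}_q)|}{|GL_n(\mathbb{F}_q)|\,|GL_{d-n}(\mathbb{F}_q)|} \;=\; q^{n(d-n)}{d \choose n}_q,
\]
which is the same orbit--stabilizer argument applied over the residue field (i.e.\ the usual Grassmannian count). Collecting the exponents of $q$---namely $(r-1)\bigl[d^2 - n^2 - (d-n)^2\bigr] - rn(d-n) + n(d-n) = (r-1)n(d-n)$---produces $|\mathcal{X}_n| = {d \choose n}_q \cdot q^{(r-1)n(d-n)}$. There is no real obstacle: the only step requiring care is the $q$-exponent bookkeeping. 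Conceptually the computation is just the Levi decomposition of a maximal parabolic in $GL_d(\mathcal{O}_r)$, together with the fact that the arithmetic of this parabolic differs from its field-case counterpart only by uniform $(r-1)$-th power factors in each block.
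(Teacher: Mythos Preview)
Your proposal is correct and follows essentially the same orbit--stabilizer argument as the paper: transitivity via Fact~\ref{fact:fact}(\ref{enu:equiv-GLd}), identification of the stabilizer as the block upper-triangular parabolic, and a direct size count. The only cosmetic difference is that you factor the group orders through the residue-field case via the surjection $GL_k(\mathcal{O}_r)\twoheadrightarrow GL_k(\mathbb{F}_q)$ and the classical Grassmannian identity, whereas the paper writes out the products $\prod(q^k-q^i)$ explicitly; the bookkeeping is the same either way.
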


\begin{proof}
By Fact \ref{fact:fact}(\ref{enu:equiv-GLd}) $GL_{d}\left(\mathcal{O}_{r}\right)$
acts transitively on $\mathcal{X}_{n}$, and
\begin{align*}
\left|GL_{d}\left(\mathcal{O}_{r}\right)\right| & =\left(\prod\nolimits_{i=0}^{d-1}q^{d}-q^{i}\right)q^{d^{2}\left(r-1\right)},\\
\left|\text{Stab}_{GL_{d}\left(\mathcal{O}_{r}\right)}\left(\left\langle e_{1},\ldots,e_{n}\right\rangle \right)\right| & =\left|\left(\begin{matrix}GL_{n}(\mathcal{O}_{r}) & M_{n\times\left(d-n\right)}(\mathcal{O}_{r})\\
0 & GL_{d-n}(\mathcal{O}_{r})
\end{matrix}\right)\right|\\
 & {\textstyle =\prod_{i=0}^{n-1}\left(q^{n}-q^{i}\right)\prod_{i=0}^{d-1-n}\left(q^{d-n}-q^{i}\right)\cdot q^{rn(d-n)+(r-1)(n^{2}+(d-n)^{2})}},
\end{align*}
give $\left[GL_{d}\left(\mathcal{O}_{r}\right):\text{Stab}_{GL_{d}\left(\mathcal{O}_{r}\right)}\left(\left\langle e_{1},\ldots,e_{n}\right\rangle \right)\right]=S_{n}^{d}$.
\end{proof}
This is in fact a special case of the next proposition:
\begin{prop}
\label{prop:contains_submodules}Let $M\leq\mathcal{O}_{r}^{d}$ be
of type $\left(m\,;k_{1},\ldots,k_{t}\right)$. Then:.
\begin{enumerate}
\item The number of $n$-spaces \textbf{contained }in $M$ is:
\[
S_{n}^{m}\cdot q^{n\left(\sum_{j=1}^{t}r-k_{j}\right)}
\]
\item The number of $n$-spaces (in $\mathcal{O}_{r}^{d}$) \textbf{containing}
$M$ is:
\[
S_{d-n}^{d-m-t}\cdot q^{\left(d-n\right)\left(\sum_{j=1}^{t}k_{j}\right)}=S_{n-m-t}^{d-m-t}\cdot q^{\left(d-n\right)\left(\sum_{j=1}^{t}k_{j}\right)}
\]
\end{enumerate}
\end{prop}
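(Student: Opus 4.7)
The plan is to prove (1) directly by counting ordered bases of free $n$-subspaces of $M$, and to deduce (2) from (1) via the duality $V\mapsto V^{\perp}$.

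For (1), by Fact \ref{fact:fact}(\ref{enu:equiv-GLd}) I may act by $GL_{d}(\mathcal{O}_{r})$ and assume
\[
M=\mathcal{O}_{r}e_{1}\oplus\cdots\oplus\mathcal{O}_{r}e_{m}\oplus\mathfrak{p}^{k_{1}}e_{m+1}\oplus\cdots\oplus\mathfrak{p}^{k_{t}}e_{m+t}.
\]
I count ordered $n$-tuples $(v_{1},\ldots,v_{n})\in M^{n}$ that form a basis of a free $n$-space and divide by $|GL_{n}(\mathcal{O}_{r})|=\prod_{i=0}^{n-1}(q^{n}-q^{i})\cdot q^{n^{2}(r-1)}$. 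The underlying criterion, a standard Nakayama/SNF argument, is that $v_{1},\ldots,v_{n}\in\mathcal{O}_{r}^{d}$ form such a basis iff their reductions $\bar{v}_{1},\ldots,\bar{v}_{n}\in\mathbb{F}_{q}^{d}=\mathcal{O}_{r}^{d}/\mathfrak{p}\mathcal{O}_{r}^{d}$ are linearly independent; the backward direction extends the $\bar{v}_{i}$ to an $\mathbb{F}_{q}$-basis and lifts to an $\mathcal{O}_{r}$-basis of $\mathcal{O}_{r}^{d}$, while the forward direction is immediate.

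Because every $k_{j}\geq1$, the image of $M$ under $\mathcal{O}_{r}^{d}\twoheadrightarrow\mathbb{F}_{q}^{d}$ is the $m$-dimensional subspace $\mathbb{F}_{q}e_{1}\oplus\cdots\oplus\mathbb{F}_{q}e_{m}$, giving $\prod_{i=0}^{n-1}(q^{m}-q^{i})$ ordered LI choices for the reductions (and forcing $n\leq m$). Each such reduction has exactly $|M\cap\mathfrak{p}\mathcal{O}_{r}^{d}|=q^{(r-1)m+\sum_{j}(r-k_{j})}$ preimages in $M$, using $\mathfrak{p}^{k_{j}}\cap\mathfrak{p}=\mathfrak{p}^{k_{j}}$. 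Multiplying and dividing by $|GL_{n}(\mathcal{O}_{r})|$ collapses via Proposition \ref{prop:Xn} to
\[
{m \choose n}_{q}\cdot q^{n(r-1)(m-n)+n\sum_{j}(r-k_{j})}=S_{n}^{m}\cdot q^{n\sum_{j}(r-k_{j})},
\]
as required.

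For (2), I use that the standard bilinear form on $\mathcal{O}_{r}^{d}$ induces an order-reversing involution $V\mapsto V^{\perp}$ on \emph{all} $\mathcal{O}_{r}$-submodules (coming from the self-injectivity of the Gorenstein ring $\mathcal{O}_{r}$, or verified directly on standard-form representatives via Fact \ref{fact:fact}(\ref{enu:equiv-GLd})). By Fact \ref{fact:fact}(6), $V$ is an $n$-space iff $V^{\perp}$ is a $(d-n)$-space, and $V\supseteq M\iff V^{\perp}\subseteq M^{\perp}$, so the $n$-spaces containing $M$ correspond bijectively to the $(d-n)$-spaces contained in $M^{\perp}$, the latter being of type $(d-m-t;\,r-k_{t},\ldots,r-k_{1})$. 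Applying (1) to $M^{\perp}$ yields $S_{d-n}^{d-m-t}\cdot q^{(d-n)\sum_{j}(r-(r-k_{j}))}=S_{d-n}^{d-m-t}\cdot q^{(d-n)\sum_{j}k_{j}}$, and the alternative form follows from the $q$-binomial symmetry $\binom{d-m-t}{d-n}_{q}=\binom{d-m-t}{n-m-t}_{q}$. The main obstacle is twofold: pinning down the free-span criterion cleanly, and verifying that every admissible reduction admits exactly the same number of lifts in $M$---it is precisely the hypothesis $k_{j}\geq1$ that ensures $M\cap\mathfrak{p}\mathcal{O}_{r}^{d}$ has the clean product shape above, making the fiber count uniform. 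Once these two ingredients are in place, both counts are routine.
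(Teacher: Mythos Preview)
Your proof is correct and follows the same overall strategy as the paper: count ordered $n$-tuples spanning a free $n$-space and divide by $|GL_n(\mathcal{O}_r)|$ for part (1), then apply $V\mapsto V^{\perp}$ and Fact~\ref{fact:fact}(6) for part (2). The one organizational difference is in how the tuples are counted. The paper proceeds inductively, at each step counting primitive vectors in the quotient $M/\langle v_1,\ldots,v_j\rangle\cong\mathcal{O}_r^{m-j}\times\mathfrak{p}^{k_1}\times\cdots\times\mathfrak{p}^{k_t}$ and lifting back; you instead invoke the Nakayama-type criterion (basis of a free $n$-space $\Leftrightarrow$ reductions are $\mathbb{F}_q$-independent) once, count linearly independent $n$-tuples in $\overline{M}\cong\mathbb{F}_q^m$, and multiply by the uniform fiber size $|M\cap\mathfrak{p}\mathcal{O}_r^d|^n$. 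Your version is a bit more conceptual and avoids the step-by-step quotient bookkeeping; the paper's version, on the other hand, does not need to state or justify the reduction criterion separately. Both arrive at the same product and the same final expression.
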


\begin{proof}
(1) Denote $\Delta=\sum_{j=1}^{t}r-k_{j}$, so that $\left|M\right|=q^{rm+\Delta}$,
and there are $|M|-q^{(r-1)m+\Delta}=q^{(r-1)m+\Delta}\left(q^{m}-1\right)$
primitive vectors in $M$. More generally, if $v_{1},\ldots,v_{j}\in M$
are independent (where $0\leq j<m$), then $M/\left\langle v_{1},\ldots,v_{j}\right\rangle \cong\mathcal{O}_{r}^{m-j}\times\mathfrak{p}^{k_{1}}\times\ldots\times\mathfrak{p}^{k_{t}}$,
so there are $q^{(r-1)(m-j)+\Delta}\left(q^{m-j}-1\right)$ primitive
vectors in it, and each lifts to $|\left\langle v_{1},\ldots,v_{j}\right\rangle |=q^{rj}$
vectors in $M$. This gives all possible $v_{j+1}$ for which $v_{1},\ldots,v_{j+1}$
are independent, so by induction there are 
\[
\prod\nolimits_{j=0}^{n-1}q^{(r-1)(m-j)+\Delta+rj}\left(q^{m-j}-1\right)=q^{{n \choose 2}+n((r-1)m+\Delta)}\prod\nolimits_{i=m-n+1}^{m}\left(q^{i}-1\right)
\]
options for independent $v_{1},\ldots,v_{n}\in M$. Each spans an
$n$-space in $M$, and each $n$-space is obtained $\left|GL_{n}(\mathcal{O}_{r})\right|=q^{{n \choose 2}+n^{2}(r-1)}\prod_{i=1}^{n}\left(q^{i}-1\right)$
times, giving
\[
\frac{q^{{n \choose 2}+n((r-1)m+\Delta)}\prod_{i=m-n+1}^{m}\left(q^{i}-1\right)}{q^{{n \choose 2}+n^{2}(r-1)}\prod_{i=1}^{n}\left(q^{i}-1\right)}={\textstyle {m \choose n}_{q}\,}q^{(r-1)n(m-n)+n\Delta}=S_{n}^{m}q^{n\Delta}.
\]
For (2), note that $\varphi_{\bot}$ interchanges $n$-spaces containing
$M$ with $(d-n)$-spaces contained in $M^{\bot}\cong\mathcal{O}_{r}^{d-m-t}\times\mathfrak{p}^{r-k_{t}}\times\ldots\times\mathfrak{p}^{r-k_{1}}$,
of which there are $S_{d-n}^{d-m-t}q^{(d-n)\sum_{j=1}^{t}k_{j}}$
by (1).
\end{proof}

We now briefly explore the connection to the theory of buildings.
Let $G=GL_{d}(\mathcal{O}_{r})$, and let $\mathscr{B}=\left\{ L_{1},\ldots,L_{d}\right\} $
be the lines spanned by the standard basis of $\mathcal{O}_{r}^{d}$.
Let $\mathcal{A}\subseteq\mathcal{X}$ be the subcomplex induced by
modules spanned by subsets of $\mathscr{B}$, and $\sigma=\left\{ L_{1}+\ldots+L_{j}\,\middle|\,1\leq j\leq d-1\right\} $
(a $(d-2)$-dimensional facet in $\mathcal{A}$). As $\mathcal{X}$
is evidently covered by the $G$-translations of $\mathcal{A}$, it
is natural to ask whether $\left(\mathcal{X},G\mathcal{A}\right)$
is a Tits building. Equivalently, we can ask for a (B,N)-pair: The
$G$-stabilizer of $\sigma$ is the group of upper triangular matrices,
which we denote by $B$. The diagonal matrices $T\leq G$ are the
pointwise stabilizer of $\mathcal{A}$, and the monomial matrices
$N=N_{G}(T)$ are its set-wise stabilizer, so that $W=\nicefrac{N}{T}=\nicefrac{N}{B\cap N}$
acts on $\mathcal{A}$. The set $S$ of permutation matrices of the
form $\left(i\ i+1\right)$ reflects $\mathcal{A}$ along the faces
of $\sigma$, and $\left(W,S\right)$ is a Coxeter group acting simply-transitively
on the $(d-1)$-cells in $\mathcal{A}$. Nevertheless, we have:
\begin{prop}
\label{prop:not-building}For $d\geq3$ and $r\geq2$, $(B,N)$ is
not a Tits (B,N)-pair, and $\left(\mathcal{X},G\mathcal{A}\right)$
is not a building.
\end{prop}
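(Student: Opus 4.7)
The plan is to show directly that $(\mathcal{X}, G\mathcal{A})$ fails the building axiom requiring any two simplices to lie in a common apartment, and then to deduce that $(B,N)$ is not a BN-pair via the standard Tits correspondence between BN-pairs and buildings.

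For the first part, I take the two $1$-space vertices $v_{1} = \langle e_{1} \rangle$ and $v_{2} = \langle e_{1} + \pi e_{2} \rangle$; both generators are primitive, so these are legitimate vertices of $\mathcal{X}$, and they are distinct because $\pi \neq 0$ when $r \geq 2$. For any $g \in G$, the $1$-space vertices of the translated apartment $g\mathcal{A}$ are precisely the lines $\langle g e_{i} \rangle$, $1 \leq i \leq d$. If both $v_{1}$ and $v_{2}$ lay in a common apartment $g\mathcal{A}$, we would have $g e_{i} = a e_{1}$ and $g e_{j} = b(e_{1} + \pi e_{2})$ for distinct indices $i \neq j$ and units $a, b \in \mathcal{O}_{r}^{\times}$. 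The submodule $\langle a e_{1}, b(e_{1} + \pi e_{2}) \rangle = \langle e_{1}, \pi e_{2} \rangle$ is of type $(1;1)$ by Fact~\ref{fact:fact}, and hence not a free rank-$2$ direct summand of $\mathcal{O}_{r}^{d}$. This contradicts $\{g e_{i}, g e_{j}\}$ being extendable to a basis, so no common apartment can contain both vertices.

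For the BN-pair conclusion I invoke the Tits correspondence: any BN-pair in $G$ canonically determines a building whose chambers are $G/B$ and whose apartments are the $G$-translates of a fundamental apartment associated with $N$. In our setting $B$ is the stabilizer of the facet $\sigma$ and $N$ is the setwise stabilizer of $\mathcal{A}$, so the hypothetical BN-building is naturally identified with $(\mathcal{X}, G\mathcal{A})$; the failure of the building axiom above therefore precludes $(B, N)$ from being a BN-pair.

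The proof is short, and the only point requiring any care is selecting the right pair of vertices: two $1$-spaces whose sum is not free. This phenomenon requires $r \geq 2$ and is precisely what distinguishes our complexes from classical spherical buildings over fields, so I do not anticipate a serious obstacle beyond being explicit about the identification of $B$ and $N$ with the chamber and apartment stabilizers.
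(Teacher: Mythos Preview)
Your argument for the building failure is correct and uses exactly the same obstruction as the paper: the lines $\langle e_1\rangle$ and $\langle e_1+\pi e_2\rangle$ cannot appear together in any $g\mathcal{A}$ because $\langle e_1, e_1+\pi e_2\rangle=\langle e_1,\pi e_2\rangle$ is not free of rank $2$. The paper phrases this with a pair of facets rather than a pair of vertices, but the content is identical.

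For the $(B,N)$-pair half you take a different route. The paper verifies directly that the axiom $sBw\subseteq BswB\cup BwB$ fails, by exhibiting $s=w=\left(\begin{smallmatrix}&1&\\1&&\\&&1\end{smallmatrix}\right)$ and $b=\left(\begin{smallmatrix}1&\pi&\\&1&\\&&1\end{smallmatrix}\right)$, and checking that $(sbw)_{2,1}=\pi$ lies neither in $\{0\}$ (as for $B$) nor in $\mathcal{O}_r^\times$ (as for $BwB$). Your approach instead deduces $(B,N)$-failure from the building failure via Tits' correspondence. This is valid, but the phrase ``naturally identified with $(\mathcal{X},G\mathcal{A})$'' hides a step you should make explicit: you need that, under the hypothesis that $(B,N)$ is a BN-pair, the Bruhat decomposition $G=BNB$ holds, so that for any facet $g\sigma$ one has $g=bnb'$ and hence both $\sigma$ and $g\sigma$ lie in $b\mathcal{A}$; this contradicts your two vertices (each contained in some facet) having no common apartment. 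Alternatively one checks that the maximal standard parabolics $\langle B,\,s_j:j\neq i\rangle$ coincide with the vertex stabilizers $\Stab(\langle e_1,\ldots,e_i\rangle)$, which does hold over $\mathcal{O}_r$ but is not entirely automatic. Either way the argument goes through; the paper's direct matrix check is more self-contained, while your route avoids the explicit computation at the cost of invoking the standard BN-pair/building machinery.
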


\begin{proof}
The (B,N)-pair axioms require that $sBw\subseteq BswB\cup BwB$ for
every $s\in S$, $w\in W$. Assume $d=3$, and let $w=s=\left(\begin{smallmatrix} & 1\\
1\\
 &  & 1
\end{smallmatrix}\right)$, $b=\left(\begin{smallmatrix}1 & \pi\\
 & 1\\
 &  & 1
\end{smallmatrix}\right)$. We need $sbw=\left(\begin{smallmatrix}1\\
\pi & 1\\
 &  & 1
\end{smallmatrix}\right)\in B\cup BwB$, but all $g\in B$ have $g_{2,1}=0$, and all $g$ in $BwB=\left(\begin{smallmatrix}\mathcal{O}_{r}^{\times} & * & *\\
 & \mathcal{O}_{r}^{\times} & *\\
 &  & \mathcal{O}_{r}^{\times}
\end{smallmatrix}\right)\left(\begin{smallmatrix} & \mathcal{O}_{r}^{\times} & *\\
\mathcal{O}_{r}^{\times} & * & *\\
 &  & \mathcal{O}_{r}^{\times}
\end{smallmatrix}\right)$ have $g_{2,1}\in\mathcal{O}_{r}^{\times}$. \footnote{Note that the failure of $\mathcal{O}_{r}$ to be a field is precisely
what makes this example work.} The same example works for larger $d$, by placyin $s,w,b$ at the
top-left $3\times3$ block, and $I_{d-3}$ at the bottom-right block.

From the geometric perspective, $\left(\mathcal{X},G\mathcal{A}\right)$
is not a building as $\sigma=\left\{ \left\langle e_{1}\right\rangle ,\left\langle e_{1},e_{2}\right\rangle \right\} $
does not share an apartment with $sbw\sigma=\left\{ \left\langle e_{1}+\pi e_{2}\right\rangle ,\left\langle e_{1},e_{2}\right\rangle \right\} $
(for any $d\geq3$). Indeed, each apartment $g\mathcal{A}$ is induced
by the modules spanned by subsets of the basis $g\mathscr{B}$, and
no basis for $\mathcal{O}_{r}^{d}$ contains both $e_{1}$ and $e_{1}+\pi e_{2}$.
\end{proof}
For $d=2$, the complex $\mathcal{X}$ is a discrete set, which is
trivially a building if one considers all pairs of points as apartments.
However, if one takes only the ``algebraic'' apartments $G\mathcal{A}$,
the building axioms still fail by the same argument as in the proof
above.

\section{\protect\label{sec:Automorphic-ridigity}Automorphic rigidity}

Let $\mathcal{X}=\mathbb{P}_{fr}^{d-1}\left(\mathcal{O}_{r}\right)$
for $\mathcal{O},\mathfrak{p},r,d$ as before. The main goal of this
section is Theorem \ref{thm:aut0X}, which determines the automorphism
group of $\mathcal{X}$ when $d\geq3$ (note that for $d=2$, $\mathcal{X}$
is just a discrete set, so $\Aut(\mathcal{X})\cong S_{|\mathcal{X}|}$
and Theorem \ref{thm:aut0X} does not hold). The proof appears at
the end of the section, after building the necessary machinery. We
begin with a simple lemma:
\begin{lem}
\label{lem:trichotomy}For $V\leq\mathcal{O}_{r}^{d},n<d$, exactly
one of the following holds:
\begin{enumerate}
\item $V$ is not $n$-generated (and is not contained in any $n$-space).
\item $V$ is freely $n$-generated (and is contained in only one $n$-space).
\item $V$ is $n$-generated but not freely, and is contained in more than
one $n$-space.
\end{enumerate}
\end{lem}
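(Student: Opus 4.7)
The plan is to verify the parenthetical claim in each of the three cases, since the three main hypotheses — minimum generator count exceeds $n$, equals $n$ with a free presentation, or is at most $n$ but not free of rank $n$ — are manifestly mutually exclusive and exhaustive for submodules $V\leq\mathcal{O}_{r}^{d}$. I will dispatch case (1) directly from Fact \ref{fact:fact}(7), and handle (2) and (3) simultaneously via the counting formula of Proposition \ref{prop:contains_submodules}(2).

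For (1), if $V\leq W$ for some $n$-space $W$, then $V$ is $n$-generated by Fact \ref{fact:fact}(7); the contrapositive is exactly the claim. For (2) and (3), write $V$ in its SNF-type $(m;k_{1},\ldots,k_{t})$ per Fact \ref{fact:fact}(\ref{enu:equiv-GLd}), so $m+t$ is the minimum number of generators of $V$. When $m+t\leq n$ — which covers both (2) and (3) — Proposition \ref{prop:contains_submodules}(2) gives the number of $n$-spaces containing $V$ as
\[
S_{d-n}^{d-m-t}\cdot q^{(d-n)\sum_{j}k_{j}}.
\]

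Case (2) is precisely the subcase $(m,t)=(n,0)$, for which the formula collapses to $S_{d-n}^{d-n}\cdot q^{0}=1$. Case (3) is the complementary subcase, $m+t\leq n$ with $(m,t)\neq(n,0)$. I would then split it: either some $k_{j}\geq 1$ exists, forcing the factor $q^{(d-n)\sum k_{j}}\geq q\geq2$ (using $d-n\geq1$ from the hypothesis $n<d$); or all $k_{j}=0$, so $V$ is free of rank $m<n$, and then $d-m>d-n\geq1$ makes the first factor $S_{d-n}^{d-m}=\binom{d-m}{d-n}_{q}\cdot q^{(r-1)(d-n)(n-m)}$ a non-trivial $q$-binomial, hence $\geq2$. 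Either way, strictly more than one $n$-space contains $V$.

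There is no real obstacle here — the argument is essentially bookkeeping on top of preliminaries already established. The only mild care needed is verifying that the hypothesis $m+t\leq n$ required for applying Proposition \ref{prop:contains_submodules}(2) is automatic in cases (2) and (3), and then running the two-way case split in (3) to upgrade the count from $\geq1$ to $\geq2$.
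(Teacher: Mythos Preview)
Your proof is correct and follows the route the paper itself flags as an alternative: where the paper's primary argument for case (3) exhibits two explicit $n$-spaces containing $V$, you instead invoke the counting formula of Proposition~\ref{prop:contains_submodules}(2), which the paper mentions parenthetically. One minor wording point: your sub-case ``all $k_j=0$'' is more accurately stated as ``$t=0$'', since by the convention of Fact~\ref{fact:fact}(\ref{enu:equiv-GLd}) the invariants satisfy $0<k_j<r$ whenever present; your argument for that sub-case (free of rank $m<n$, hence $S_{d-n}^{d-m}\geq 2$) goes through unchanged.
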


\begin{proof}
If $V$ is not $n$-generated, it is not contained in any $n$-space
(see Fact \ref{fact:fact}). If $V$ is freely $n$-generated, the
only $n$-space containing it is itself. If $V$ is $n$-generated
but not freely, then it is of type $(m;k_{1},\ldots,k_{n-m})$ for
$m<n$, and we assume by applying some $g\in GL_{d}(\mathcal{O}_{r})$
that $V=\mathcal{O}_{r}^{m}\times\mathfrak{p}^{k_{1}}\times\ldots\times\mathfrak{p}^{k_{n-m}}\times0^{\times(d-n)}$.
As $k_{n-m}>0$, there exist more than one $n$-space containing $V$,
e.g.\ $\left\langle e_{1},\ldots,e_{n}\right\rangle $ and $\left\langle e_{1},\ldots,e_{n-1},e_{n}+\pi^{r-1}e_{n-1}\right\rangle $
(alternatively, by Proposition \ref{prop:contains_submodules}).
\end{proof}
Let us denote by $\overline{\mathcal{X}}$ the flag complex obtained
from all submodules of $\mathcal{O}_{r}^{d}$ including $0$ and $\mathcal{O}_{r}^{d}$,
which is topologically the suspension of $\mathcal{X}$. We observe
that any $\varphi\in\Aut^{0}(\mathcal{X})$ extends naturally to $\overline{\mathcal{X}}$
by $\varphi(0)=0$, $\varphi\left(\mathcal{O}_{r}^{d}\right)=\mathcal{O}_{r}^{d}$. 
\begin{lem}
\label{lem:subsetspan}Let $\varphi\in\Aut^{0}(\mathcal{X})$. If
$V\in\overline{\mathcal{X}}$ and $V=L_{1}+\ldots+L_{k}$ for $L_{i}\in\mathcal{X}_{1}$,
then $\varphi(V)=\varphi(L_{1})+\ldots+\varphi(L_{k})$.
\end{lem}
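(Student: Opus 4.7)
The plan is induction on the minimum number $n$ of generators of $V$; the cases $n=0$ (so $V=0$) and $n=1$ (so $V$ is itself a line, forcing each $L_i=V$) are immediate. In the inductive step, color- and incidence-preservation of $\varphi$ give $\varphi(L_i)\subseteq\varphi(V)$ for every $i$, hence $\sum_i\varphi(L_i)\subseteq\varphi(V)$; only the reverse inclusion requires argument. Using Fact \ref{fact:fact}(4) I first reduce to the case $k=n$ with primitive generators of $L_1,\ldots,L_n$ forming a basis of $V$. Setting $M:=L_2+\ldots+L_n$, a free $(n-1)$-space, the inductive hypothesis gives $\varphi(M)=\sum_{i\geq 2}\varphi(L_i)$, so the remaining task is $\varphi(L_1)+\varphi(M)=\varphi(V)$.

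For $n\leq d-1$ the argument is clean: $V$ is the unique free $n$-space containing both $L_1$ and $M$, since any such must contain $L_1+M=V$ and free $n$-spaces cannot strictly nest. This uniqueness is combinatorial and therefore preserved by $\varphi$, so $\varphi(V)$ is the unique free $n$-space containing $\varphi(L_1)$ and $\varphi(M)$. Since $\varphi(L_1)+\varphi(M)$ is $n$-generated and lies in a unique $n$-space, Lemma \ref{lem:trichotomy} rules out case (3) and forces it to be freely $n$-generated; being a free $n$-space inside the free $n$-space $\varphi(V)$, it must equal $\varphi(V)$.

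The main obstacle is the top case $V=\mathcal{O}_{r}^{d}$, where Lemma \ref{lem:trichotomy} no longer discriminates. Here I plan to pass to the residue $\mathbb{F}_q^d=\mathcal{O}_{r}^{d}/\mathfrak{p}\mathcal{O}_{r}^{d}$, writing $\overline{U}$ for the image of a submodule $U$. If $W:=\varphi(L_1)+\varphi(M)$ were proper in $\mathcal{O}_{r}^{d}$ then Nakayama would give $\overline{W}\subsetneq\mathbb{F}_q^d$; but $\overline{\varphi(M)}$ is a hyperplane (image of the rank-$(d-1)$ free direct summand $\varphi(M)$), so $\overline{\varphi(L_1)}\subseteq\overline{\varphi(M)}$. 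Lifting a primitive generator of $\varphi(L_1)$ then produces a primitive $u\in\varphi(M)$ with $\overline{\langle u\rangle}=\overline{\varphi(L_1)}$, hence a line $L'':=\langle u\rangle\subseteq\varphi(M)$ distinct from $\varphi(L_1)$ (since $\varphi(L_1)\not\subseteq\varphi(M)$, by preservation of the non-incidence $L_1\not\subseteq M$).

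The pullback step rests on the following combinatorial reformulation extracted from Lemma \ref{lem:trichotomy}: for two distinct lines $L\neq L'$ in $\mathcal{O}_{r}^{d}$, the relation $\overline{L}=\overline{L'}$ is equivalent to $L+L'$ not being a free $2$-space, hence to $L$ and $L'$ being contained in more than one free $2$-space --- a condition purely internal to $\mathcal{X}$ and so preserved by $\varphi^{-1}$. Applying $\varphi^{-1}$ to $L''$ and $\varphi(L_1)$ therefore produces a line $\widetilde{L}\subseteq M$, distinct from $L_1$, with $\overline{\widetilde{L}}=\overline{L_1}$; then $\overline{L_1}=\overline{\widetilde{L}}\subseteq\overline{M}$, contradicting the fact that $\overline{L_1},\ldots,\overline{L_d}$ form a basis of $\mathbb{F}_q^d$ (so $\overline{L_1}$ lies outside the hyperplane $\overline{M}=\langle\overline{L_2},\ldots,\overline{L_d}\rangle$). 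I expect this combinatorial-to-algebraic bridge --- detecting the ring-theoretic invariant ``same reduction mod $\mathfrak{p}$'' purely from incidence data in $\mathcal{X}$ --- to be the delicate heart of the argument.
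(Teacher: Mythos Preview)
Your proof is correct and follows essentially the same approach as the paper's: uniqueness of the containing $n$-space combined with Lemma~\ref{lem:trichotomy} for $n<d$, and for $n=d$ the production of a line in $\varphi(M)$ ``close to'' $\varphi(L_1)$, detected combinatorially via the multiple-$2$-space criterion and pulled back through $\varphi^{-1}$ to a contradiction. Your mod-$\mathfrak{p}$ bridge (two distinct lines have the same reduction iff their sum is non-free iff they lie in more than one $2$-space) is precisely the paper's ``nontrivial intersection'' condition rephrased, and your explicit induction via $M=L_2+\ldots+L_n$ corresponds to the paper's appeal to ``the first part of the proof'' when it identifies $\langle w_2,\ldots,w_d\rangle=\varphi(\langle v_2,\ldots,v_d\rangle)$.
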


\begin{proof}
Let $m=\rank V$ ($0\leq m\leq k$). Since $V=L_{1}+\ldots+L_{k}$,
$V$ is the unique vertex in $\mathcal{X}_{m}$ which is adjacent
to all of $L_{1},\ldots,L_{k}$. It follows that $\varphi\left(V\right)$
is the unique vertex in $\mathcal{X}_{m}$ adjacent to $\varphi\left(L_{1}\right),\ldots,\varphi\left(L_{k}\right)$,
hence $\varphi\left(V\right)$ is the unique $m$-space containing
$W:=\varphi\left(L_{1}\right)+\ldots+\varphi\left(L_{k}\right)$.

If $m<d$ then Lemma \ref{lem:trichotomy} shows that $W\in\mathcal{X}_{m}$,
hence $W=\varphi(V)$ as desired. If $m=d$, then $V=\varphi(V)=\mathcal{O}^{d}$
and we need to show $W=\mathcal{O}^{d}$. We choose generating vectors,
$v_{i}$ for $L_{i}$ and $w_{i}$ for $\varphi(L_{i})$, and we can
assume now that $k=d$, throwing away redundant vectors using Lemma
\ref{lem:subsetspan}, so that $v_{1},\ldots,v_{d}$ is a basis. Assume
to the contrary that $W\lneq\mathcal{O}^{d}$, so that $w_{1},\ldots,w_{d}$
are linearly dependent, say $\pi^{j}w_{1}=\sum_{i=2}^{d}c_{i}w_{i}$
with $j<r$. Fixing a primitive $u\in\left\langle w_{2},\ldots,w_{d}\right\rangle $
such that $\sum_{i=2}^{d}c_{i}w_{i}\in\left\langle u\right\rangle $,
we see that $\left\langle w_{1}\right\rangle $ and $\left\langle u\right\rangle $
intersect nontrivially, so $w_{1}+u\notin\mathcal{X}_{2}$ and by
Lemma \ref{lem:trichotomy} there is more than one plane in $\mathcal{X}_{2}$
containing $w_{1},u$. Thus, there is more than one path $\mathcal{X}_{d-1}\rightarrow\mathcal{X}_{1}\rightarrow\mathcal{X}_{2}\rightarrow\mathcal{X}_{1}$
of the form $\left\langle w_{2},\ldots,w_{d}\right\rangle \rightarrow\left\langle u\right\rangle \rightarrow*\rightarrow\left\langle w_{1}\right\rangle $
(note $\left\langle w_{2},\ldots,w_{d}\right\rangle =\varphi(\left\langle v_{2},\ldots,v_{d}\right\rangle )\in\mathcal{X}_{d-1}$
by the first part of the proof). Applying $\varphi^{-1}$ to each
such path gives $\left\langle v_{2},\ldots,v_{d}\right\rangle \rightarrow\varphi^{-1}\left(\left\langle u\right\rangle \right)\rightarrow*\rightarrow\left\langle v_{1}\right\rangle $,
but there is only one path of this form since $v_{1},\ldots,v_{d}$
are independent, and we are done.
\end{proof}
Two immediate useful corollaries are:
\begin{cor}
\label{cor:Xn-determines}For $\varphi\in\Aut^{0}\left(\mathcal{X}\right)$,
\begin{enumerate}
\item $\varphi$ is determined by $\varphi\big|_{\mathcal{X}_{n}}$ for
any $1\leq n\leq d-1$.
\item If $v_{1},\ldots,v_{k}$ are linearly independent and $\varphi(\left\langle v_{j}\right\rangle )=\left\langle w_{j}\right\rangle $,
then $w_{1},\ldots,w_{k}$ are linearly independent.
\end{enumerate}
\end{cor}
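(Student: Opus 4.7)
The plan is to derive both statements from Lemma~\ref{lem:subsetspan}. Part~(2) will be immediate, while part~(1) reduces to a single combinatorial identity about $n$-spaces containing a line.

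For part~(2), since $v_1,\ldots,v_k$ are linearly independent, $V := \langle v_1\rangle + \ldots + \langle v_k\rangle$ is a free $k$-space when $k < d$ and equals $\mathcal{O}_r^d$ when $k = d$; in either case $V \in \overline{\mathcal{X}}$. Applying Lemma~\ref{lem:subsetspan} gives $\varphi(V) = \langle w_1\rangle + \ldots + \langle w_k\rangle$. As $\varphi$ preserves colors on $\overline{\mathcal{X}}$ (with the extensions $\varphi(0)=0$, $\varphi(\mathcal{O}_r^d)=\mathcal{O}_r^d$), the right-hand side has rank $k$, so $w_1,\ldots,w_k$ generate a free $k$-module on $k$ generators and are therefore linearly independent.

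For part~(1), I would reduce to showing that $\varphi|_{\mathcal{X}_n}$ determines $\varphi|_{\mathcal{X}_1}$; Lemma~\ref{lem:subsetspan} then reconstructs $\varphi$ on all of $\overline{\mathcal{X}}$. The case $n=1$ is immediate, so assume $2 \leq n \leq d-1$. The key identity I would establish is
\[
L \;=\; \bigcap \bigl\{W \in \mathcal{X}_n : L \subseteq W \bigr\} \qquad\text{for every } L \in \mathcal{X}_1.
\]
Since $\varphi$ preserves inclusion, $\{W' \in \mathcal{X}_n : \varphi(L) \subseteq W'\}$ coincides with $\{\varphi(W) : W \in \mathcal{X}_n,\ L \subseteq W\}$, which depends only on $L$ and $\varphi|_{\mathcal{X}_n}$. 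Applying the identity to $\varphi(L)$ then pins $\varphi(L)$ down as the intersection of this family, completing the reduction.

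The only genuinely nontrivial step is the intersection identity, and it is the expected main (albeit mild) obstacle. To prove it, I would extend a primitive generator $v$ of $L$ to a basis $v,e_2,\ldots,e_d$ of $\mathcal{O}_r^d$ and consider the $n$-spaces $W_I := \langle v\rangle \oplus \langle e_i : i \in I\rangle$ for $(n-1)$-subsets $I \subseteq \{2,\ldots,d\}$. Because $n-1 \leq d-2$, for each $k \in \{2,\ldots,d\}$ one can pick such an $I_k$ inside $\{2,\ldots,d\}\setminus\{k\}$; then $\bigcap_k I_k = \emptyset$ forces $\bigcap_k W_{I_k} = \langle v\rangle = L$. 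This is the one place the bound $n \leq d-1$ enters, and once the identity is in hand the rest of the argument is formal.
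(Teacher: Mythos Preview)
Your proof is correct and follows essentially the same approach as the paper: both reduce to showing that $\varphi|_{\mathcal{X}_n}$ determines $\varphi|_{\mathcal{X}_1}$ via Lemma~\ref{lem:subsetspan}, and both identify $L$ from the family of $n$-spaces containing it. The paper phrases this last step as ``$L$ is the unique $\mathcal{X}_1$-neighbor of $\{V\in\mathcal{X}_n\mid L\leq V\}$'' without further justification, whereas you prove the equivalent intersection identity $L=\bigcap\{W\in\mathcal{X}_n:L\subseteq W\}$ explicitly by exhibiting enough coordinate $n$-spaces; this is a welcome elaboration rather than a different route.
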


\begin{proof}
(1) If follows from the theorem that $\varphi|_{\mathcal{X}_{1}}$
determines $\varphi$. In addition, $\varphi|_{\mathcal{X}_{n}}$
determines $\varphi|_{\mathcal{X}_{1}}$: $L\in\mathcal{X}_{1}$ is
the unique neighbor of all of $\left\{ V\in\mathcal{X}_{n}\,\middle|\,L\leq V\right\} $,
which implies that $\varphi\left(L\right)$ is the unique neighbor
of $\left\{ \varphi\left(V\right)\,\middle|\,L\leq V\in\mathcal{X}_{n}\right\} $.
(2) is immediate.
\end{proof}
From now we fix $\varphi\in\Aut^{0}(\mathcal{X})$, aiming to show
it is of the form $\varphi_{A,\tau}$ for some $A,\tau$. We denote
by $e_{1},\ldots,e_{d}$ the standard basis of $\mathcal{O}_{r}^{d}$.
\begin{lem}
\label{lem:wj-def}For $w_{1}\in\mathcal{O}_{r}^{d}$ such that $\varphi\left(\langle e_{1}\rangle\right)=\left\langle w_{1}\right\rangle $,
there exist unique $w_{2},\ldots,w_{d}$ such that $w_{1},\ldots,w_{d}$
is a basis satisfying 
\begin{equation}
\varphi\left(\left\langle e_{j}\right\rangle \right)=\left\langle w_{j}\right\rangle \text{ and }\varphi\left(\left\langle e_{1}+e_{j}\right\rangle \right)=\left\langle w_{1}+w_{j}\right\rangle \quad\left(2\leq j\leq d\right).\label{eq:e1+ej-w1+wj}
\end{equation}
\end{lem}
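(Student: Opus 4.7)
The plan is to first fix arbitrary primitive generators $w_j'$ for the lines $\varphi(\langle e_j\rangle)$ for $2\leq j\leq d$, then rescale them to satisfy the addition condition. By Corollary \ref{cor:Xn-determines}(2), since $e_1,\ldots,e_d$ are linearly independent, the vectors $w_1,w_2',\ldots,w_d'$ form a basis of $\mathcal{O}_r^d$. For each $j\geq 2$, I would apply Lemma \ref{lem:subsetspan} to the free submodule $\langle e_1,e_j\rangle\in\overline{\mathcal{X}}$ to obtain $\varphi(\langle e_1,e_j\rangle)=\langle w_1,w_j'\rangle$. Since $\langle e_1+e_j\rangle\subset\langle e_1,e_j\rangle$, the image $\varphi(\langle e_1+e_j\rangle)$ is a $1$-space inside $\langle w_1,w_j'\rangle$, so we may write it as $\langle \alpha w_1+\beta w_j'\rangle$ with $\alpha w_1+\beta w_j'$ primitive.

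The central step is to show that both $\alpha$ and $\beta$ are units. For this I would exploit the two distinct decompositions
\[
\langle e_1,e_j\rangle=\langle e_1\rangle+\langle e_1+e_j\rangle=\langle e_j\rangle+\langle e_1+e_j\rangle,
\]
each of which is a sum of $1$-spaces in $\mathcal{X}_1$. Applying Lemma \ref{lem:subsetspan} to both decompositions yields
\[
\langle w_1,w_j'\rangle=\langle w_1\rangle+\langle\alpha w_1+\beta w_j'\rangle=\langle w_1,\beta w_j'\rangle
\]
and likewise $\langle w_1,w_j'\rangle=\langle w_j',\alpha w_1\rangle$. The first equality forces $\beta\in\mathcal{O}_r^\times$ (otherwise $\beta w_j'\in\mathfrak{p}\cdot w_j'$ and the sum has type $(1;\ldots)$, not a $2$-space, contradicting Fact \ref{fact:fact}), and symmetrically the second forces $\alpha\in\mathcal{O}_r^\times$.

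With both coefficients units, dividing the generator by $\alpha$ gives $\varphi(\langle e_1+e_j\rangle)=\langle w_1+\beta\alpha^{-1}w_j'\rangle$, so setting $w_j:=\beta\alpha^{-1}w_j'$ — a unit rescaling — yields simultaneously $\langle w_j\rangle=\langle w_j'\rangle=\varphi(\langle e_j\rangle)$ and $\varphi(\langle e_1+e_j\rangle)=\langle w_1+w_j\rangle$. Because each $w_j$ is a unit multiple of $w_j'$, the tuple $w_1,\ldots,w_d$ is still a basis.

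Finally, uniqueness follows from linear independence: if $\widetilde{w}_j$ also satisfies \eqref{eq:e1+ej-w1+wj}, then $\widetilde{w}_j=u w_j$ for some unit $u$ and $w_1+\widetilde{w}_j=v(w_1+w_j)$ for some unit $v$, giving $(1-v)w_1=(v-u)w_j$; since $w_1,w_j$ are independent we conclude $v=1$ and $u=1$, hence $\widetilde{w}_j=w_j$. The only real obstacle in the argument is the unit-coefficient step, and its resolution relies precisely on the fact, peculiar to the non-field setting, that the failure of $\beta$ (or $\alpha$) to be a unit would prevent $\langle w_1\rangle+\langle \alpha w_1+\beta w_j'\rangle$ from being a genuine $2$-space in $\mathcal{X}$.
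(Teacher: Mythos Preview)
Your proof is correct, and it differs from the paper's in one substantive point: the argument that $\alpha$ and $\beta$ are units. The paper argues by contradiction via neighbour counts: if $\mu\in\mathfrak p$ then $\langle w_1\rangle$ and $\varphi(\langle e_1+e_j\rangle)$ share a nontrivial intersection, so by Lemma~\ref{lem:trichotomy} they have more than one common $\mathcal X_2$-neighbour, whereas $\langle e_1\rangle$ and $\langle e_1+e_j\rangle$ have only one. You instead reapply Lemma~\ref{lem:subsetspan} to the alternative decompositions $\langle e_1,e_j\rangle=\langle e_1\rangle+\langle e_1+e_j\rangle=\langle e_j\rangle+\langle e_1+e_j\rangle$, obtaining $\langle w_1,w_j'\rangle=\langle w_1,\beta w_j'\rangle=\langle \alpha w_1,w_j'\rangle$, and then use the independence of $w_1,w_j'$ (from Corollary~\ref{cor:Xn-determines}(2)) to force $\alpha,\beta\in\mathcal O_r^\times$. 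This is a clean and arguably more direct route, since it stays entirely within the ``sum of lines'' framework of Lemma~\ref{lem:subsetspan} and avoids the detour through Lemma~\ref{lem:trichotomy}. A minor organisational difference is that you invoke Corollary~\ref{cor:Xn-determines}(2) up front to secure independence, while the paper defers the basis assertion to the end; both are fine since the corollary precedes this lemma.
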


\begin{proof}
For $j\geq2$, let $u_{j}$ be a generator for $\varphi\left(\left\langle e_{j}\right\rangle \right)$.
We note that $e_{1}+e_{j}$ is primitive, and $\left\langle e_{1},e_{j}\right\rangle $
is the unique neighbor in $\mathcal{X}_{2}$ of any two among $\left\langle e_{1}\right\rangle ,\left\langle e_{j}\right\rangle $
and $\left\langle e_{1}+e_{j}\right\rangle $. By Lemma \ref{lem:subsetspan}
we have $\varphi(\left\langle e_{1},e_{j}\right\rangle )=\left\langle w_{1},u_{j}\right\rangle $,
which is thus the unique neighbor in $\mathcal{X}_{2}$ of any two
among $\left\langle w_{1}\right\rangle ,\left\langle u_{j}\right\rangle $,
and $\varphi(\left\langle e_{1}+e_{j}\right\rangle )$. It follows
that $\varphi\left(\langle e_{1}+e_{j}\rangle\right)=\langle\lambda w_{1}+\mu u_{j}\rangle$
for some scalars $\lambda,\mu$, and both of these are in $\mathcal{O}_{r}^{\times}$:
if $\mu\in\mathfrak{p}$ then $\lambda\in\mathcal{O}^{\times}$ by
primitivity of $\lambda w_{1}+\mu u_{j}$, and it follows that $0\neq\mathfrak{p}^{r-1}w_{1}\leq\left\langle \lambda w_{1}+\mu u_{j}\right\rangle \cap\left\langle w_{1}\right\rangle $.
This implies that $\varphi(\left\langle e_{1}+e_{j}\right\rangle )$
and $\left\langle w_{1}\right\rangle $ and have more than one neighbor
in $\mathcal{X}_{2}$, which is false, so that $\mu\in\mathcal{O}^{\times}$.
A similar argument shows that $\lambda\in\mathcal{O}^{\times}$. Thus,
$w_{j}=\frac{\mu}{\lambda}u_{j}$ satisfies (\ref{eq:e1+ej-w1+wj}),
and it is unique since $\left\langle w_{1}+\lambda w_{j}\right\rangle \neq\left\langle w_{1}+w_{j}\right\rangle $
for any $\lambda\neq1$. Finally, $w_{1},\ldots,w_{d}$ is a basis
by Lemma \ref{lem:subsetspan}.
\end{proof}
We fix the $w_{1},\ldots,w_{d}$ which were obtained in the last Lemma.
\begin{lem}
\label{lem:tj-def}For every $j$ there exists a unique permutation
$\tau_{j}$ on $\mathcal{O}_{r}$ such that $\varphi\left(\left\langle e_{1}+\xi e_{j}\right\rangle \right)=\left\langle w_{1}+\tau_{j}\left(\xi\right)w_{j}\right\rangle $
for any $\xi\in\mathcal{O}_{r}$.
\end{lem}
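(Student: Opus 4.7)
The plan is to use Lemma~\ref{lem:subsetspan} twice, with two different decompositions of the 2-space $\langle e_1, e_j\rangle$, to pin down the image $\varphi(\langle e_1 + \xi e_j\rangle)$ as a line of the form $\langle w_1 + \mu w_j\rangle$. Once this canonical form is established, defining $\tau_j(\xi) := \mu$ is immediate, and bijectivity follows by applying the same analysis to $\varphi^{-1}$.

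First, since $e_1 + \xi e_j$ has first coordinate $1 \in \mathcal{O}_r^{\times}$, it is primitive for every $\xi \in \mathcal{O}_r$, so $\langle e_1 + \xi e_j\rangle \in \mathcal{X}_1$. Writing $\langle e_1, e_j\rangle = \langle e_1\rangle + \langle e_j\rangle$ and invoking Lemma~\ref{lem:subsetspan} gives $\varphi(\langle e_1, e_j\rangle) = \langle w_1, w_j\rangle$. Since $\langle e_1 + \xi e_j\rangle \subseteq \langle e_1, e_j\rangle$, its image lies in $\langle w_1, w_j\rangle$, so
\[
\varphi(\langle e_1 + \xi e_j\rangle) = \langle \alpha w_1 + \beta w_j\rangle
\]
for some primitive pair $(\alpha,\beta) \in \mathcal{O}_r^2$.

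The key step is to show $\alpha \in \mathcal{O}_r^{\times}$. For this I would apply Lemma~\ref{lem:subsetspan} a second time, now to the alternative decomposition $\langle e_1, e_j\rangle = \langle e_j\rangle + \langle e_1 + \xi e_j\rangle$, which holds for every $\xi$ since $e_1 = (e_1 + \xi e_j) - \xi e_j$. This yields
\[
\langle w_1, w_j\rangle = \langle w_j\rangle + \langle \alpha w_1 + \beta w_j\rangle = \langle w_j, \alpha w_1\rangle.
\]
If $\alpha \in \mathfrak{p}$, then $w_1$ would not lie in $\langle w_j, \alpha w_1\rangle$ (using that $w_1, \ldots, w_d$ is a basis, as supplied by Lemma~\ref{lem:wj-def}), contradicting the equality. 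Hence $\alpha$ is a unit and we may rescale to get $\varphi(\langle e_1 + \xi e_j\rangle) = \langle w_1 + (\beta/\alpha) w_j\rangle$. Setting $\tau_j(\xi) := \beta/\alpha$, uniqueness of this scalar follows because $\langle w_1 + \mu w_j\rangle = \langle w_1 + \mu' w_j\rangle$ forces $w_1 + \mu w_j = \lambda(w_1 + \mu' w_j)$ for some unit $\lambda$, and linear independence of $w_1, w_j$ then gives $\lambda = 1$ and $\mu = \mu'$.

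It remains to verify that $\tau_j$ is a permutation of $\mathcal{O}_r$. The entire argument above applies verbatim to $\varphi^{-1} \in \Aut^0(\mathcal{X})$ with the roles of $(e_i)$ and $(w_i)$ exchanged: the hypotheses of Lemma~\ref{lem:wj-def} for $\varphi^{-1}$ hold because $\varphi^{-1}(\langle w_1\rangle) = \langle e_1\rangle$, $\varphi^{-1}(\langle w_j\rangle) = \langle e_j\rangle$, $\varphi^{-1}(\langle w_1 + w_j\rangle) = \langle e_1 + e_j\rangle$, and $(e_i)$ is a basis. We therefore obtain a map $\sigma_j : \mathcal{O}_r \to \mathcal{O}_r$ with $\varphi^{-1}(\langle w_1 + \mu w_j\rangle) = \langle e_1 + \sigma_j(\mu) e_j\rangle$, and composing with $\tau_j$ and using the uniqueness in the previous step shows $\sigma_j \circ \tau_j = \tau_j \circ \sigma_j = \mathrm{id}_{\mathcal{O}_r}$. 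The only genuinely delicate point is the uniformity of the $\alpha$-is-a-unit argument across $\xi \in \mathcal{O}_r^{\times}$ and $\xi \in \mathfrak{p}$, which is handled precisely by using the decomposition through $\langle e_j\rangle$ rather than through $\langle e_1\rangle$.
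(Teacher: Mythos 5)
Your proof is correct and follows essentially the same route as the paper: locate the image inside $\langle w_1,w_j\rangle$ via Lemma \ref{lem:subsetspan}, then exploit the pair $\langle e_j\rangle$, $\langle e_1+\xi e_j\rangle$ (which, as you note, works for all $\xi$, unlike the pair involving $\langle e_1\rangle$) to force the $w_1$-coefficient to be a unit. The only cosmetic differences are that the paper derives the unit-ness from the uniqueness of the common $\mathcal{X}_2$-neighbor of the two lines (via Lemma \ref{lem:trichotomy}) rather than from a second application of Lemma \ref{lem:subsetspan}, and concludes that $\tau_j$ is a permutation from injectivity of $\varphi$ together with finiteness of $\mathcal{O}_r$, rather than by rerunning the construction for $\varphi^{-1}$.
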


\begin{proof}
We have $\varphi\left(\left\langle e_{1}+\xi e_{j}\right\rangle \right)=\left\langle \lambda w_{1}+\mu w_{j}\right\rangle $
for some $\lambda,\mu\in\mathcal{O}_{r}$ by the same argument as
in the previous proof. Even if $\xi\notin\mathcal{O}_{r}^{\times}$,
we still have that $\left\langle e_{1},e_{j}\right\rangle $ is the
unique $\mathcal{X}_{2}$-neighbor of $\left\langle e_{1}+\xi e_{j}\right\rangle $
and $\left\langle e_{j}\right\rangle $, hence $\left\langle \lambda w_{1}+\mu w_{j}\right\rangle $
and $\left\langle w_{j}\right\rangle $ have a unique $\mathcal{X}_{2}$-neighbor,
which implies $\lambda\in\mathcal{O}_{r}^{\times}$ (but we may have
$\mu\in\mathfrak{p}$). To fulfill both $\varphi\left(\left\langle e_{1}+\xi e_{j}\right\rangle \right)=\left\langle w_{1}+\tau_{j}\left(\xi\right)w_{j}\right\rangle $
and (\ref{eq:e1+ej-w1+wj}) we must define $\tau_{j}(\xi)=\mu/\lambda$;
$\tau_{j}$ is injective, since $\xi'\neq\xi$ implies $\left\langle e_{1}+\xi e_{j}\right\rangle \neq\left\langle e_{1}+\xi'e_{j}\right\rangle $
which forces $\left\langle w_{1}+\tau_{j}\left(\xi\right)w_{j}\right\rangle \neq\left\langle w_{1}+\tau_{j}\left(\xi'\right)w_{j}\right\rangle $
by injectivity of $\varphi$. As $\mathcal{O}_{r}$ is finite, we
are done.
\end{proof}
We shall later see that $\tau_{j}$ are the same for all $j$, and
are in $\Aut_{Ring}\left(\mathcal{O}_{r}\right)$. It is already clear
from (\ref{eq:e1+ej-w1+wj}) that we have
\begin{equation}
\tau_{j}(0)=0\text{ and }\tau_{j}(1)=1\quad\left(2\leq j\leq d\right).\label{eq:t1t0}
\end{equation}

\begin{lem}
\label{lem:semi-linear}With the previous notations, we have
\begin{enumerate}
\item For any $\lambda_{2},\ldots,\lambda_{d}$, $\varphi\left(\langle\left(1,\lambda_{2},\ldots,\lambda_{d}\right)\rangle\right)=\left\langle w_{1}+\sum_{i=2}^{d}\tau_{i}(\lambda_{i})w_{i}\right\rangle $.
\item If some $\lambda_{i}$ is invertible, then $\varphi\left(\langle\left(0,\lambda_{2},\ldots,\lambda_{d}\right)\rangle\right)=\left\langle \sum_{i=2}^{d}\tau_{i}(\lambda_{i})w_{i}\right\rangle $
\end{enumerate}
\end{lem}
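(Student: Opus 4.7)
The plan is to prove (1) by induction on the size of the support $J=\{i\geq 2 : \lambda_i\neq 0\}$, and then derive (2) by comparing $\langle v\rangle$ to $\langle e_1+v\rangle$ inside a $2$-space. The base cases $|J|\leq 1$ are immediate from (\ref{eq:e1+ej-w1+wj}), the definition of $\tau_j$ in Lemma \ref{lem:tj-def}, and $\tau_j(0)=0$.

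For the inductive step with $|J|\geq 2$, I would pick two distinct indices $j_0,j_1\in J$ and consider the $2$-spaces $Q_\ell:=\langle v,e_{j_\ell}\rangle=\langle v-\lambda_{j_\ell}e_{j_\ell},\,e_{j_\ell}\rangle$. Each $Q_\ell$ is free of rank $2$ since the first coordinate of $v$ is a unit while $e_{j_\ell}$ has first coordinate $0$, so they are vertices of $\mathcal{X}_2$. The inductive hypothesis applies to $v-\lambda_{j_\ell}e_{j_\ell}$ (whose support is $J\setminus\{j_\ell\}$), and Lemma \ref{lem:subsetspan} then identifies $\varphi(Q_\ell)$ as $\langle w_1+\sum_{i\in J\setminus\{j_\ell\}}\tau_i(\lambda_i)w_i,\,w_{j_\ell}\rangle$. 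Since $\langle v\rangle$ is contained in both $Q_0$ and $Q_1$, its image lies in $\varphi(Q_0)\cap\varphi(Q_1)$; a direct computation in the basis $w_1,\ldots,w_d$ shows this intersection collapses to the single $1$-space $\langle w_1+\sum_{j\in J}\tau_j(\lambda_j)w_j\rangle$, forcing $\varphi(\langle v\rangle)$ to equal it.

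For (2) I first need that each $\tau_j$ sends $\mathcal{O}_r^\times$ to itself. This follows from the observation that for $\xi\in\mathcal{O}_r^\times$ one has $\langle e_1,e_1+\xi e_j\rangle=\langle e_1,e_j\rangle$, so applying $\varphi$ via Lemma \ref{lem:subsetspan} gives $\langle w_1,w_1+\tau_j(\xi)w_j\rangle=\langle w_1,w_j\rangle$, which forces $\tau_j(\xi)\in\mathcal{O}_r^\times$. Given this, let $v=(0,\lambda_2,\ldots,\lambda_d)$ with some $\lambda_i$ a unit, and consider the $2$-space $P=\langle e_1,v\rangle=\langle e_1,e_1+v\rangle$ together with the hyperplane $H=\langle e_2,\ldots,e_d\rangle$, both of which contain $\langle v\rangle$. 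Lemma \ref{lem:subsetspan} combined with part (1) yields $\varphi(P)=\langle w_1,\,\sum_i\tau_i(\lambda_i)w_i\rangle$ and $\varphi(H)=\langle w_2,\ldots,w_d\rangle$; their intersection is the single $1$-space $\langle \sum_i\tau_i(\lambda_i)w_i\rangle$, which is primitive precisely because $\tau_i$ preserves units, so this must be $\varphi(\langle v\rangle)$.

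The main obstacle I expect is that sums and intersections of free submodules over $\mathcal{O}_r$ need not be free, so Lemma \ref{lem:subsetspan} cannot be invoked blindly and intersections can in principle contain submodules of higher generator count than one expects. The strategy above sidesteps this by choosing every witness ($v$, $v-\lambda_{j_\ell}e_{j_\ell}$, $e_1+v$) so that its first coordinate is a unit, which makes the $2$-spaces they span manifestly free, and by arranging matters so that the key pairwise intersections of images collapse to $1$-spaces already belonging to $\mathcal{X}_1$, leaving no room for $\varphi(\langle v\rangle)$ other than the candidate line.
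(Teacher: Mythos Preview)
Your proof is correct, and Part~(2) is essentially the paper's own argument (intersecting the images of the $2$-space $\langle e_1,e_1+v\rangle$ and the hyperplane $\langle e_2,\ldots,e_d\rangle$). Your extra verification that $\tau_j$ preserves units is harmless but in fact redundant: since $\varphi(P)=\langle w_1,w_1+u\rangle$ with $u=\sum_{i\geq 2}\tau_i(\lambda_i)w_i$ must be a $2$-space (as $P$ is), primitivity of $u$ follows automatically.

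Part~(1), however, genuinely differs from the paper. The paper inducts on $k=\max\{i:\lambda_i\neq 0\}$, drops only the top coordinate, and then uses \emph{one} $2$-space $\langle(1,\lambda_2,\ldots,\lambda_{k-1},0,\ldots,0),e_k\rangle$ together with the $(k-1)$-space $\langle e_1+\lambda_k e_k,e_2,\ldots,e_{k-1}\rangle$; this forces an intermediate argument that the coefficient $\mu_1$ of the first generator is a unit (via the ``unique $\mathcal{X}_2$-neighbor'' trick). Your approach instead drops \emph{two} coordinates from the support, intersects the two resulting $2$-space images directly, and reads off the answer from the $w_1$-coefficient being $1$. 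This is cleaner: it bypasses the invertibility argument entirely and keeps all witnesses inside $\mathcal{X}_2$, at the minor cost of requiring $|J|\geq 2$ rather than $k\geq 3$ for the inductive step (which is equivalent once the base cases are set up). Both approaches ultimately rely on the same engine, Lemma~\ref{lem:subsetspan}, to identify the images of the auxiliary free modules.
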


\begin{proof}
\emph{(1) }We proceed by induction on $k=\max\left\{ 1\leq i\leq d\,\middle|\,\lambda_{i}\neq0\right\} $
where $\lambda_{1}=1$; $k=1$ holds by definition, and $k=2$ by
Lemma \ref{lem:tj-def}. For general $k\geq3$, we write $\left(1,\lambda_{2},\ldots,\lambda_{d}\right)=\left(1,\lambda_{2},\ldots,\lambda_{k-1},0,\ldots,0\right)+\lambda_{k}e_{k}$,
and observe that $\varphi\left(\langle\left(1,\lambda_{2},\ldots,\lambda_{d}\right)\rangle\right)$
is contained in $\varphi\left(\langle\left(1,\lambda_{2},\ldots,\lambda_{k-1},0,\ldots,0\right),e_{k}\rangle\right)$,
which equals $\left\langle w_{1}+\sum_{i=2}^{k-1}\tau_{i}(\lambda_{i})w_{i},w_{k}\right\rangle $
by the induction hypothesis and Lemma \ref{lem:subsetspan}. Thus
we can write $\varphi\left(\langle\left(1,\lambda_{2},\ldots\lambda_{d}\right)\rangle\right)=\left\langle \mu_{1}\left(w_{1}+\sum_{i=2}^{k-1}\tau_{i}(\lambda_{i})w_{i}\right)+\mu_{2}w_{k}\right\rangle $
for some $\mu_{1},\mu_{2}\in\mathcal{O}_{r}$, and $\mu_{1}\in\mathcal{O}^{\times}$
as before, for otherwise $\left\langle \mu_{1}\left(w_{1}+\sum_{i=2}^{k-1}\tau_{i}(\lambda_{i})w_{i}\right)\right\rangle $
and $\left\langle w_{k}\right\rangle $ would have more than one neighbor
in $\mathcal{X}_{2}$, whereas $\langle\left(1,\lambda_{2},\ldots,\lambda_{k-1},0,\ldots,0\right)\rangle$
and $\left\langle e_{k}\right\rangle $ do not. Finally, we observe
that $\langle\left(1,\lambda_{2},\ldots\lambda_{d}\right)\rangle\leq\left\langle e_{1}+\lambda_{k}e_{k},e_{2},\ldots,e_{k-1}\right\rangle $
and applying $\varphi$ we obtain that $\left\langle w_{1}+\sum_{i=2}^{k-1}\tau_{i}(\lambda_{i})w_{i}+\frac{\mu_{2}}{\mu_{1}}w_{k}\right\rangle \leq\left\langle w_{1}+\tau_{k}\left(\lambda_{k}\right)w_{k},w_{2},\ldots,w_{k-1}\right\rangle $,
which holds only if $\mu_{2}/\mu_{1}=\tau_{k}(\lambda_{k})$ as claimed.

\emph{(2)} We have $\left\langle \left(0,\lambda_{2},\ldots,\lambda_{d}\right)\right\rangle \leq\left\langle e_{1},\left(1,\lambda_{2},\ldots,\lambda_{d}\right)\right\rangle $
so that $\varphi\left(\left\langle \left(0,\lambda_{2},\ldots,\lambda_{d}\right)\right\rangle \right)\leq\left\langle w_{1},w_{1}+\sum_{i=2}^{d}\tau_{i}(\lambda_{i})w_{i}\right\rangle $
(as always using Lemma \ref{lem:subsetspan}). In addition $\varphi\left(\left\langle \left(0,\lambda_{2},\ldots,\lambda_{d}\right)\right\rangle \right)\leq\left\langle w_{2},\ldots,w_{d}\right\rangle $,
and the intersection of the last two inclusions is precisely $\left\langle \sum_{i=2}^{d}\tau_{i}(\lambda_{i})w_{i}\right\rangle $.
\end{proof}
\begin{prop}
If $d\geq3$ then $\tau_{2},\ldots,\tau_{d}$ are identical ring automorphisms
of $\mathcal{O}_{r}$.
\end{prop}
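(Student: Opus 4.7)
The plan is to carry out the proof in two steps: first, show that all the permutations $\tau_2,\ldots,\tau_d$ coincide (call the common value $\tau$); second, show that $\tau$ is a ring automorphism. Both steps follow the same template --- pick a free $2$-space whose two generators differ by a scalar multiple of a familiar vector, compute the image of that difference in two different ways, and compare --- so the whole proof is really a repeated application of Lemmas \ref{lem:subsetspan} and \ref{lem:semi-linear}.

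For the first step, fix $2 \leq i \neq j \leq d$ (possible since $d \geq 3$) and exploit the fact that $\langle e_1, e_i + e_j \rangle$ is a free $2$-space. By Lemma \ref{lem:subsetspan} together with Lemma \ref{lem:semi-linear}(2) (which gives $\varphi(\langle e_i + e_j\rangle) = \langle w_i + w_j\rangle$), its $\varphi$-image is $\langle w_1, w_i + w_j \rangle$. On the other hand, Lemma \ref{lem:semi-linear}(1) applied to the $1$-space $\langle e_1 + \xi(e_i + e_j) \rangle$ (which lies inside the above $2$-space) yields the image $\langle w_1 + \tau_i(\xi) w_i + \tau_j(\xi) w_j \rangle$. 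Expressing this $1$-space as an element of $\langle w_1, w_i + w_j \rangle$ and using the linear independence of $w_1, w_i, w_j$ immediately forces $\tau_i(\xi) = \tau_j(\xi)$.

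For the second step, to prove additivity I would use the $2$-space $\langle e_1 + (\xi + \eta) e_2, e_1 + \xi e_2 + \eta e_3 \rangle$, which is free when $\eta \in \mathcal{O}_r^\times$ since the relevant $2\times 2$ minor equals $\eta$. Its generators' difference is $\eta(e_2 - e_3)$, so $\langle e_2 - e_3\rangle$ lies inside the $2$-space; comparing its image $\langle w_2 + \tau(-1)w_3\rangle$ (Lemma \ref{lem:semi-linear}(2)) with a linear combination of $\varphi$ of the two generators yields $\tau(\xi + \eta) = \tau(\xi) - \tau(\eta)/\tau(-1)$. Specializing at $\xi = 0$ and using invertibility of $\tau(\eta)$ (since $\tau$ preserves units, an elementary consequence of Lemma \ref{lem:trichotomy}) forces $\tau(-1) = -1$, and hence additivity holds for all invertible $\eta$. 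For $\eta \in \mathfrak{p}$ rewrite $\xi + \eta = (\xi - 1) + (\eta + 1)$ and note $\eta + 1 \in 1 + \mathfrak{p} \subseteq \mathcal{O}_r^\times$, so additivity for invertible second argument applies. Multiplicativity is analogous: the $2$-space $\langle e_1 + e_i + \xi e_j, e_1 + \eta e_i + \xi\eta e_j \rangle$ is free when $\eta - 1 \in \mathcal{O}_r^\times$, its generators' difference is $(1 - \eta)(e_i + \xi e_j)$, and the same coefficient matching gives $\tau(\xi\eta) = \tau(\xi)\tau(\eta)$ for $\eta - 1$ invertible. One extends to $\eta \in 1 + \mathfrak{p}$ by writing $\xi\eta = \xi + \xi(\eta - 1)$ and observing that $(\eta - 1) - 1 \in -1 + \mathfrak{p} \subseteq \mathcal{O}_r^\times$, so the already-established multiplicative formula applies to $\xi(\eta - 1)$, and a two-line computation (using additivity) finishes.

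The main obstacle is the ubiquity of non-units in $\mathcal{O}_r$: when $r = 1$ every nonzero element is invertible and the $2$-spaces in question are unconditionally free, so the whole proof is essentially one uniform computation. For $r \geq 2$ each of these $2$-spaces is free only on a generic stratum (e.g.\ $\eta$ or $\eta - 1$ invertible), and the bulk of the work lies in bootstrapping additivity and multiplicativity from this stratum to the degenerate one. Crucially, this bootstrapping relies on $\mathfrak{p}$ being the unique maximal ideal, so that $1 + \mathfrak{p} \subseteq \mathcal{O}_r^\times$ --- this is what makes the arithmetic rewrites $(\xi - 1) + (\eta + 1)$ and $\xi + \xi(\eta - 1)$ fall into the generic regime of the already-proven formulas.
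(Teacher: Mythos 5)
Your proof is correct, but it takes a genuinely different route from the paper's. The paper's key device is to insert an auxiliary coordinate carrying a unit: from $\langle e_1+(\mu+\lambda)e_j+e_k\rangle \le \langle e_1+\mu e_j,\ \lambda e_j+e_k\rangle$ it reads off additivity of $\tau_j$ for \emph{all} $\lambda,\mu$ at once (the vector $\lambda e_j+e_k$ is primitive and the ambient $2$-space is free regardless of whether $\lambda$ is a unit), and from $\langle e_1+\mu\lambda e_j+\lambda e_k\rangle\le\langle e_1,\ \mu e_j+e_k\rangle$ it gets $\tau_j(\mu\lambda)=\tau_k(\lambda)\tau_j(\mu)$ unconditionally, which yields both $\tau_j=\tau_k$ (set $\mu=1$) and multiplicativity in one stroke. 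You instead prove $\tau_i=\tau_j$ first via the diagonal lines $\langle e_1+\xi(e_i+e_j)\rangle\le\langle e_1,e_i+e_j\rangle$, and then derive additivity and multiplicativity from $2$-spaces that are free only on a generic stratum ($\eta$, resp.\ $\eta-1$, a unit), bootstrapping to the degenerate stratum via $1+\mathfrak{p}\subseteq\mathcal{O}_r^\times$. All of your steps check out: the coefficient comparisons are legitimate because $w_1,\ldots,w_d$ is a basis (Lemma \ref{lem:wj-def}), $\tau$ does preserve units (unit-ness of $\xi$ is detectable from the number of common $\mathcal{X}_2$-neighbors of $\langle e_1\rangle$ and $\langle e_1+\xi e_j\rangle$, via Lemma \ref{lem:trichotomy}), and the rewrites $(\xi-1)+(\eta+1)$ and $\xi+\xi(\eta-1)$ do land in the already-proven regime. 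What the paper's choice buys is the elimination of the entire case analysis; what yours buys is a more transparent first step, since equality of the $\tau_j$'s is established directly rather than as a byproduct of the multiplicativity relation.
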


\begin{proof}
We first prove that each $\tau_{j}$ is additive. For any $k\neq1,j$
and $\lambda,\mu\in\mathcal{O}$, we have $\left\langle e_{1}+(\mu+\lambda)e_{j}+e_{k}\right\rangle \leq\left\langle e_{1}+\mu e_{j},\lambda e_{j}+e_{k}\right\rangle $.
Applying $\varphi$ and using both parts of Lemma \ref{lem:semi-linear}
and (\ref{eq:t1t0}), we obtain $\left\langle w_{1}+\tau_{j}\left(\mu+\lambda\right)w_{j}+w_{k}\right\rangle \leq\left\langle w_{1}+\tau_{j}(\mu)w_{j},\tau_{j}(\lambda)w_{j}+w_{k}\right\rangle $,
which implies that $\tau_{j}\left(\mu+\lambda\right)=\tau_{j}(\mu)+\tau_{j}(\lambda)$.

In a similar manner, $\left\langle e_{1}+\mu\lambda e_{j}+\lambda e_{k}\right\rangle \leq\left\langle e_{1},\mu e_{j}+e_{k}\right\rangle $
gives $\left\langle w_{1}+\tau_{j}(\mu\lambda)w_{j}+\tau_{k}(\lambda)w_{k}\right\rangle \leq\left\langle w_{1},\tau_{j}(\mu)w_{j}+w_{k}\right\rangle $,
which implies that $\tau_{j}(\mu\lambda)=\tau_{k}(\lambda)\tau_{j}(\mu)$.
Taking $\mu=1$ we obtain that $\tau_{j}=\tau_{k}$, after which multiplicativity
follows.
\end{proof}
From this point we shall denote $\tau=\tau_{2}=\ldots=\tau_{d}$.
\begin{cor}
\label{cor:zero-inv}If $\lambda_{1}$ is either zero or invertible,
then $\varphi\left(\langle\left(\lambda_{1},\ldots,\lambda_{d}\right)\rangle\right)=\left\langle \sum_{i=1}^{d}\tau(\lambda_{i})w_{i}\right\rangle $
\end{cor}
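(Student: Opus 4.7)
The plan is to split into the two cases suggested by the hypothesis and reduce each to the already-proved Lemma \ref{lem:semi-linear}, using the newly-established fact that $\tau_2=\ldots=\tau_d=\tau$ is a single ring automorphism.

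Suppose first that $\lambda_1\in\mathcal{O}_r^{\times}$. Then the vector $(\lambda_1,\ldots,\lambda_d)$ generates the same $1$-space as $(1,\mu_2,\ldots,\mu_d)$, where $\mu_i=\lambda_1^{-1}\lambda_i$, so Lemma \ref{lem:semi-linear}(1) gives
\[
\varphi\!\left(\langle(\lambda_1,\ldots,\lambda_d)\rangle\right)=\Bigl\langle w_1+\textstyle\sum_{i=2}^{d}\tau(\mu_i)\,w_i\Bigr\rangle.
\]
Because $\tau$ is a ring automorphism we have $\tau(\mu_i)=\tau(\lambda_1)^{-1}\tau(\lambda_i)$, and since $\tau(\lambda_1)\in\mathcal{O}_r^{\times}$ we may rescale the generator by $\tau(\lambda_1)$ without changing the cyclic submodule, obtaining $\langle\sum_{i=1}^{d}\tau(\lambda_i)\,w_i\rangle$.

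Now suppose $\lambda_1=0$. Since $(\lambda_1,\ldots,\lambda_d)$ is primitive (it spans a vertex of $\mathcal{X}_1$), at least one of $\lambda_2,\ldots,\lambda_d$ must lie in $\mathcal{O}_r^{\times}$. Lemma \ref{lem:semi-linear}(2) then applies directly and yields
\[
\varphi\!\left(\langle(0,\lambda_2,\ldots,\lambda_d)\rangle\right)=\Bigl\langle\textstyle\sum_{i=2}^{d}\tau(\lambda_i)\,w_i\Bigr\rangle,
\]
which coincides with $\langle\sum_{i=1}^{d}\tau(\lambda_i)\,w_i\rangle$ because $\tau(0)=0$.

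There is no real obstacle here: the substance of the argument was carried out in Lemma \ref{lem:semi-linear}, and the corollary is a short bookkeeping step whose only role is to remove the asymmetric normalisation "$\lambda_1=1$" from part~(1) by invoking the ring-automorphism structure of $\tau$. The case $\lambda_1\in\mathfrak{p}\setminus\{0\}$ is deliberately left aside and will have to be handled separately later, since in that situation neither part of Lemma \ref{lem:semi-linear} directly applies.
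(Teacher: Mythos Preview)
Your proof is correct and follows the same route as the paper's own argument: for $\lambda_1\in\mathcal{O}_r^{\times}$ you rescale to first coordinate $1$, apply Lemma~\ref{lem:semi-linear}(1), and then use multiplicativity of $\tau$ to rescale back; for $\lambda_1=0$ you invoke Lemma~\ref{lem:semi-linear}(2) directly. The paper does exactly this in a single displayed line, so there is nothing to add.
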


\begin{proof}
Lemma \ref{lem:semi-linear} shows this for $\lambda_{1}=0$, and
for $\lambda_{1}$ invertible we have
\[
\varphi\left(\langle\left(\lambda_{1},\ldots,\lambda_{d}\right)\rangle\right)=\varphi\left(\langle\left(1,\tfrac{\lambda_{2}}{\lambda_{1}},\ldots,\tfrac{\lambda_{d}}{\lambda_{1}}\right)\rangle\right)=\left\langle w_{1}+\smash{\sum\nolimits_{i=2}^{d}}\tau\left(\tfrac{\lambda_{i}}{\lambda_{1}}\right)w_{i}\right\rangle =\left\langle \vphantom{\tfrac{\lambda_{i}}{\lambda_{1}}}\smash{\sum\nolimits_{i=1}^{d}}\tau(\lambda_{i})w_{i}\right\rangle .\qedhere
\]
\end{proof}
If $\mathcal{O}_{r}$ is a field then there is nothing left to do,
but we need to go further:
\begin{prop}
\label{prop:semi-linear-prim}If $d\geq3$ then for any primitive
$\left(\lambda_{1},\ldots,\lambda_{d}\right)$ we have $\varphi\left(\langle\left(\lambda_{1},\ldots,\lambda_{d}\right)\rangle\right)=\left\langle \sum_{i=1}^{d}\tau(\lambda_{i})w_{i}\right\rangle $.
\end{prop}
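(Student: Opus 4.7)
By Corollary \ref{cor:zero-inv}, the only case left to establish is when $\lambda_{1}\in\mathfrak{p}\setminus\{0\}$, i.e.\ $\lambda_{1}$ is a nonzero zero-divisor. Set $v=(\lambda_{1},\ldots,\lambda_{d})$. Primitivity of $v$ supplies an index $j\geq 2$ with $\lambda_{j}\in\mathcal{O}_{r}^{\times}$, and, crucially, the hypothesis $d\geq 3$ lets us also fix an index $k\in\{2,\ldots,d\}\setminus\{j\}$.

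The plan is to sandwich the line $\langle v\rangle$ between two free $2$-spaces whose $\varphi$-images we already know how to compute, and then identify $\varphi(\langle v\rangle)$ as their intersection. I would take $W_{1}=\langle e_{1},v\rangle$ and $W_{2}=\langle e_{1}+e_{k},v\rangle$. Checking that any relation on either generating pair forces the $v$-coefficient to vanish upon examining the $j$th coordinate (using that $\lambda_{j}$ is a unit and $k\neq j$) shows that both are free $2$-spaces. Replacing $v$ by $v-\lambda_{1}e_{1}$ in $W_{1}$, and by $v-\lambda_{1}(e_{1}+e_{k})$ in $W_{2}$, yields alternative bases whose vectors all have first coordinate either $0$ or $1$, while the $0$-coordinate vectors still carry the unit $\lambda_{j}$ in slot $j$. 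This brings every basis vector into the scope of Lemma \ref{lem:semi-linear}, and combined with Lemma \ref{lem:subsetspan} it produces explicit generators for $\varphi(W_{1})$ and $\varphi(W_{2})$.

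The final step is a short linear-algebra computation inside $\langle w_{1},\ldots,w_{d}\rangle$: writing a general element of $\varphi(W_{1})$ as $\alpha w_{1}+\beta\sum_{i=2}^{d}\tau(\lambda_{i})w_{i}$ and one of $\varphi(W_{2})$ in the corresponding form with parameters $\gamma,\delta$, and equating $w_{i}$-coefficients, produces a small system. The decisive equation, coming from coordinate $j$, is $(\beta-\delta)\tau(\lambda_{j})=0$; since $\tau\in\Aut_{Ring}(\mathcal{O}_{r})$ sends units to units, $\tau(\lambda_{j})$ is a unit, forcing $\beta=\delta$. The remaining coefficient equations then collapse the solution to the single line $\langle\sum_{i=1}^{d}\tau(\lambda_{i})w_{i}\rangle$. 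Since $\varphi(\langle v\rangle)$ is a free $1$-space contained in $\varphi(W_{1})\cap\varphi(W_{2})$, it must equal this intersection.

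The main obstacle I anticipate is conceptual rather than computational: the containment $\varphi(\langle v\rangle)\leq\varphi(W_{1})$ only cuts the ambient down to a $2$-space, so no single $W_{1}$ can pin the image down when $\lambda_{1}$ is a zero-divisor; one really does need the second, independently positioned $W_{2}$, and this is exactly where $d\geq 3$ is used. This parallels, and indeed explains, the exclusion of $d=2$ from Theorem \ref{thm:aut0X}.
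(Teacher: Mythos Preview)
Your argument is correct, but it takes a different route from the paper. The paper does not intersect two $2$-spaces; instead it exploits the symmetry of the construction: since $\lambda_{j}\in\mathcal{O}_{r}^{\times}$ for some $j\geq2$, one simply reruns Lemmas~\ref{lem:wj-def}--\ref{lem:tj-def} with $e_{j}$ playing the r\^ole of $e_{1}$ (and $w_{j}$ fixed), obtaining a basis $\{w_{i}'\}$ and an automorphism $\tau'$ so that Corollary~\ref{cor:zero-inv} applied at coordinate $j$ yields $\varphi(\langle v\rangle)=\langle\sum_{i}\tau'(\lambda_{i})w_{i}'\rangle$. A two-line check then shows $w_{i}'=w_{i}$ and $\tau'=\tau$.

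Your approach is more computational but more self-contained: you never leave the original $(w_{i},\tau)$ framework, and you only invoke Lemma~\ref{lem:semi-linear} and the already-established additivity of $\tau$ (needed to rewrite $\tau(\lambda_{k}-\lambda_{1})$). The paper's approach is shorter to write and conceptually cleaner---it recognises that coordinate $1$ was never special---but it implicitly relies on the reader accepting that the whole chain of lemmas goes through verbatim with the base index changed. Your intersection trick has the minor advantage of making the use of $d\geq3$ completely transparent (the existence of the auxiliary index $k\neq1,j$), whereas in the paper's proof the dependence on $d\geq3$ is hidden inside the rerun of the earlier lemmas.
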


\begin{proof}
Let $j$ be an invertible coordinate in $\left(\lambda_{1},\ldots,\lambda_{d}\right)$.
Let us begin again from Lemma \ref{lem:wj-def} with $j$ replacing
$1$, namely, setting $\varphi\left(\langle e_{j}\rangle\right)=\left\langle w_{j}\right\rangle $
and finding $w'_{k}$ $\left(k\neq j\right)$ such that $\varphi\left(\left\langle e_{k}\right\rangle \right)=\left\langle w'_{k}\right\rangle \text{ and }\varphi\left(\left\langle e_{j}+e_{k}\right\rangle \right)=\left\langle w_{j}+w_{k}'\right\rangle $,
and $\tau'$ such that $\varphi\left(\left\langle e_{j}+\xi e_{k}\right\rangle \right)=\left\langle w_{j}+\tau'\left(\xi\right)w_{k}'\right\rangle $
for any $\xi\in\mathcal{O}_{r}$. Applying Corollary \ref{cor:zero-inv}
with respect to the $j$-th coordinate we obtain $\varphi\left(\langle\left(\lambda_{1},\ldots,\lambda_{d}\right)\rangle\right)=\left\langle \sum_{i=1}^{d}\tau'(\lambda_{i})w_{i}'\right\rangle $.
To conclude, $\left\langle w_{j}+w_{k}\right\rangle =\varphi\left(\left\langle e_{j}+e_{k}\right\rangle \right)=\left\langle w_{j}+w_{k}'\right\rangle $
implies that $w_{k}=w'_{k}$ for all $k$, and from $\left\langle w_{j}+\tau(\xi)w_{k}\right\rangle =\varphi\left(\left\langle e_{j}+\xi e_{k}\right\rangle \right)=\left\langle w_{j}+\tau'(\xi)w_{k}\right\rangle $
we obtain that $\tau'=\tau$.
\end{proof}
\begin{proof}[Proof of Theorem \ref{thm:aut0X}]
Let $A$ be the matrix whose $j$-th column is the $w_{j}$ constructed
in Lemma \ref{lem:wj-def}, and let $\tau$ be the automorphism defined
in Lemma \ref{lem:tj-def}. For any primitive $v=\left(\lambda_{1},\ldots,\lambda_{d}\right)$,
Proposition \ref{prop:semi-linear-prim} gives 
\[
{\textstyle \left\langle A\overline{\tau}(v)\right\rangle =\left\langle A\overline{\tau}\sum_{i=1}^{d}\lambda_{i}e_{i}\right\rangle =\left\langle A\sum_{i=1}^{d}\tau\left(\lambda_{i}\right)e_{i}\right\rangle =\left\langle \sum_{i=1}^{d}\tau\left(\lambda_{i}\right)w_{i}\right\rangle =\varphi\left(\left\langle v\right\rangle \right),}
\]
so that $\varphi\big|_{\mathcal{X}_{1}}=\varphi_{A,\tau}\big|_{\mathcal{X}_{1}}$,
and by Corollary \ref{cor:Xn-determines}(1) we have $\varphi=\varphi_{A,\tau}$.
This implies that the map $GL_{d}\left(\mathcal{O}_{r}\right)\rtimes\Aut_{Ring}(\mathcal{O}_{r})\rightarrow\Aut^{0}\left(\mathcal{X}\right)$
is onto, and it obviously factors through $PGL_{d}$. On the other
hand, if $\varphi_{A,\tau}=id$ then considering $\left\langle e_{j}\right\rangle =\left\langle A\overline{\tau}(e_{j})\right\rangle =\left\langle Ae_{j}\right\rangle $
shows that $A$ is diagonal, and considering $A(\sum_{i=1}^{d}e_{i})$
shows it is actually scalar. It then follows that $\tau=id$ by considering
$\left\langle \overline{\tau}(1,\alpha,0,\ldots,0)\right\rangle $
for $\alpha\in\mathcal{O}_{r}$, and we obtain (\ref{eq:Aut0}).

Now let $\varphi\in\Aut\left(\mathcal{X}\right)$, and $\sigma\in\mathcal{X}^{d-2}$
a maximal free flag. As $\varphi(\sigma)\in\mathcal{X}^{d-2}$ as
well, $\varphi$ must permute the colors of $\sigma$'s vertices.
In addition, for every $\sigma'\in\mathcal{X}^{d}$ there is a sequence
$\sigma=\sigma_{0},\sigma_{1},\ldots,\sigma_{\ell}=\sigma'$ such
that $\sigma_{i},\sigma_{i+1}$ agree on all vertices but one, which
forces $\varphi$ to induce the same permutation on the colors of
$\sigma_{i},\sigma_{i+1}$, and thus of $\sigma$ and $\sigma'$.
If $\varphi(\mathcal{X}_{i})=\mathcal{X}_{j}$ then we must have $\left|\mathcal{X}_{i}\right|=\left|\mathcal{X}_{j}\right|$,
and since $\left|\mathcal{X}_{n}\right|={d \choose n}_{q}\cdot q^{\left(r-1\right)n\left(d-n\right)}$
(Proposition \ref{prop:Xn}) and $q\nmid{d \choose n}_{q}$, this
forces $j\in\{i,d-i\}$ for $r\geq2$ (for $r=1$ this is a standard
result). If $\varphi(\mathcal{X}_{1})=\mathcal{X}_{1}$, then every
$V\in\mathcal{X}_{1}$ has $S_{n-1}^{d-1}={d-1 \choose n-1}_{q}\cdot q^{\left(r-1\right)\left(n-1\right)\left(d-n\right)}$
neighbors in $\mathcal{X}_{n}$ (Theorem \ref{thm:Spectrum}), so
that $\varphi$ cannot take $\mathcal{X}_{n}$ to $\mathcal{X}_{d-n}$
unless $n=\frac{d}{2}$, so $\varphi$ preserves colors. If $\varphi(\mathcal{X}_{1})=\mathcal{X}_{d-1}$,
then we observe that $\varphi_{\bot}\colon V\mapsto V^{\bot}$ interchanges
$\mathcal{X}_{m}$ and $\mathcal{X}_{d-m}$ (Fact \ref{fact:fact});
Thus, $\varphi\circ\varphi_{\bot}$ preserves $\mathcal{X}_{1}$ and
therefore all colors, and we obtain (\ref{eq:Aut}).
\end{proof}

\section{\protect\label{sec:Subgraph-rigidity}Subgraph rigidity}

For $\mathcal{X}=\mathbb{P}_{fr}^{d-1}(\mathcal{O}_{r})$ and $S\subseteq\left\{ 1,\ldots,d-1\right\} $
we denote by $\mathcal{X}_{S}$ the subcomplex of $\mathcal{X}$ induced
by all vertices with colors in $S$. Our goal in this section is to
show that $\mathcal{X}_{m,n}$ already determines $\mathcal{X}$ in
entirety for any $1\leq m<n\leq d$, and furthermore that $\mathcal{X}$
and $\mathcal{X}_{m,n}$ have (almost) the same automorphism group. 
\begin{thm}
\label{thm:subgraph_rigidity}Let $\mathcal{X}=\mathbb{P}_{fr}^{d-1}(\mathcal{O}_{r})$
as before, and let $\mathcal{X}_{m,n}$ be the subgraph induced by
the vertices of colors $m$ and $n$ for some $1\leq m<n\leq d-1$.
\begin{enumerate}
\item The complex $\mathcal{X}$ can be reconstructed from $\mathcal{X}_{m,n}$.
\item Every automorphism of $\mathcal{X}_{m,n}$ extends uniquely to an
automorphism of $\mathcal{X}$, giving
\[
\Aut^{0}\left(\mathcal{X}_{m,n}\right)=\Aut^{0}\left(\mathcal{X}\right),\text{ and }\Aut\left(\mathcal{X}_{m,n}\right)=\begin{cases}
\Aut\left(\mathcal{X}\right) & m+n=d\\
\Aut^{0}\left(\mathcal{X}\right) & else.
\end{cases}
\]
\end{enumerate}
\end{thm}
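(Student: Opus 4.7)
The plan is to show that $\mathcal{X}_{m,n}$ canonically determines $\mathcal{X}$, so that graph automorphisms of $\mathcal{X}_{m,n}$ extend uniquely to simplicial automorphisms of $\mathcal{X}$. Part~(1) then follows immediately, and part~(2) reduces to comparing $\Aut(\mathcal{X}_{m,n})$ with $\Aut^0(\mathcal{X}_{m,n})$ by a size argument. The recurring tool is a Galois-type closure on pairs $(A,B) \subseteq \mathcal{X}_m \times \mathcal{X}_n$: for $A \subseteq \mathcal{X}_m$ let $U(A) \subseteq \mathcal{X}_n$ be the set of common $\mathcal{X}_{m,n}$-neighbors of all $M \in A$, and define $L(B) \subseteq \mathcal{X}_m$ dually.

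\textbf{Ranks between $m$ and $n$.} For $m \leq k \leq n$, each free $k$-space $V$ gives $(A_V, B_V) := (\{M \in \mathcal{X}_m : M \subseteq V\},\, \{N \in \mathcal{X}_n : V \subseteq N\})$, which is a maximal complete bipartite subgraph of $\mathcal{X}_{m,n}$. I would establish a bijection between $\bigsqcup_{k=m}^{n}\mathcal{X}_k$ and the pairs $(A,B)$ satisfying $B = U(A)$, $A = L(B)$, and $|A| = S_m^k$ for some $k$ in the range. Injectivity is immediate from $V = \sum A_V$. For surjectivity, set $V := \bigcap_{N \in B} N$; I would first use the auxiliary lemma that a free $k$-space equals the intersection of the free $n$-spaces containing it (valid for $n \leq d-1$), then verify that $A = L(B) = \{M \in \mathcal{X}_m : M \subseteq V\}$. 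The cardinality constraint $|A| = S_m^k$ combined with the formula of Proposition \ref{prop:contains_submodules}, namely $S_m^s q^{m\Delta}$ for $V$ of type $(s;k_1,\ldots,k_t)$ with $\Delta = \sum(r-k_j)$, forces $s = k$ and $\Delta = 0$ (using that Gaussian binomials are strictly monotone and coprime to $q$), so $V$ is free of rank exactly $k$. This simultaneously recovers all vertices of colors in $[m,n]$ together with their mutual incidences.

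\textbf{Ranks outside $[m,n]$.} For $k < m$ I would proceed by downward induction, using the previously reconstructed $\mathcal{X}_{m,m+1,\ldots,n}$ (or, in the edge case $n = m+1$, using $\mathcal{X}_{m,n}$ directly via common-super-space patterns on pairs of $m$-spaces). A free $(m-1)$-space $W$ is identified with its ``upward star'' $\tilde A_W = \{M \in \mathcal{X}_m : W \subseteq M\}$, via $W = \bigcap \tilde A_W$; these stars are characterized combinatorially as maximal closed sets in $\mathcal{X}_m$ whose elements share a common rank-$(m-1)$ submodule, with the appropriate cardinality prescribed by Proposition \ref{prop:contains_submodules}. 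Iterating downward to $\mathcal{X}_1$ and symmetrically upward to $\mathcal{X}_{d-1}$ recovers all remaining colors. This is the main obstacle: over $\mathcal{O}_r$, sums and intersections of free modules need not be free, so the characterizations must distinguish genuine free sub- and super-modules from torsion-containing pseudo-candidates. I would handle this by again appealing to Smith normal forms (Fact \ref{fact:fact}) and the cardinality arithmetic above, which detects freeness combinatorially.

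\textbf{Automorphisms and the color-swap.} Since every step of the reconstruction refers only to the abstract graph structure of $\mathcal{X}_{m,n}$, any $\varphi \in \Aut(\mathcal{X}_{m,n})$ lifts canonically to $\tilde\varphi \in \Aut(\mathcal{X})$, and uniqueness of the lift follows from Corollary \ref{cor:Xn-determines}. This yields $\Aut^0(\mathcal{X}_{m,n}) = \Aut^0(\mathcal{X})$. For the full automorphism group, a non-color-preserving $\varphi$ would swap the color classes, which requires $|\mathcal{X}_m| = |\mathcal{X}_n|$; by the formula $|\mathcal{X}_k| = \binom{d}{k}_q q^{(r-1)k(d-k)}$ of Proposition \ref{prop:Xn}, this happens iff $n = d-m$. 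When $m+n = d$ the duality $\varphi_\perp \colon V \mapsto V^\perp$ interchanges $\mathcal{X}_m$ and $\mathcal{X}_n$ (Fact \ref{fact:fact}), giving $\Aut(\mathcal{X}_{m,n}) = \Aut(\mathcal{X})$; otherwise no swap exists and $\Aut(\mathcal{X}_{m,n}) = \Aut^0(\mathcal{X})$, yielding the stated dichotomy.
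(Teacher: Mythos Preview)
Your Galois-closure argument for ranks $k\in[m,n]$ is correct and is a genuinely cleaner route than the paper's step-by-step buildup for that interval: closed pairs $(A,B)$ with $|A|=S_m^k$ do biject with free $k$-spaces, since writing $V=\bigcap B$ one has $A=\{M\in\mathcal{X}_m:M\subseteq V\}$, and then $q\nmid\binom{s}{m}_q$ together with the strict monotonicity of $s\mapsto\binom{s}{m}_q$ forces $V$ to be free of rank exactly $k$. The paper instead adds one color at a time (its cases (A)--(D)), so for the interval $[m,n]$ your approach is a real simplification.

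The gap is the passage to ranks outside $[m,n]$, which you rightly flag as ``the main obstacle'' but do not actually resolve. You propose to recognize a free $(m{-}1)$-space $W$ via its upward star $\tilde A_W=\{M\in\mathcal{X}_m: W\subseteq M\}$, described as a ``maximal closed set whose elements share a common rank-$(m{-}1)$ submodule'' of the prescribed size---but ``share a common rank-$(m{-}1)$ submodule'' is precisely the non-combinatorial condition you need to encode. The natural combinatorial proxy, that each pair $M_1,M_2\in\tilde A$ satisfies $M_1+M_2\in\mathcal{X}_{m+1}$ (which \emph{is} detectable once $\mathcal{X}_{m+1}$ is in hand), does not cut out the stars: already for $m=2$ the family $\{\langle e_1,e_2\rangle,\langle e_1,e_3\rangle,\langle e_2,e_3\rangle\}$ satisfies it pairwise while having trivial common intersection, and such families can be enlarged further. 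You give no argument that cardinality separates genuine stars from these impostors. This is exactly where the paper invests its effort: to add color $n{+}1$ (case (C)) one detects $V_1+V_2\in\mathcal{X}_{n+1}$ via the common-neighbor count~\eqref{eq:V1V2_free}, but then must recover \emph{all} $m$-subspaces of $V_1+V_2$ from those of $V_1$ and $V_2$ in order to glue correctly---this is the nontrivial Lemma~\ref{lem:V1+V2=000020subspaces}, an explicit SNF case analysis. Adding color $m{-}1$ (case (D)) is then obtained by conjugating with $\varphi_\bot$. Without an analogue of that lemma, your reconstruction outside $[m,n]$ is not established, and the appeal to ``Smith normal forms and cardinality arithmetic'' does not close it.
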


\begin{cor}
\label{cor:isom-graph}If $\left|\mathcal{O}/\mathfrak{p}\right|=\left|\mathcal{O}'/\mathfrak{p}'\right|$
and $\mathcal{X}=\mathbb{P}_{fr}^{d-1}(\mathcal{O}_{r})$ and $\mathcal{X}'=\mathbb{P}_{fr}^{d-1}(\mathcal{O}'_{r})$
satisfy $\mathcal{X}_{m,n}\cong\mathcal{X}_{m,n}^{'}$ for some $1\leq n<m\leq d-1$,
then $\mathcal{X}\cong\mathcal{X}'$.
\end{cor}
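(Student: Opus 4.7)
The plan is to deduce the corollary directly from Theorem~\ref{thm:subgraph_rigidity}(1). That theorem asserts that $\mathcal{X}$ can be reconstructed from $\mathcal{X}_{m,n}$; the content I need is that this reconstruction is \emph{canonical}, expressing the vertices of each other color $c \notin \{m,n\}$ and the higher-dimensional flags purely in terms of the abstract graph structure of $\mathcal{X}_{m,n}$. Granted this, any graph isomorphism $\psi \colon \mathcal{X}_{m,n} \to \mathcal{X}'_{m,n}$ must carry the reconstructed data on the left to the corresponding data on the right, and hence extend to a simplicial isomorphism $\widehat{\psi} \colon \mathcal{X} \to \mathcal{X}'$.

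Concretely, I would inspect the proof of part~(1) and verify that each vertex $V \in \mathcal{X}_{c}$ for $c \notin \{m,n\}$ is characterized by some combinatorial condition involving only the vertices and edges of $\mathcal{X}_{m,n}$---for example, as the unique object attached to a specific distinguished family of $m$- and $n$-vertices and their incidences. For $U \in \mathcal{X}'_{c}$ defined by the same condition applied to $\psi$-images, set $\widehat{\psi}(V) = U$; extend $\widehat{\psi}$ by $\psi$ on $\mathcal{X}_{m,n}$. Since incidence between vertices of $\mathcal{X}$ is also characterized in part~(1) through $\mathcal{X}_{m,n}$, and the same characterization applies verbatim to $\mathcal{X}'$, the map $\widehat{\psi}$ automatically preserves flags and is a color-preserving simplicial isomorphism.

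The hypothesis $|\mathcal{O}/\mathfrak{p}| = |\mathcal{O}'/\mathfrak{p}'|$ ensures that the two reconstructions are of matching combinatorial type; it is in fact essentially forced by $\mathcal{X}_{m,n} \cong \mathcal{X}'_{m,n}$, since the vertex and edge counts of $\mathcal{X}_{m,n}$ already determine $q$ by Propositions~\ref{prop:Xn} and~\ref{prop:contains_submodules}. The only real obstacle is to confirm that the reconstruction in part~(1) is phrased in terms intrinsic to the abstract graph $\mathcal{X}_{m,n}$, rather than implicitly invoking the ambient module structure of $\mathcal{O}_r^d$; this is exactly the content of a reconstruction theorem and should require no further algebraic input beyond what part~(1) already supplies.
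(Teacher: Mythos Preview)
Your proposal is correct and matches the paper's intended argument: the corollary is stated immediately after Theorem~\ref{thm:subgraph_rigidity} with no separate proof, precisely because the reconstruction in part~(1) (carried out in cases \textbf{(A)}--\textbf{(D)}) uses only graph-theoretic conditions whose numerical parameters depend solely on $q,d,r,m,n$, and hence transports along any isomorphism $\mathcal{X}_{m,n}\cong\mathcal{X}'_{m,n}$ once $q=q'$. The one small point to tidy is that $\psi$ need not be color-preserving when $m+n=d$, but composing with $\varphi_{\bot}$ handles that case and still yields $\mathcal{X}\cong\mathcal{X}'$.
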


\begin{proof}[Proof of Theorem \ref{thm:subgraph_rigidity}]
Let us show first the uniqueness in claim (2). Denoting $\mathcal{G}=\begin{cases}
\Aut\left(\mathcal{X}\right) & n=d-m\\
\Aut^{0}\left(\mathcal{X}\right) & else.
\end{cases}$, there is a restriction map $\Phi\colon\mathcal{G}\rightarrow\Aut(\mathcal{X}_{m,n})$.
If $\Phi\left(\varphi\right)=\Phi\left(\varphi'\right)$ for $\varphi,\varphi'\in\mathcal{G}$
then either $\varphi,\varphi'$ both preserve or reverse coloring.
In the former case Corollary \ref{cor:Xn-determines}(1) shows that
$\varphi=\varphi'$, and in the latter the same follows by considering
$\varphi\circ\varphi_{\bot}$ and $\varphi'\circ\varphi_{\bot}$.
To show claim (2) we need to establish that $\Phi$ is onto. We show
this together with claim (1), by demonstrating that we can reconstruct
and extend automorphisms from $\mathcal{X}_{S}$ to $\mathcal{X}_{S\cup\left\{ c\right\} }$
for various $c$. We observe several cases separately:

\textbf{(A) }Reconstructing $\mathcal{X}_{1,\ldots,i+1,n}$ from $\mathcal{X}_{1,\ldots,i,n}$
($1\leq i<n$): Let $L\in\mathcal{X}_{1}$ and $V\in\mathcal{X}_{i}$.
If $L+V$ is not an $(i+1)$-space, then being $(i+1)$-generated
it is contained in more than one $\left(i+1\right)$-space (Lemma
\ref{lem:trichotomy}), and thus the number of $n$-spaces containing
$L,V$ is minimal when $L+V$ is an $(i+1)$-space.

Thus, for all $L,V$ with a minimal number of common neighbors in
$\mathcal{X}_{n}$ we add a vertex labeled $L+V$ to $\mathcal{X}_{i+1}$,
and connect it to all the aforementioned neighbors in $\mathcal{X}_{n}$,
to $L$, and to $V$ and its subspaces. Next, we glue together two
vertices $L+V$ and $L'+V'$ if they have the same set of neighbors
in $\mathcal{X}_{n}$, uniting their neighbor sets in lower dimensions.
We have reconstructed $\mathcal{X}_{i+1}$ properly as every $\left(i+1\right)$-space
is obtained uniquely in this manner.

Extending $\varphi\in\Aut^{0}(\mathcal{X}_{1,\ldots,i,n})$: we wish
to extend $\varphi$ by $\varphi(V+L):=\varphi(V)+\varphi(L)$, for
$L\in\mathcal{X}_{1}$ and $V\in\mathcal{X}_{i}$ such that $L+V\in\mathcal{X}_{i+1}$.
As we have seen,\textcolor{blue}{{} }$L+V\in\mathcal{X}_{i+1}$ implies
that $L,V$ have the minimal possible number of common $\mathcal{X}_{n}$-neighbors,
hence the same is true for $\varphi(L)$ and $\varphi(V)$, so $\varphi(V)+\varphi(L)\in\mathcal{X}_{i+1}$.
If $L+V=L'+V'$ then $L,V$ and $L',V'$ have the same common $\mathcal{X}_{n}$-neighbors,
which implies that $\varphi(L'),\varphi(V')$ have the same common
$\mathcal{X}_{n}$-neighbors as $\varphi(L),\varphi(V)$, which implies
that $\varphi(V')+\varphi(L')=\varphi(V)+\varphi(L)$, i.e.\ $\varphi$
is well defined. As $\mathcal{X}$ is a clique complex, it remains
to show that $\varphi$ is a graph automorphism, and noting that our
extension process commutes with taking inverses (of $\varphi$), it
is enough to show that $\varphi$ takes neighbors to neighbors. First,
if $V+L\sim U$ for $V+L\in\mathcal{X}_{i+1}$ and $U\in\mathcal{X}_{n}$,
then $V,L\sim U$, hence $\varphi(V),\varphi(L)\sim\varphi(U)$, so
that $\varphi(V+L)=\varphi(V)+\varphi(L)\sim\varphi(U)$. Next, if
$V\in\mathcal{X}_{i}$ and $W\in\mathcal{X}_{i+1}$ are neighbors
then there exist $L\in\mathcal{X}_{1}$ such that $W=V+L$, and then
$\varphi(V)\subseteq\varphi(V)+\varphi(L)=\varphi(W)$. This also
handles lower dimensions by transitivity: if $U\in\mathcal{X}_{j}$
for $j<i$ is a neighbor of $W\in\mathcal{X}_{i+1}$, then there exist
$V\in\mathcal{X}_{i}$ with $U\subseteq V\subseteq W$, and then $\varphi(U)\subseteq\varphi(V)\subseteq\varphi(W)$.

\textbf{(B)}{\bfseries\footnote{This is a special case of \textbf{(C)} but it is much simpler, so
we present it as a warm-up.}}\textbf{ }Reconstructing $\mathcal{X}_{1,n,n+1}$ from $\mathcal{X}_{1,n}$:
For $L\in\mathcal{X}_{1}$ and $V\in\mathcal{X}_{n}$ we observe the
set of 3-paths $V\rightarrow L'\rightarrow V'\rightarrow L$ with
$L'\in\mathcal{X}_{1},V'\in\mathcal{X}_{n}$. If $L+V$ is an $n+1$-space
(equivalently, $L\cap V=0$), then for every $L'\subseteq V$ we have
$L\cap L'=0$, which implies $L+L'\cong\mathcal{O}_{r}^{2}$, so there
are $S_{n-2}^{d-2}$ choices of $V'$ containing both $L$ and $L'$.
In contrast, if $L\cap V\neq0$ then there exist $\mathcal{X}_{1}\ni L'\subseteq V$
with $L\cap L'\cong\mathfrak{p}^{k}$ ($0\leq k<r$), hence $L+L'\cong\mathcal{O}_{r}\times\mathfrak{p}^{r-k}$,
and there are more than $S_{n-2}^{d-2}$ possibilities for $V'$ completing
$V\rightarrow L'\rightarrow\square\rightarrow L$: $S_{n-1}^{d-1}$
if $k=0$, and $S_{n-2}^{d-2}q^{(d-n)(r-k)}$ otherwise. Thus, we
can detect the pairs $L,V$ with $L\cap V=0$, and for each one we
add a vertex labeled $L+V$ to $\mathcal{X}_{n+1}$, and connect it
to $L$ and $V$. We can find all lines contained in $L+V$ as follows:
for every $n-1$ sublines $L_{1},\ldots,L_{n-1}$ of $V$ such that
$L,L_{1},\ldots,L_{n-1}$ are only contained in a single $n$-space
$V'\in\mathcal{X}_{n}$, we have $V'=L\oplus L_{1}\oplus\ldots\oplus L_{n-1}\subseteq V+L$
(see Lemma \ref{lem:trichotomy}), and we connect $L+V$ to all the
lines contained in $V'$. This covers all lines in $L+V$, as if $L'\subseteq L\oplus V\cong\mathcal{O}_{r}^{n+1}$\textcolor{red}{{}
}then there exist $L_{1},\ldots,L_{n-1}$ as above with $L'\leq\left\langle L,L_{1}\right\rangle \leq\left\langle L,L_{1},\ldots,L_{n-1}\right\rangle $.
We can now glue together $L+V$ and $L'+V'$ whenever they contain
the same lines, obtaining $\mathcal{X}_{n+1}$.

\textbf{(C) }Reconstructing $\mathcal{X}_{m,n,n+1}$ from $\mathcal{X}_{m,n}$
($m<n$): We note first that for $V_{1},V_{2}\in\mathcal{X}_{n}$,
$V_{1}+V_{2}$ is an $(n+1)$-space iff $V_{1}\cap V_{2}$ is an $(n-1)$-space.
Let $\mathfrak{m}_{V_{1},V_{2}}$ be the number common $\mathcal{X}_{m}$-neighbors
of $V_{1}$ and $V_{2}$. If $V_{1}\cap V_{2}$ is of type $\left(s\,;k_{1},\ldots,k_{t}\right)$
then Proposition \ref{prop:contains_submodules}(1) gives
\[
\mathfrak{m}_{V_{1},V_{2}}={\textstyle {s \choose m}_{q}\cdot q^{\left(\left(s-m\right)\left(r-1\right)+\left(\sum_{j=1}^{t}r-k_{j}\right)\right)m}}.
\]
When $V_{1}\cap V_{2}$ is an $(n-1)$-space we obtain 
\begin{equation}
{\textstyle \mathfrak{m}_{V_{1},V_{2}}={n-1 \choose m}_{q}\cdot q^{\left(n-1-m\right)\left(r-1\right)m}},\label{eq:V1V2_free}
\end{equation}
and only then: (\ref{eq:V1V2_free}) implies ${s \choose m}_{q}={n-1 \choose m}_{q}$
since $q\nmid{n \choose k}_{q}$ whenever ${n \choose k}_{q}\neq0$,
forcing $s=n-1$ and $t=0$. Consequently, for every $V_{1},V_{2}$
satisfying (\ref{eq:V1V2_free}) we add a vertex labeled $V_{1}+V_{2}$
to $\mathcal{X}_{n+1}$, and connect it to $V_{1},V_{2}$ and all
their $m$-subspaces. Now, for every collection of $m$-spaces $W_{1},\ldots,W_{r}$,
their sum is an $n$-space iff they have a unique common neighbor
in $\mathcal{X}_{n}$ (by Lemma \ref{lem:trichotomy} applied to $V=W_{1}+\ldots+W_{r}$).
Thus, if $W_{1},\ldots,W_{r}\in\mathcal{X}_{m}$ have a unique common
$\mathcal{X}_{n}$-neighbor $V$, and each $W_{i}$ is contained in
$V_{1}$ or in $V_{2}$, then $W_{1}+\ldots+W_{r}=V\subseteq V_{1}+V_{2}$,
and we connect $V_{1}+V_{2}$ to all $m$-subspaces of $V$. Showing
that we have found all $m$-subspaces of $V_{1}+V_{2}$ is somewhat
technical, so we prove this separately in Lemma \ref{lem:V1+V2=000020subspaces}
below, and we can now glue $V_{1}+V_{2}$ with $V'_{1}+V'_{2}$ whenever
they have the same set of $m$-subspaces.

Next, for $\varphi\in\Aut^{0}\left(\mathcal{X}_{S}\right)$ with $m,n\in S$,
$m<n$, and $n+1\notin S$, we wish to extend $\varphi$ to $\mathcal{X}_{S\cup\{n+1\}}$
by $\varphi(V_{1}+V_{2})=\varphi(V_{1})+\varphi(V_{2})$ whenever
$V_{1},V_{2}\in\mathcal{X}_{n}$ and $V_{1}+V_{2}\in\mathcal{X}_{n+1}$.
As $V_{1}+V_{2}\in\mathcal{X}_{n+1}$ iff $V_{1},V_{2}$ have (\ref{eq:V1V2_free})
many common $\mathcal{X}_{m}$-neighbors, this is preserved by $\varphi$
and is thus equivalent to $\varphi(V_{1})+\varphi(V_{2})\in\mathcal{X}_{n+1}$.
To show $\varphi$ is well-defined, define 
\[
\mathcal{F}_{V_{1},V_{2}}=\left\{ W\in\mathcal{X}_{m}\,\middle|\,{\text{there exist }W_{1},\ldots,W_{r}\in\mathcal{X}_{m},\text{ each}\text{ a neighbor of \ensuremath{V_{1}} or \ensuremath{V_{2}},}\atop \text{having have a unique \ensuremath{\mathcal{X}_{n}}-neighbor, which itself neighbors \ensuremath{W}.}}\right\} .
\]
As these are graph-theoretic conditions, we obtain $\mathcal{F}_{\varphi(V_{1}),\varphi(V_{2})}=\left\{ \varphi(W)\,\middle|\,W\in\mathcal{F}_{V_{1},V_{2}}\right\} $.
By Lemma \ref{lem:V1+V2=000020subspaces}, $\mathcal{F}_{V_{1},V_{2}}$
is precisely the set of $\mathcal{X}_{m}$-neighbors of $V_{1}+V_{2}$,
so that if $V_{1}+V_{2}=V_{1}'+V_{2}'$ then $\mathcal{F}_{V_{1},V_{2}}=\mathcal{F}_{V_{1}',V_{2}'}$,
and therefore $\mathcal{F}_{\varphi(V_{1}),\varphi(V_{2})}=\mathcal{F}_{\varphi(V_{1}'),\varphi(V_{2}')}$.
This implies that $\varphi(V_{1})+\varphi(V_{2})$ and $\varphi(V_{1}')+\varphi(V_{2}')$
have the same $\mathcal{X}_{m}$-neighbors, and are thus equal. Finally
we now that $\varphi$ takes neighbors to neighbors: for $\mathcal{X}_{n}\ni V\subseteq W\in\mathcal{X}_{n+1}$,
we can find $V'\in\mathcal{X}_{n}$ such that $V+V'=W$, and then
$\varphi(V)\subseteq\varphi(V)+\varphi(V')=\varphi(W)$. Transitivity
then shows that $\varphi$ also preserves neighboring between $\mathcal{X}_{n+1}$
and $\mathcal{X}_{j}$ with any $j<n$. For $j\geq n+2$, if $\mathcal{X}_{n+1}\ni V_{1}+V_{2}\subseteq V\in\mathcal{X}_{j}$
for $V_{1},V_{2}\in\mathcal{X}_{n}$ then $V_{1},V_{2}\subseteq V$
implies $\varphi(V_{1}),\varphi(V_{2})\subseteq\varphi(V)$ and thus
$\varphi(V_{1}+V_{2})=\varphi(V_{1})+\varphi(V_{2})\subseteq\varphi(V)$.

\textbf{(D) }Reconstructing $\mathcal{X}_{m-1,m,n}$ from $\mathcal{X}_{m,n}$
($m<n$): This is obtained from case \textbf{(C)} by observing that
$\varphi_{\bot}$ gives isomorphisms $\mathcal{X}_{m,n}\cong\mathcal{X}_{d-n,d-m}$
and $\mathcal{X}_{d-n,d-m,d-m+1}\cong\mathcal{X}_{m-1,m,n}$. Similarly,
for $S$ with $m,n\in S$ and $m-1\notin S$, we can extend $\varphi\in\Aut^{0}(\mathcal{X}_{S})$
to $\mathcal{X}_{\{m-1\}\cup S}$ by applying $\varphi_{\bot}$ and
using case \textbf{(C)}.

In conclusion, we can reconstruct $\mathcal{X}$ from $\mathcal{X}_{m,n}$
and extend any color-preserving automorphism by applying cases \textbf{(D)},
\textbf{(A)}, and then \textbf{(B)} (or \textbf{(C)}) each applied
the appropriate number of times. If $n=d+1-m$ then $\mathcal{X}_{m,n}$
has color-reversing automorphisms as well, but each one can be extended
to $\mathcal{X}$ by composing it with $\varphi_{\bot}$, extending
to $\mathcal{X}$ and composing back with $\varphi_{\bot}$.
\end{proof}
\begin{lem}
\label{lem:V1+V2=000020subspaces}For $m<n$, let $V_{1},V_{2}\in\mathcal{X}_{n}$
with $V_{1}+V_{2}\in\mathcal{X}_{n+1}$, and denote $\ensuremath{S=\left\{ m\text{-spaces contained in }V_{1}\text{ or in }V_{2}\right\} }$.
If $\mathcal{X}_{m}\ni W\leq V_{1}+V_{2}$, then $W$ is contained
in some $n$-space $V$ which is the sum of elements from $S$.
\end{lem}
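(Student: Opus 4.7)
The plan is to construct $V$ case-by-case on the position of $W$, relying on the observation (easily verified by SNF) that a submodule $U\leq V_j$ of type $(s;k_1,\ldots,k_t)$ is a sum of $m$-subspaces of $V_j$ iff $s\geq m$ or $U=0$. In particular, any free submodule of $V_j$ of rank $\geq m$ (including $V_j$ itself) is such a sum.

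If $W\leq V_1$ or $W\leq V_2$, take $V=V_1$ or $V=V_2$ respectively; each has type $(n;)$ with $n\geq m$, hence is a sum of $m$-subspaces of itself. Otherwise, fix a basis $f_1,\ldots,f_{n-1}$ of $V_1\cap V_2$ and extend to bases of $V_1$ and $V_2$ by adjoining $f_n\in V_1\setminus V_2$ and $f_{n+1}\in V_2\setminus V_1$. I would seek an $\mathcal{O}_r$-linear functional $\phi\colon V_1+V_2\to\mathcal{O}_r$ with $\phi(W)=0$ and $\phi|_{V_1\cap V_2}$ primitive, and set $V:=\ker\phi$. Primitivity of $\phi|_{V_1\cap V_2}$ forces primitivity of $\phi$ and of each $\phi|_{V_j}$, so $V\cap V_j=\ker\phi|_{V_j}$ is free of rank $n-1$, and $V\cap V_1\cap V_2=\ker\phi|_{V_1\cap V_2}$ is free of rank $n-2$. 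A short-exact-sequence computation (the diagonal inclusion of $V\cap V_1\cap V_2$ into $V\cap V_1\oplus V\cap V_2$ is a direct summand) then shows $(V\cap V_1)+(V\cap V_2)$ is free of rank $(n-1)+(n-1)-(n-2)=n$, hence equals $V$. Since $n-1\geq m$, this expresses $V$ as a sum of elements from $S$.

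Reducing modulo $\pi$, the existence of such $\phi$ translates by linear duality to the condition $\overline{V_1\cap V_2}\not\subseteq \overline{W}$ in $\overline{V_1+V_2}\cong\mathbb{F}_q^{n+1}$, which by dimension count fails only in the degenerate subcase $m=n-1$ with $\overline{W}=\overline{V_1\cap V_2}$. There I would proceed directly: choose a basis $w_1,\ldots,w_{n-1}$ of $W$ with $\overline{w_i}=\overline{f_i}$, write $w_i=f_i+\pi m_i$ with $m_i=\sum_j c_{i,j}f_j$, and set $V:=W+\langle f_n\rangle$. The $n$ generators $(w_i,f_n)$ reduce modulo $\pi$ to the basis $(\overline{f_1},\ldots,\overline{f_n})$ of $\overline{V_1}$, and a direct computation (the block $I_{n-1}+\pi C_0$ arising in the coordinate matrix is invertible) confirms $V$ is free of rank $n$. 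Setting $w_i':=w_i-\pi c_{i,n}f_n\in V_2$, we obtain $V=\langle f_n\rangle+U_2$ where $U_2:=\langle w_1',\ldots,w_{n-1}'\rangle\leq V_2$ is free of rank $n-1$ (same SNF argument), and one checks $V\cap V_2=U_2$, while $V\cap V_1=\langle f_n\rangle+(U_2\cap V_1\cap V_2)$. The latter intersection is cut out of $U_2\cong\mathcal{O}_r^{n-1}$ by the single linear condition $\sum_i\alpha_i c_{i,n+1}\in\mathfrak{p}^{r-1}$ and therefore has free rank $n-2$, so $V\cap V_1$ has free rank $\geq n-1=m$. Hence $V=(V\cap V_1)+(V\cap V_2)$ is again a sum of elements from $S$. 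The main obstacle is precisely this degenerate subcase, where the abstract functional-theoretic argument breaks down and one must construct $V$ by hand and verify free rank and proper decomposition via explicit SNF-style calculations.
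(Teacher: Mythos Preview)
Your argument is correct and takes a genuinely different route from the paper's. The paper normalizes $W$ via SNF and the action of the flag-stabilizer $G$, then splits into $m\leq n-2$ (where $W$ visibly lies in a coordinate hyperplane $e_{n-1}^{\bot}$) and $m=n-1$ (where three explicit $2\times 4$ subcases are treated by hand). Your approach replaces most of this with a dual argument: whenever $\overline{V_{1}\cap V_{2}}\not\subseteq\overline{W}$ (which covers all of $m\leq n-2$ and part of $m=n-1$), you produce a single functional $\phi$ with $V=\ker\phi$, and only one residual subcase ($m=n-1$, $\overline{W}=\overline{V_{1}\cap V_{2}}$) needs a direct construction. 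This is cleaner conceptually and avoids coordinates almost entirely; the paper's version, by contrast, produces the witnessing $V$ explicitly in every case.

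Two places in your write-up are compressed enough to deserve a sentence more. First, the ``short-exact-sequence computation'' for $(V\cap V_{1})+(V\cap V_{2})=V$: the cleanest way is to pick $u\in V_{1}\cap V_{2}$ with $\phi(u)=1$ (primitivity), and for $v=v_{1}+v_{2}\in V$ replace $v_{i}$ by $v_{i}\mp\phi(v_{1})u$; this also shows that $V\cap V_{1}\cap V_{2}$ is a direct summand of each $V\cap V_{j}$, which is what makes your exact-sequence phrasing work. Second, in the degenerate case, ``free rank $n-2$'' for $U_{2}\cap V_{1}\cap V_{2}$ uses that the vector $(c_{i,n+1})_{i}$ is not entirely in $\mathfrak{p}^{r-1}$, which is exactly the hypothesis $W\nleq V_{1}$; once noted, the kernel of $\alpha\mapsto\sum_{i}\alpha_{i}c_{i,n+1}\bmod\mathfrak{p}^{r-1}$ has type $(n-2;\,r-1-k)$ for some $k\leq r-2$, and adding the free line $\langle f_{n}\rangle$ (which meets $V_{1}\cap V_{2}$ trivially) gives type $(n-1;\,r-1-k)$, so free rank $n-1=m$ as you claim.
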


\begin{proof}
We can assume $W\nleq V_{1,}V_{2}$, as if $W\leq V_{i}$ then we
can take $V=V_{i}$, which equals the sum of its $m$-subspaces. By
extending a basis of $V_{1}\cap V_{2}$ to $V_{1}$ and then to $V_{1}+V_{2}$,
we identify $V_{1}+V_{2}$ with $\mathcal{O}_{r}^{n+1}$ so that $V_{1}\cap V_{2}=\left\langle e_{1},\ldots,e_{n-1}\right\rangle $
and $V_{1}=\left\langle e_{1},\ldots,e_{n}\right\rangle $, which
forces $V_{2}=\left\langle e_{1},\ldots,e_{n-1},\alpha e_{n}+e_{n+1}\right\rangle $
for some $\alpha\in\mathcal{O}_{r}$. We note that the subgroup of
$GL_{n+1}(\mathcal{O}_{r}^{n+1})$ which preserves the flag $V_{1}\cap V_{2}\leq V_{1}\leq V_{1}+V_{2}$
is $G=\left(\begin{smallmatrix}GL_{n-1}(\mathcal{O}_{r}) & * & *\\
-\ 0\ - & \mathcal{O}_{r}^{\times} & *\\
-\ 0\ - & 0 & \mathcal{O}_{r}^{\times}
\end{smallmatrix}\right)$, so applying any $g\in G$ to $\mathcal{O}_{r}^{n+1}$ gives another
valid identification.

Let $A\in M_{m\times(n+1)}(\mathcal{O}_{r})$ be such that $W=\mathcal{O}_{r}^{m}A$.
Acting on $A$ from the left by $GL_{m}(\mathcal{O}_{r})$ does not
change $\mathcal{O}_{r}^{m}A$, and acting from the right by $g\in G$
is equivalent to changing the identification of $V_{1}+V_{2}$ with
$\mathcal{O}_{r}^{n+1}$ by $g$, so we allow both actions w.l.o.g..
These are enough to bring the leftmost $n-1$ columns of $A$ to SNF,
and add any column to ones on its right. We thus assume that $A=\left(B|v|w\right)$
with $B\in M_{m\times n-1}(\mathcal{O}_{r})$ in SNF, and if $B_{i,i}=1$
then $v_{i}=w_{i}=0$. We have $B\neq I$, for otherwise $v=w=0$,
which would imply $W\leq V_{1}$.

If $m\leq n-2$ then the rightmost column of $B$ is zero, so that
$W\leq e_{n-1}^{\bot}$. We can then take $V=e_{n-1}^{\bot}$, as
$e_{n-1}^{\bot}=\left\langle e_{1},\ldots,e_{n-2},e_{n},\alpha e_{n}+e_{n+1}\right\rangle $
is a sum of $m$-spaces in $S$. We thus assume $m=n-1$, and observe
that since $\mathrm{rank}\left(A\,\mathrm{mod}\,\mathfrak{p}\right)=\mathrm{rank}\left(\nicefrac{\mathcal{O}}{\mathfrak{p}}\otimes W\right)=m=n-1$,
we must have $\mathrm{rank}\left(B\,\mathrm{mod}\,\mathfrak{p}\right)\geq n-3$,
so $B=\mathrm{diag}\left(1,\ldots,1,*,*\right)$. We observe the $2\times4$
right-bottom block of $A=\left(B|v|w\right)$, and split into cases:
\begin{description}
\item [{$\left(\begin{smallmatrix}1 & 0\\
0 & \beta
\end{smallmatrix}\middle|\begin{smallmatrix}0\\
\gamma
\end{smallmatrix}\middle|\begin{smallmatrix}0\\
1
\end{smallmatrix}\right)$:}] $W=\left\langle e_{1},\ldots,e_{n-2},\beta e_{n-1}+\gamma e_{n}+e_{n+1}\right\rangle $.
Denoting by $\overline{v}$ a primitive multiple of $0\neq v\in\mathcal{O}_{r}^{n+1}$,
we take $V=\big\langle e_{1},\ldots,e_{n-2},\overline{\beta e_{n-1}+(\gamma-\alpha)e_{n}}\rangle+\langle e_{1},\ldots,e_{n-2},\alpha e_{n}+e_{n+1}\big\rangle$
($\beta e_{n-1}+(\gamma-\alpha)e_{n}$ is nonzero as $W\nleq V_{2}$)
-- this is a sum of $m$-spaces contained in $V_{1}$ and $V_{2}$.
\item [{$\left(\begin{smallmatrix}\beta & 0\\
0 & \gamma
\end{smallmatrix}\middle|\begin{smallmatrix}1\\
\delta
\end{smallmatrix}\middle|\begin{smallmatrix}0\\
1
\end{smallmatrix}\right)$:}] $W=\left\langle e_{1},\ldots,e_{n-3},\beta e_{n-2}+e_{n},\gamma e_{n-1}+\delta e_{n}+e_{n+1}\right\rangle $,
and we can take $V=\big\langle e_{1},\ldots,e_{n-2},e_{n}\rangle+\langle e_{1},\ldots,e_{n-2},\gamma e_{n-1}+\alpha e_{n}+e_{n+1}\big\rangle$.
\item [{$\left(\begin{smallmatrix}\beta & 0\\
0 & \gamma
\end{smallmatrix}\middle|\begin{smallmatrix}\delta\\
1
\end{smallmatrix}\middle|\begin{smallmatrix}1\\
0
\end{smallmatrix}\right)$:}] $W=\left\langle e_{1},\ldots,e_{n-3},\beta e_{n-2}+\delta e_{n}+e_{n+1},\gamma e_{n-1}+e_{n}\right\rangle $,
and we can take $V=\big\langle e_{1},\ldots,e_{n-3},e_{n-1},e_{n}\rangle+\langle e_{1},\ldots,e_{n-3},e_{n-1},\beta e_{n-2}+\alpha e_{n}+e_{n+1}\big\rangle$.\qedhere
\end{description}
\end{proof}

\section{\protect\label{sec:Spectrum-of-subgraphs}Spectrum of subgraphs}

In this Section we study the combinatorics of induced subgraphs of
the free projective space. We take $\mathcal{O},\mathfrak{p},q,r$
and $\mathcal{X}=\mathbb{P}_{fr}^{d-1}\left(\mathcal{O}_{r}\right)$
as in the previous sections, and observe the graph $\mathcal{X}_{1,n}$
for a fixed $2\leq n\leq d-1$. It is a bipartite graph with sides
of sizes $\left|\mathcal{X}_{1}\right|=S_{1}^{d}$ and $\left|\mathcal{X}_{n}\right|=S_{n}^{d}={d \choose n}_{q}\cdot q^{\left(r-1\right)n\left(d-n\right)}$
by Proposition \ref{prop:Xn}. The neighbors of any $L\in\mathcal{X}_{1}$
are the $n$-spaces containing it, hence by the correspondence theorem
$\deg\,(L)=S_{n-1}^{d-1}$, and the degree of any $V\in\mathcal{X}_{n}$
is $S_{1}^{n}$ by definition. The main goal of this section is the
following theorem and corollary:
\begin{thm}
\label{thm:Spectrum}For $\mathcal{X}=\mathbb{P}_{fr}^{d-1}\left(\mathcal{O}_{r}\right)$
with $\left|\mathcal{O}/\mathfrak{p}\right|=q$, the adjacency spectrum
of $\mathcal{X}_{1,n}$ is:\\
\begin{tabular}{|c|c|c|}
\hline 
 & Eigenvalue & Multiplicity\tabularnewline
\hline 
\hline 
 & $\pm\sqrt{{d-1 \choose n-1}_{q}\left[n\right]_{q}\,q^{\left(r-1\right)\left(\left(d-1\right)^{2}-\left(d-1\right)n+2n-2\right)}}$ & $1$\tabularnewline
\hline 
 & ${\textstyle \pm\sqrt{{d-2 \choose n-1}_{q}\,q^{\left(d-2\right)\left(r-1\right)\left(d-n\right)+r(n-1)}}}$ & $\left[d\right]_{q}-1$\tabularnewline
\hline 
${\scriptstyle 0\leq k\leq r-2}$ & $\pm\sqrt{{d-2 \choose n-1}_{q}\,q^{\left(d-2\right)\left(r-1\right)\left(d-n\right)}\cdot q^{\left(k+1\right)\left(n-1\right)}}$ & $q^{\left(r-2-k\right)\left(d-1\right)}\left(q^{d-1}-1\right)\left[d\right]_{q}$\tabularnewline
\hline 
 & $0$ & ${d \choose n}_{q}q^{\left(r-1\right)n\left(d-n\right)}\!-\![d]_{q}q^{\left(d-1\right)\left(r-1\right)}$\tabularnewline
\hline 
\end{tabular}

In particular, its spectral expansion (maximal normalized non-trivial
eigenvalue) is approximately $1/\sqrt{q^{n-1}}$.

\end{thm}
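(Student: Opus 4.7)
The strategy is to exploit the bipartite structure of $\mathcal{X}_{1,n}$. Writing $B\in\mathbb{R}^{\mathcal{X}_1\times\mathcal{X}_n}$ for the biadjacency matrix, the adjacency matrix has nonzero eigenvalues $\pm\sqrt{\mu}$ for each positive eigenvalue $\mu$ of $BB^{T}$ (each with the same multiplicity as $\mu$), plus $|\mathcal{X}_n|-|\mathcal{X}_1|$ zero eigenvalues coming from $\ker B^{T}$, so the task reduces to diagonalizing $BB^{T}\in\mathbb{R}^{\mathcal{X}_1\times\mathcal{X}_1}$. The entry $(BB^{T})_{L,L'}$ counts $n$-spaces containing both $L$ and $L'$: it is $S_{n-1}^{d-1}$ on the diagonal and otherwise depends only on the isomorphism type of $L+L'$ (free $2$-space, or type $(1;k)$ for $1\le k\le r-1$). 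Proposition~\ref{prop:contains_submodules}(2) yields
\[
BB^{T} \;=\; S_{n-1}^{d-1}\,I \;+\; S_{n-2}^{d-2}\sum_{k=0}^{r-1} q^{(d-n)k}\,A_{(k)},
\]
where $A_{(k)}$ is the $0$-$1$ indicator of ordered pairs $(L,L')$ with $L+L'$ of the given type ($k=0$ meaning free $2$-space).

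The key reformulation is to express each $A_{(k)}$ through the natural tower of reduction maps. For $0\le t\le r$ let $\pi_t\colon\mathcal{X}_1\to\mathbb{P}_{fr}^{d-1}(\mathcal{O}_t)$ be the reduction mod $\mathfrak{p}^t$, with $\mathbb{P}_{fr}^{d-1}(\mathcal{O}_0)$ declared to be a single point, and let $(N_t f)(L)=\sum_{L':\,\pi_t(L')=\pi_t(L)} f(L')$. A short calculation with primitive generators $v,v'$ of $L,L'$---using that $L\cap L'\cong\mathfrak{p}^{r-k}$ exactly when there is a unit $\alpha$ with $v\equiv\alpha v'\pmod{\mathfrak{p}^{k}}$ but not $\pmod{\mathfrak{p}^{k+1}}$---shows $L+L'$ has type $(1;k)$ exactly when $k$ is the largest $t$ with $\pi_t(L)=\pi_t(L')$. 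Thus $A_{(k)}=N_k-N_{k+1}$ (with the conventions $N_0=J$ and $N_r=I$), and Abel summation rewrites $BB^{T}$ as an explicit linear combination of $I,N_0,N_1,\ldots,N_{r-1}$.

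To diagonalize, I use the orthogonal decomposition $\mathbb{C}^{\mathcal{X}_1}=\bigoplus_{s=0}^{r} W_s$, where $W_0$ is the space of constants and, for $s\ge 1$, $W_s$ is the orthogonal complement of the pullback of $\mathbb{C}^{\mathbb{P}_{fr}^{d-1}(\mathcal{O}_{s-1})}$ inside the pullback of $\mathbb{C}^{\mathbb{P}_{fr}^{d-1}(\mathcal{O}_{s})}$. Each $W_s$ is $GL_d(\mathcal{O}_r)$-invariant and hence preserved by every $N_t$ and by $BB^{T}$. A fiber count---each fiber of $\pi_t$ has uniform size $q^{(r-t)(d-1)}$ for $t\ge 1$---shows that $N_t$ acts on $W_s$ as the scalar $q^{(r-t)(d-1)}$ when $s\le t$ and as $0$ when $s>t$, while $N_0=J$ acts as $|\mathcal{X}_1|$ on $W_0$ and as $0$ on $W_s$ for $s\ge 1$. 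Plugging this back and applying the $q$-Pascal identity $\binom{d-1}{n-1}_q=\binom{d-2}{n-2}_q+q^{n-1}\binom{d-2}{n-1}_q$ to collapse $S_{n-1}^{d-1}-S_{n-2}^{d-2}q^{(d-n)(r-1)}$, the scalar action of $BB^{T}$ on each $W_s$ telescopes to the claimed closed-form eigenvalue. The multiplicities come from $\dim W_0=1$, $\dim W_1=[d]_q-1$, and $\dim W_s=q^{(s-2)(d-1)}[d]_q(q^{d-1}-1)$ for $s\ge 2$ (computed via Proposition~\ref{prop:Xn}), matching the theorem; the $\pm\sqrt{\cdot}$ pairs transfer to the $\mathcal{X}_n$-side, and $\ker B^{T}$ supplies the remaining zero eigenvalues.

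The main obstacle is the type-vs-reduction identification $A_{(k)}=N_k-N_{k+1}$, which converts the combinatorial isomorphism type of $L+L'$ into an algebraic condition on primitive generators; once this is established, the rest---Abel summation, fiber counting, and the $q$-Pascal collapse---is routine. The approximate normalized gap $\approx 1/\sqrt{q^{n-1}}$ then follows by comparing the largest nontrivial eigenvalue (arising from $W_1$) against the trivial eigenvalue $\sqrt{S_{n-1}^{d-1}\,S_1^n}$ on $W_0$.
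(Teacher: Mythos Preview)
Your proposal is correct and rests on the same underlying observation as the paper---that the matrix $BB^{T}$ depends only on an ultrametric/hierarchical structure on $\mathcal{X}_1$---but you package this differently. The paper proves an ultrametric inequality $\Gamma^2_{a,c}\ge\min\{\Gamma^2_{a,b},\Gamma^2_{b,c}\}$, then chooses an explicit ordering of $\mathcal{X}_1$ compatible with the resulting nested partitions, so that $\Gamma^2$ becomes a recursive block matrix $G_i=G_{i+1}\otimes J+(\mathfrak{s}_i-\mathfrak{s}_{i+1})I$ whose spectrum is read off from $\Spec J_m=\{m,0^{\times(m-1)}\}$. You instead realise the same nested partitions invariantly via the reduction tower $\pi_t$, encode them in the operators $N_t$, and diagonalise simultaneously on the filtration spaces $W_s$; your identification $A_{(k)}=N_k-N_{k+1}$ is exactly the content of the paper's ultrametric lemma (both say that $|L\cap L'|$ determines the coarsest level at which $L,L'$ lie in the same block). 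The two computations are formally equivalent: your $N_t$ is, up to a scalar, the projection onto the pullback from level $t$, and the paper's $\otimes J$ step is precisely this projection written in block form. What your route buys is a coordinate-free description that makes the $GL_d(\mathcal{O}_r)$-equivariance transparent and avoids the somewhat delicate vertex-ordering argument; what the paper's route buys is a completely explicit matrix picture (illustrated for $d=3,q=r=2$) from which the eigenvalues can be read off without invoking the $q$-Pascal identity.
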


\begin{cor}
The graph $\mathcal{X}_{1,d-1}$ is $\left[d-1\right]_{q}q^{\left(d-2\right)\left(r-1\right)}$-regular
with $2\left[d\right]_{q}q^{\left(d-1\right)\left(r-1\right)}$ vertices,
non-trivial eigenvalues $\left\{ \!\sqrt{q^{\left(d-2\right)\left(k+r\right)}}\right\} _{k=0}^{r-1}$
and spectral expansion $\frac{\left(q-1\right)\sqrt{q^{d-2}}}{q^{\left(d-1\right)}-1}\approx\frac{1}{\sqrt{q^{d-2}}}$.
\end{cor}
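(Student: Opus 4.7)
The plan is to derive the corollary as a direct specialization of Theorem \ref{thm:Spectrum} to the case $n = d-1$, together with the degree/count formulas stated at the opening of Section \ref{sec:Spectrum-of-subgraphs}.

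First I would verify the structural claims. By the opening computation of the section, $\deg(L) = S_{n-1}^{d-1}$ for every $L \in \mathcal{X}_1$ and $\deg(V) = S_1^{n}$ for every $V \in \mathcal{X}_n$. Setting $n = d-1$ and using Proposition \ref{prop:Xn}, both specialise to $\binom{d-1}{1}_q \, q^{(r-1)(d-2)} = [d-1]_q\, q^{(r-1)(d-2)}$, so the bipartite graph is biregular with equal degrees on both sides and is therefore regular of the claimed degree. The total vertex count $|\mathcal{X}_1| + |\mathcal{X}_{d-1}| = 2\,[d]_q\, q^{(r-1)(d-1)}$ is immediate from Proposition \ref{prop:Xn} with $n = 1$ and $n = d-1$.

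Next I would substitute $n = d-1$ into the three families of eigenvalues listed in Theorem \ref{thm:Spectrum}, using $\binom{d-2}{d-2}_q = 1$ throughout. The Perron row simplifies, after collecting the exponent $(d-1)^2 - (d-1)^2 + 2(d-1) - 2 = 2(d-2)$, to $\pm [d-1]_q\, q^{(r-1)(d-2)}$, i.e.\ the $\pm\deg$ eigenvalues of a connected regular bipartite graph. The isolated middle row becomes $\pm\sqrt{q^{(d-2)(r-1) + r(d-2)}} = \pm\sqrt{q^{(d-2)(2r-1)}}$, which is precisely the $k = r-1$ member of the family $\pm\sqrt{q^{(d-2)(k+r)}}$. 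The parametrised row, for $0 \le k \le r-2$, collapses to $\pm\sqrt{q^{(d-2)(r-1) + (k+1)(d-2)}} = \pm\sqrt{q^{(d-2)(k+r)}}$. Thus the middle and third rows fuse into a single family indexed by $k \in \{0,1,\ldots,r-1\}$, matching the corollary.

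Finally, for the spectral expansion I would identify the largest non-trivial eigenvalue, which is the $k = r-1$ value $\sqrt{q^{(d-2)(2r-1)}}$, and divide by the degree. The ratio is
\[
\frac{q^{(d-2)(2r-1)/2}}{[d-1]_q\, q^{(r-1)(d-2)}} = \frac{q^{(d-2)/2}}{[d-1]_q} = \frac{(q-1)\sqrt{q^{d-2}}}{q^{d-1}-1} \approx \frac{1}{\sqrt{q^{d-2}}},
\]
giving the stated expansion. There is no real obstacle beyond careful exponent arithmetic; the only point worth highlighting is the coincidence at $n = d-1$ whereby the isolated second row of Theorem \ref{thm:Spectrum} merges with the parametrised third row, extending its range by one value of $k$ and producing the clean family listed in the corollary.
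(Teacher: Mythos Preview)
Your proposal is correct and is precisely the intended approach: the paper states this corollary with no separate proof, leaving it as an immediate specialization of Theorem \ref{thm:Spectrum} to $n=d-1$, together with the degree and vertex counts from Proposition \ref{prop:Xn}. Your exponent arithmetic and your observation that the second and third rows of the theorem merge into a single family $\{0,\ldots,r-1\}$ are exactly what is needed.
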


The proof appears at the end of this section. Note that the case $d=3$
and $n=d-1=2$ is the main result of \cite{Kaufman2019Freeflagsover}.
However, the proof given there is highly convoluted, and the one we
give here for the general case is much simpler, due to a better understanding
of the complex. For a start, it is a standard exercise that the adjacency
spectrum of any bi-partite graph $G=\left(L\sqcup R,E\right)$ with
$\left|L\right|\leq\left|R\right|$ satisfies 
\begin{equation}
\Spec A_{G}=\left\{ \pm\sqrt{\lambda}\,\middle|\,\lambda\in\Spec\left(A_{G}^{2}\big|_{L}\right)\right\} \cup\left\{ 0\right\} ^{|R|-|L|},\label{eq:bipartite_spec}
\end{equation}
so for us it is enough to study $A_{\mathcal{X}_{1,n}}^{2}\big|_{\mathcal{X}_{1}}$,
which we denote by $\Gamma^{2}$. For a line $\widetilde{L}\in\mathcal{X}_{1}$,
let
\[
A_{i}^{\widetilde{L}}\coloneqq\left\{ L\in\mathcal{X}_{1}\colon\widetilde{L}\cap L\cong\mathfrak{p}^{i}\right\} .
\]
\vspace{-4ex}

\begin{claim}
\begin{enumerate}
\item For any $\widetilde{L}\in\mathcal{X}_{1}$ we have $\left|A_{i}^{\widetilde{L}}\right|=\begin{cases}
1 & i=0\\
q^{i\left(d-1\right)}-q^{\left(i-1\right)\left(d-1\right)} & 0<i<r\\
S_{1}^{d}-q^{\left(r-1\right)\left(d-1\right)} & i=r
\end{cases}$.
\item For any $L\in A_{i}^{\widetilde{L}}$, the entry $\Gamma_{L,\widetilde{L}}^{2}$
only depends on $i$, and equals
\[
\mathfrak{s}_{i}:=\begin{cases}
S_{n-1}^{d-1} & i=0\\
S_{n-2}^{d-2}\cdot q^{\left(r-i\right)\left(d-n\right)} & 0<i\leq r.
\end{cases}
\]
\end{enumerate}
\end{claim}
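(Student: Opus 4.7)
The plan is to fix $\widetilde L = \langle e_1\rangle$ (using $GL_d(\mathcal{O}_r)$-transitivity on $\mathcal{X}_1$, Fact~\ref{fact:fact}), and to analyze a line $L = \langle v \rangle$ via the invariant $k := \min_{j\geq 2} \ord(v_j)$, where $\ord\colon \mathcal{O}_r \to \{0,\dots,r\}$ is the $\mathfrak{p}$-adic valuation.

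For part (1), I would first compute
\[
\widetilde L \cap L = \{\lambda v : \lambda v_j = 0 \text{ for all } j\geq 2\} = \mathfrak{p}^{r-k} v \cong \mathfrak{p}^{r-k}
\]
(the last isomorphism using primitivity of $v$), so that $L \in A_i^{\widetilde L}$ iff $k = r-i$. The case $i = 0$ forces $L = \widetilde L$. For $1 \leq i \leq r-1$ we have $k \geq 1$, so every $v_j$ with $j\geq 2$ is non-invertible, forcing $v_1 \in \mathcal{O}_r^\times$; after normalizing $v_1 = 1$ I would identify $A_i^{\widetilde L}$ with tuples $(v_2,\dots,v_d) \in (\mathfrak{p}^{r-i})^{d-1}$ that do not lie entirely in $\mathfrak{p}^{r-i+1}$, counting $q^{i(d-1)} - q^{(i-1)(d-1)}$. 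The count for $i = r$ then drops out of $|\mathcal{X}_1| = \sum_i |A_i^{\widetilde L}|$, with the telescoping sum for $i < r$ giving $q^{(r-1)(d-1)}$.

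For part (2), I would first observe that $\Gamma^2_{L,\widetilde L}$ equals the number of $n$-spaces containing $M := L + \widetilde L$, so by Proposition~\ref{prop:contains_submodules}(2) it depends only on the $GL_d(\mathcal{O}_r)$-type of $M$. The goal is thus to identify this type for each $i$. For $i = 0$, $M = \widetilde L$ has type $(1;\,)$, giving $S_{d-n}^{d-1} = S_{n-1}^{d-1}$. For $1 \leq i \leq r$, using the normalization $v_1 = 1$ from part (1) I would rewrite $M = \langle e_1, v'\rangle$ with $v' = (0,v_2,\dots,v_d) = \pi^{r-i} u$, where $u$ has at least one unit coordinate in positions $\geq 2$; a change of basis in $\mathcal{O}_r^{d-1}$ then sends $u$ to $e_2$, making $M$ equivalent to $\langle e_1, \pi^{r-i} e_2\rangle \cong \mathcal{O}_r \times \mathfrak{p}^{r-i}$ (with $\mathfrak{p}^0 = \mathcal{O}_r$ when $i = r$). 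Hence $M$ has type $(1; r-i)$ for $i < r$ and $(2;\,)$ for $i = r$, and Proposition~\ref{prop:contains_submodules}(2) combined with the symmetry ${d-2 \choose d-n}_q = {d-2 \choose n-2}_q$ yields $\Gamma^2_{L,\widetilde L} = S_{n-2}^{d-2} \cdot q^{(r-i)(d-n)} = \mathfrak{s}_i$.

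The main obstacle will be the bookkeeping of primitive generators modulo units in part (1), ensuring each line is counted exactly once; once that is in hand, part (2) is a clean identification of an isomorphism class followed by an application of Proposition~\ref{prop:contains_submodules}(2).
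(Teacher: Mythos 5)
Your proposal is correct and follows essentially the same route as the paper: normalize $\widetilde L=\langle e_1\rangle$, count lines via the valuation of the coordinates $v_2,\dots,v_d$ of a normalized generator, and for (2) identify $L+\widetilde L\cong\mathcal{O}_r\times\mathfrak{p}^{r-i}$ and apply Proposition \ref{prop:contains_submodules}(2). The only cosmetic difference is that you count each $A_i^{\widetilde L}$ as a set difference while the paper counts the cumulative unions $\bigcup_{j\le i}A_j^{\widetilde L}$ and subtracts; also note that for $i=r$ the normalization $v_1=1$ is unavailable, but your reduction $M=\langle e_1,v'\rangle$ with $v'=v-v_1e_1$ works without it.
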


\begin{proof}
(1) We can assume w.l.o.g.\ that $\widetilde{L}=\left\langle e_{1}\right\rangle $.
For $0\leq i\leq r-1$, $L\in\bigcup_{j=0}^{i}A_{j}^{\widetilde{L}}$
iff $L=\left\langle v\right\rangle $ for a primitive $v\in\mathcal{O}_{r}^{d}$
with $v_{2},\ldots,v_{d}\in\mathfrak{p}^{r-i}$, so that $v_{1}\in\mathcal{O}_{r}^{\times}$.
By scaling we can assume $v_{1}=1$, giving $\left|\bigcup_{j=0}^{\smash{i}}A_{j}^{\smash{\widetilde{L}}}\right|=\left|\mathfrak{p}^{r-i}\right|^{d-1}=q^{i\left(d-1\right)}$,
from which we deduce $\left|A_{i}^{\smash{\widetilde{L}}}\right|$
inductively, and also $\left|A_{r}^{\smash{\widetilde{L}}}\right|=\left|\mathcal{X}_{1}\right|-\left|\bigcup_{j=0}^{r-1}A_{j}^{\smash{\widetilde{L}}}\right|=S_{1}^{d}-q^{\left(r-1\right)\left(d-1\right)}$.
(2) follows from Proposition\textcolor{red}{{} }\ref{prop:contains_submodules}(2),
as the common neighbors of $L$ and $L'$ are the $n$-spaces containing
$L+\widetilde{L}\cong\mathcal{O}_{r}\times\mathfrak{p}^{r-i}$ (the
dependence only on $i$ can also be seen by the transitivity of $\text{Stab}_{GL_{d}\left(\mathcal{O}_{r}\right)}\left(\left\langle e_{1}\right\rangle \right)\curvearrowright A_{i}^{e_{1}}$).
\end{proof}
\begin{lem}
\label{lem:triangle_ineq}Let $a,b,c$ be free lines in $\mathcal{O}_{r}^{d}$.
Then $\Gamma^{2}=A_{\mathcal{X}_{1,n}}^{2}\big|_{\mathcal{X}_{1}}$
satisfies
\[
\Gamma_{a,c}^{2}\geq\min\left\{ \Gamma_{a,b}^{2},\Gamma_{b,c}^{2}\right\} ,
\]
so that $\mathfrak{s}_{0}-\Gamma_{\square,\square}^{2}$ is an ultrametric.
In particular, if $\Gamma_{a,b}^{2}\neq\Gamma_{b,c}^{2}$ then $\Gamma_{a,c}^{2}=\min\left\{ \Gamma_{a,b}^{2},\Gamma_{b,c}^{2}\right\} $.
\end{lem}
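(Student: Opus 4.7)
The plan is to reduce the inequality $\Gamma_{a,c}^{2}\geq\min\{\Gamma_{a,b}^{2},\Gamma_{b,c}^{2}\}$ to an ultrametric statement about intersection indices. By the preceding Claim, $\Gamma_{a,b}^{2}=\mathfrak{s}_{i(a,b)}$ where $i(a,b)$ is defined by $a\cap b\cong\mathfrak{p}^{i(a,b)}$. A short computation shows the sequence $\mathfrak{s}_{0},\mathfrak{s}_{1},\ldots,\mathfrak{s}_{r}$ is strictly decreasing: $\mathfrak{s}_{0}/\mathfrak{s}_{1}=[d-1]_{q}/[n-1]_{q}>1$ (since $n-1<d-1$), and $\mathfrak{s}_{i}/\mathfrak{s}_{i+1}=q^{d-n}>1$ for $1\leq i<r$. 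Consequently the desired inequality is equivalent to
\[
i(a,c)\leq\max\{i(a,b),\,i(b,c)\},
\]
and the ``in particular'' clause amounts to equality here whenever $i(a,b)\neq i(b,c)$.

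The crux is a reformulation of $i(a,b)$ via reductions. For $0\leq k\leq r$ set $\mathcal{O}_{k}=\mathcal{O}/\mathfrak{p}^{k}$ and let $\overline{a}\leq\mathcal{O}_{k}^{d}$ denote the image of a free line $a\leq\mathcal{O}_{r}^{d}$ under the projection $\mathcal{O}_{r}^{d}\twoheadrightarrow\mathcal{O}_{k}^{d}$; this image is again a free line, since a unit coordinate of a primitive $v\in a$ reduces to a unit coordinate of $\overline{v}$. Writing $\mu(a,b):=r-i(a,b)$, I will prove
\[
\mu(a,b)\geq k\quad\Longleftrightarrow\quad\overline{a}=\overline{b}\text{ in }\mathcal{O}_{k}^{d}.
\]
For ($\Rightarrow$), fix primitive generators $v,w$ of $a,b$; then $\mu(a,b)\geq k$ means $\pi^{r-k}v\in b$, so $\pi^{r-k}v=\beta w$ for some $\beta\in\mathcal{O}_{r}$. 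A valuation argument shows that $\beta=u\pi^{r-k}$ for a unit $u\in\mathcal{O}_{r}^{\times}$: one has $\beta\in\mathfrak{p}^{r-k}$ (else every coordinate $w_{j}$ would lie in $\mathfrak{p}$, contradicting primitivity of $w$), and $\beta\notin\mathfrak{p}^{r-k+1}$ (else the coordinate relation $\pi^{r-k}v_{i_{0}}=\beta w_{i_{0}}$ at a unit coordinate $v_{i_{0}}$ of $v$ would yield a contradiction on valuations). Then $\pi^{r-k}(v-uw)=0$, so $v\equiv uw\pmod{\mathfrak{p}^{k}}$, giving $\overline{a}=\overline{b}$. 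The converse lifts a scalar relation $\overline{v}=\overline{u}\,\overline{w}$ in $\mathcal{O}_{k}^{d}$ to $v\equiv uw\pmod{\mathfrak{p}^{k}}$ with $u\in\mathcal{O}_{r}^{\times}$ and multiplies by $\pi^{r-k}$ to get $\pi^{r-k}v=\pi^{r-k}uw\in b$.

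With this in hand the ultrametric property is purely formal: if $\mu(a,b)\geq k$ and $\mu(b,c)\geq k$, then $\overline{a}=\overline{b}=\overline{c}$ in $\mathcal{O}_{k}^{d}$, hence $\mu(a,c)\geq k$; taking $k=\min\{\mu(a,b),\mu(b,c)\}$ yields the required inequality. For the strict statement, assume w.l.o.g.\ that $\mu(a,b)>\mu(b,c)=:k$. Then $\overline{a}=\overline{b}$ but $\overline{b}\neq\overline{c}$ in $\mathcal{O}_{k+1}^{d}$, so $\overline{a}\neq\overline{c}$ there, forcing $\mu(a,c)\leq k$; combined with the ultrametric bound $\mu(a,c)\geq k$, this gives $\mu(a,c)=k=\min\{\mu(a,b),\mu(b,c)\}$, i.e.\ $\Gamma_{a,c}^{2}=\min\{\Gamma_{a,b}^{2},\Gamma_{b,c}^{2}\}$.

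The main obstacle is the two-sided valuation argument pinning down $\beta=u\pi^{r-k}$ in the forward direction. It is tempting to conclude only $\beta\in\mathfrak{p}^{r-k}$, but one also needs $\beta\notin\mathfrak{p}^{r-k+1}$ so that the derived congruence $v\equiv uw\pmod{\mathfrak{p}^{k}}$ has $u$ a unit rather than landing in $\mathfrak{p}$; both halves rest on primitivity of $v$ and $w$ together with the total order on ideals of $\mathcal{O}_{r}$. Once this identification is in place, the rest is a formal consequence of the fact that $\mathcal{O}_{r}\twoheadrightarrow\mathcal{O}_{k}$ sends primitive vectors to primitive vectors and respects scalar multiplication.
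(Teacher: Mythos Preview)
Your argument is correct, but it takes a considerably longer route than the paper's. The paper's proof is three lines: since $b\cong\mathcal{O}_{r}$, its submodules are totally ordered (Fact~\ref{fact:fact}(1)), so if $b\cap a\cong\mathfrak{p}^{i}$ and $b\cap c\cong\mathfrak{p}^{j}$ with $i\leq j$ then $b\cap c\leq b\cap a\leq a$, whence $b\cap c\leq a\cap c$ and therefore $a\cap c\cong\mathfrak{p}^{k}$ with $k\leq j$. That is the whole argument; the ``in particular'' clause is the standard isosceles property of any ultrametric and needs no separate treatment.

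Your approach instead reinterprets $r-i(a,b)$ as the largest $k$ for which $a$ and $b$ coincide after reduction to $\mathcal{O}_{k}^{d}$, and then reads off the ultrametric inequality as transitivity of equality. This is a genuinely different and more conceptual viewpoint: it explains \emph{why} the metric is ultrametric by exhibiting the underlying tree structure (the tower of reductions $\mathcal{O}_{r}\twoheadrightarrow\mathcal{O}_{r-1}\twoheadrightarrow\cdots\twoheadrightarrow\mathcal{O}_{0}$). The cost is the two-sided valuation computation pinning down $\beta=u\pi^{r-k}$, which you carry out carefully. The paper's argument avoids this entirely by never leaving the ambient module $b$, where the ordering of ideals does all the work.
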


\begin{proof}
We observe first that $\mathfrak{s}_{i}\geq\mathfrak{s}_{j}\iff i\leq j$.
Let $b\cap a\cong\mathfrak{p}^{i}$, $b\cap c\cong\mathfrak{p}^{j}$
and assume w.l.o.g. $i\leq j$. As $b\cong\mathcal{O}_{r}$, this
implies $b\cap c\leq b\cap a$ (Fact \ref{fact:fact}(1)). It follows
that $\mathcal{O}_{r}\cong a\geq a\cap c\geq b\cap c\cong\mathfrak{p}^{j}$,
so that $a\cap c\cong\mathfrak{p}^{k}$ for some $k\leq j$, as needed.
\end{proof}
\begin{prop}
\label{prop:Gamma2-recur}$\Gamma^{2}$ is obtained as $G_{0}$ in
the following recursive process: 
\begin{align*}
G_{r} & \coloneqq\left(\mathfrak{s}_{r}\right)\in M_{1}\left(\mathbb{Z}\right)\\
G_{r-1} & \coloneqq G_{r}\otimes J_{\left[d\right]_{q}}+\left(\mathfrak{s}_{r-1}-\mathfrak{s}_{r}\right)I_{[d]_{q}} &  & (J=\text{all one matrix})\\
G_{i} & \coloneqq G_{i+1}\otimes J_{q^{d-1}}+\left(\mathfrak{s}_{i}-\mathfrak{s}_{i+1}\right)I_{[d]_{q}q^{(r-i-1)(d-1)}} &  & \text{for \ensuremath{i=r-2,...,0}}.
\end{align*}
\end{prop}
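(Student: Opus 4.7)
My plan is to identify $\Gamma^{2}$ with $G_{0}$ by labelling lines via the tower of reductions
\[
\mathcal{X}_{1}\!\left(\mathcal{O}_{r}\right)\twoheadrightarrow\mathcal{X}_{1}\!\left(\mathcal{O}_{r-1}\right)\twoheadrightarrow\cdots\twoheadrightarrow\mathcal{X}_{1}\!\left(\mathcal{O}_{1}\right)=\mathbb{P}^{d-1}(\mathbb{F}_{q}),
\]
and showing that this tower reproduces the hierarchical block structure built into $G_{0}$. Denote by $f_{s}\colon\mathcal{X}_{1}(\mathcal{O}_{r})\to\mathcal{X}_{1}(\mathcal{O}_{s})$ the map sending $\langle v\rangle$ to $\langle v\bmod\mathfrak{p}^{s}\rangle$, which is well-defined since primitivity survives reduction. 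Because $GL_{d}(\mathcal{O}_{s+1})$ acts transitively on $\mathcal{X}_{1}(\mathcal{O}_{s+1})$ and its natural quotient $GL_{d}(\mathcal{O}_{s+1})\twoheadrightarrow GL_{d}(\mathcal{O}_{s})$ intertwines this action with that on $\mathcal{X}_{1}(\mathcal{O}_{s})$, each one-step projection $\mathcal{X}_{1}(\mathcal{O}_{s+1})\to\mathcal{X}_{1}(\mathcal{O}_{s})$ has fibers of equal size, which by Proposition \ref{prop:Xn} equals $q^{d-1}$ for every $s\ge1$. I would use this to label each $a\in\mathcal{X}_{1}(\mathcal{O}_{r})$ by a tuple $(\alpha_{0}(a),\alpha_{1}(a),\ldots,\alpha_{r-1}(a))$, where $\alpha_{0}\in\{1,\ldots,[d]_{q}\}$ labels $f_{1}(a)$ and each $\alpha_{k}\in\{1,\ldots,q^{d-1}\}$ labels the position of $f_{k+1}(a)$ in the fiber over $f_{k}(a)$.

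The substantive step is the following lemma: for distinct $a,b\in\mathcal{X}_{1}(\mathcal{O}_{r})$,
\[
a\cap b\cong\mathfrak{p}^{i}\ \iff\ f_{r-i}(a)=f_{r-i}(b)\text{ and }f_{r-i+1}(a)\neq f_{r-i+1}(b),
\]
i.e., the tuples of $a$ and $b$ first disagree at coordinate $r-i$ (with $i=0$ corresponding to $a=b$). For the easy direction, if $f_{r-i}(a)=f_{r-i}(b)$ one can choose primitive generators $v\in a,w\in b$ with $v\equiv w\pmod{\mathfrak{p}^{r-i}}$, so $\pi^{i}v=\pi^{i}w\in a\cap b$ generates a submodule of order $q^{r-i}$, forcing $a\cap b\supseteq\mathfrak{p}^{i}v$. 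For the converse, $a\cap b\cong\mathfrak{p}^{j}$ means $a\cap b=\mathfrak{p}^{j}v$ for a primitive generator $v$ of $a$; writing $\pi^{j}v=\lambda w$ for a primitive generator $w$ of $b$ and comparing annihilator orders forces $\lambda=\pi^{j}\mu$ with $\mu\in\mathcal{O}_{r}^{\times}$, whence $v\equiv\mu w\pmod{\mathfrak{p}^{r-j}}$ and hence $f_{r-j}(a)=f_{r-j}(b)$. Combined with the preceding claim, which shows $\Gamma_{a,b}^{2}=\mathfrak{s}_{k}$ depends only on the order of $a\cap b$, this gives $\Gamma_{a,b}^{2}=\mathfrak{s}_{r-m}$ where $m$ is the first coordinate of tuple-disagreement, and $\mathfrak{s}_{0}$ if $a=b$.

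With this dictionary in hand the recursion reduces to bookkeeping. I claim, by downward induction on $i$, that under the tuple labelling of length $r-i$ the matrix $G_{i}$ satisfies $(G_{i})_{(\alpha),(\alpha')}=\mathfrak{s}_{r-m}$ with $m$ the first coordinate of disagreement, and entry $\mathfrak{s}_{i}$ when the tuples coincide. The base case $G_{r}=(\mathfrak{s}_{r})$ handles the empty-tuple case. For the step from $G_{i+1}$ to $G_{i}$, the Kronecker summand $G_{i+1}\otimes J$ copies the value of $G_{i+1}$ at the length-$(r-i-1)$ prefix of the pair, which by hypothesis is $\mathfrak{s}_{r-m}$ if the prefixes differ at $m$ and $\mathfrak{s}_{i+1}$ if they coincide; the correction $(\mathfrak{s}_{i}-\mathfrak{s}_{i+1})I$ then promotes $\mathfrak{s}_{i+1}$ to $\mathfrak{s}_{i}$ exactly on the diagonal of the finer block, which is precisely the case where the full length-$(r-i)$ tuples match. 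Setting $i=0$ yields $G_{0}=\Gamma^{2}$. The one genuinely nontrivial ingredient here is the intersection-order lemma in the second paragraph; the rest is translation of the tower of reductions into a hierarchical block decomposition that the recursion mirrors exactly.
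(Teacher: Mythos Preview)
Your argument is correct and takes a genuinely different route from the paper. The paper builds the ordering of $\mathcal{X}_{1}$ by hand: it fixes a base line $\widetilde{L}$, adds the shells $A_{t}^{\widetilde{L}}$ one at a time, partitions each shell into classes by intersection size, and transports the already-constructed ordering into each class using elements of $GL_{d}(\mathcal{O}_{r})$. The off-diagonal blocks are then filled in by invoking the ultrametric inequality of Lemma~\ref{lem:triangle_ineq}. Your approach replaces all of this with the tower of reduction maps $\mathcal{X}_{1}(\mathcal{O}_{r})\to\mathcal{X}_{1}(\mathcal{O}_{s})$: the hierarchical block structure is simply the tree of fibers, and your intersection-order lemma (that $a\cap b\cong\mathfrak{p}^{i}$ iff the reductions agree precisely down to level $r-i$) does the work that the paper splits between Lemma~\ref{lem:triangle_ineq} and the explicit class-by-class construction. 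In effect you reprove the ultrametric structure via the tree of reductions rather than via the triangle inequality on $\mathfrak{s}_{0}-\Gamma^{2}_{\square,\square}$. Your version is arguably cleaner and more canonical, since the ordering comes from an intrinsic tower rather than a sequence of choices; the paper's version has the mild advantage of isolating the ultrametric lemma as a standalone fact, and of making the connection to the sets $A_{i}^{\widetilde{L}}$ (whose sizes were just computed) more visible.
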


\begin{proof}
We give a presentation of $\Gamma^{2}$, by induction/recursion. We
first choose an ordering $L_{1},\ldots,L_{S_{1}^{d}}$ for $\mathcal{X}_{1}$:
We begin with some fixed line $L_{1}=\widetilde{L}$, and continue
with $L_{2},\ldots,L_{q^{d-1}}$ covering $A_{1}^{\widetilde{L}}$
in some arbitrary order. Using this partial order we can already observe
that the top-left $q^{d-1}$ square in $\Gamma^{2}$ has $\mathfrak{s}_{0}$
on the diagonal and $\mathfrak{s}_{1}$ in the first line and column
except the first entry. Lemma \ref{lem:triangle_ineq} now implies
the rest of the block contains $\mathfrak{s}_{1}$ as well.

We continue in this manner: assume that for $2\leq t\leq r-1$ we
have defined $L_{1},\ldots,L_{q^{\left(t-1\right)\left(d-1\right)}}$
and that they cover $\bigcup_{i=0}^{t-1}A_{i}^{\widetilde{L}}$. We
now add $A_{t}^{\widetilde{L}}$ as $L_{q^{\left(t-1\right)\left(d-1\right)}+1},\ldots,L_{q^{t\left(d-1\right)}}$,
but now we need to be careful with their inner order. We split $A_{t}^{\widetilde{L}}$
into classes by $L\sim L'$ if $\left|L\cap L'\right|>q^{r-t}$, and
pick representatives $M_{2},\ldots,M_{q^{d-1}}$ for the classes.
For each $M_{j}$ we choose $g_{j}\in GL_{d}(\mathcal{O}_{r})$ with
$g_{j}\widetilde{L}=M_{j}$, and add $g_{j}L_{1},\ldots,g_{j}L_{q^{(t-1)(d-1)}}$
in the next places, starting from $L_{(j-1)q^{\left(t-1\right)\left(d-1\right)}+1}$.
Note if we take also $M_{1}=\widetilde{L}$ and $g_{1}=1$ then this
is consistent with the existing $L_{1},\ldots,L_{q^{\left(t-1\right)\left(d-1\right)}}$.
Since $G$ preserves the number of common neighbors, each diagonal
$q^{(t-1)(d-1)}$-block coming from some $M_{j}$ is identical to
the first block. Now we observe the top-left $q^{t(d-1)}$-block:
its first row and column have $\mathfrak{s}_{t}$ everywhere, save
for the first $q^{(t-1)(d-1)}$ entries. By Lemma \ref{lem:triangle_ineq},
the entries in the rest of the $q^{t(d-1)}$-block are at least $\mathfrak{s}_{t}$.
And if $L,L'$ do not belong to the same $q^{(t-1)(d-1)}$-diagonal
block, then they came from different $M_{j}$ classes, so $\left|L\cap L'\right|\leq q^{r-t}$,
so that $\Gamma_{L,L'}^{2}\leq\mathfrak{s}_{t}$ and thus equals $\mathfrak{s}_{t}$.

Finally, we add $A_{r}^{\widetilde{L}}$ as $L_{q^{(r-1)(d-1)}+1},\ldots,L_{S_{1}^{d}}$
in the same way. The only difference is that there are more equivalence
classes in $A_{r}^{\widetilde{L}}$, giving $M_{2},\ldots,M_{[d]_{q}}$
and not $q^{d-1}$ as before.

\begin{minipage}[t]{0.55\columnwidth}%
\ 

In total, the ordering we chose for $\mathcal{X}_{1}$ gives $\Gamma^{2}$
with $\mathfrak{s}_{0}$ along the diagonal, $\mathfrak{s}_{1}$ in
blocks of size $q^{d-1}$ around the diagonal, $\mathfrak{s}_{2}$
in blocks of size $q^{2\left(d-1\right)}$ around the $q^{d-1}$-blocks
of $\mathfrak{s}_{1}$'s and $\mathfrak{s}_{0}$'s, and so on (see
example on the right). Namely, 
\[
\begin{alignedat}{1}\Gamma_{L_{i},L_{j}}^{2} & =\mathfrak{s}_{\delta(i,j)},\qquad\text{where}\\
\delta(i,j) & =\min\left(\left\{ r\right\} \cup\left\{ k\,\middle|\,{\textstyle \left\lfloor \frac{i}{q^{(d-1)k}}\right\rfloor =\left\lfloor \frac{j}{q^{(d-1)k}}\right\rfloor }\right\} \right).
\end{alignedat}
\]
It is not hard to see that the recursive procedure in the Proposition
describes this matrix precisely. The advantage of the recursive presentation
is its usefulness for the proof of Theorem \ref{thm:Spectrum}.%
\end{minipage}\ \ \ %
\fcolorbox{black}{white}{\begin{minipage}[t]{0.4\columnwidth}%
\[
\scalebox{0.6}{\text{\ensuremath{\left(\begin{smallmatrix}6 & 2 & 2 & 2 & 1 & 1 & 1 & 1 & 1 & 1 & 1 & 1 & 1 & 1 & 1 & 1 & 1 & 1 & 1 & 1 & 1 & 1 & 1 & 1 & 1 & 1 & 1 & 1\\
2 & 6 & 2 & 2 & 1 & 1 & 1 & 1 & 1 & 1 & 1 & 1 & 1 & 1 & 1 & 1 & 1 & 1 & 1 & 1 & 1 & 1 & 1 & 1 & 1 & 1 & 1 & 1\\
2 & 2 & 6 & 2 & 1 & 1 & 1 & 1 & 1 & 1 & 1 & 1 & 1 & 1 & 1 & 1 & 1 & 1 & 1 & 1 & 1 & 1 & 1 & 1 & 1 & 1 & 1 & 1\\
2 & 2 & 2 & 6 & 1 & 1 & 1 & 1 & 1 & 1 & 1 & 1 & 1 & 1 & 1 & 1 & 1 & 1 & 1 & 1 & 1 & 1 & 1 & 1 & 1 & 1 & 1 & 1\\
1 & 1 & 1 & 1 & 6 & 2 & 2 & 2 & 1 & 1 & 1 & 1 & 1 & 1 & 1 & 1 & 1 & 1 & 1 & 1 & 1 & 1 & 1 & 1 & 1 & 1 & 1 & 1\\
1 & 1 & 1 & 1 & 2 & 6 & 2 & 2 & 1 & 1 & 1 & 1 & 1 & 1 & 1 & 1 & 1 & 1 & 1 & 1 & 1 & 1 & 1 & 1 & 1 & 1 & 1 & 1\\
1 & 1 & 1 & 1 & 2 & 2 & 6 & 2 & 1 & 1 & 1 & 1 & 1 & 1 & 1 & 1 & 1 & 1 & 1 & 1 & 1 & 1 & 1 & 1 & 1 & 1 & 1 & 1\\
1 & 1 & 1 & 1 & 2 & 2 & 2 & 6 & 1 & 1 & 1 & 1 & 1 & 1 & 1 & 1 & 1 & 1 & 1 & 1 & 1 & 1 & 1 & 1 & 1 & 1 & 1 & 1\\
1 & 1 & 1 & 1 & 1 & 1 & 1 & 1 & 6 & 2 & 2 & 2 & 1 & 1 & 1 & 1 & 1 & 1 & 1 & 1 & 1 & 1 & 1 & 1 & 1 & 1 & 1 & 1\\
1 & 1 & 1 & 1 & 1 & 1 & 1 & 1 & 2 & 6 & 2 & 2 & 1 & 1 & 1 & 1 & 1 & 1 & 1 & 1 & 1 & 1 & 1 & 1 & 1 & 1 & 1 & 1\\
1 & 1 & 1 & 1 & 1 & 1 & 1 & 1 & 2 & 2 & 6 & 2 & 1 & 1 & 1 & 1 & 1 & 1 & 1 & 1 & 1 & 1 & 1 & 1 & 1 & 1 & 1 & 1\\
1 & 1 & 1 & 1 & 1 & 1 & 1 & 1 & 2 & 2 & 2 & 6 & 1 & 1 & 1 & 1 & 1 & 1 & 1 & 1 & 1 & 1 & 1 & 1 & 1 & 1 & 1 & 1\\
1 & 1 & 1 & 1 & 1 & 1 & 1 & 1 & 1 & 1 & 1 & 1 & 6 & 2 & 2 & 2 & 1 & 1 & 1 & 1 & 1 & 1 & 1 & 1 & 1 & 1 & 1 & 1\\
1 & 1 & 1 & 1 & 1 & 1 & 1 & 1 & 1 & 1 & 1 & 1 & 2 & 6 & 2 & 2 & 1 & 1 & 1 & 1 & 1 & 1 & 1 & 1 & 1 & 1 & 1 & 1\\
1 & 1 & 1 & 1 & 1 & 1 & 1 & 1 & 1 & 1 & 1 & 1 & 2 & 2 & 6 & 2 & 1 & 1 & 1 & 1 & 1 & 1 & 1 & 1 & 1 & 1 & 1 & 1\\
1 & 1 & 1 & 1 & 1 & 1 & 1 & 1 & 1 & 1 & 1 & 1 & 2 & 2 & 2 & 6 & 1 & 1 & 1 & 1 & 1 & 1 & 1 & 1 & 1 & 1 & 1 & 1\\
1 & 1 & 1 & 1 & 1 & 1 & 1 & 1 & 1 & 1 & 1 & 1 & 1 & 1 & 1 & 1 & 6 & 2 & 2 & 2 & 1 & 1 & 1 & 1 & 1 & 1 & 1 & 1\\
1 & 1 & 1 & 1 & 1 & 1 & 1 & 1 & 1 & 1 & 1 & 1 & 1 & 1 & 1 & 1 & 2 & 6 & 2 & 2 & 1 & 1 & 1 & 1 & 1 & 1 & 1 & 1\\
1 & 1 & 1 & 1 & 1 & 1 & 1 & 1 & 1 & 1 & 1 & 1 & 1 & 1 & 1 & 1 & 2 & 2 & 6 & 2 & 1 & 1 & 1 & 1 & 1 & 1 & 1 & 1\\
1 & 1 & 1 & 1 & 1 & 1 & 1 & 1 & 1 & 1 & 1 & 1 & 1 & 1 & 1 & 1 & 2 & 2 & 2 & 6 & 1 & 1 & 1 & 1 & 1 & 1 & 1 & 1\\
1 & 1 & 1 & 1 & 1 & 1 & 1 & 1 & 1 & 1 & 1 & 1 & 1 & 1 & 1 & 1 & 1 & 1 & 1 & 1 & 6 & 2 & 2 & 2 & 1 & 1 & 1 & 1\\
1 & 1 & 1 & 1 & 1 & 1 & 1 & 1 & 1 & 1 & 1 & 1 & 1 & 1 & 1 & 1 & 1 & 1 & 1 & 1 & 2 & 6 & 2 & 2 & 1 & 1 & 1 & 1\\
1 & 1 & 1 & 1 & 1 & 1 & 1 & 1 & 1 & 1 & 1 & 1 & 1 & 1 & 1 & 1 & 1 & 1 & 1 & 1 & 2 & 2 & 6 & 2 & 1 & 1 & 1 & 1\\
1 & 1 & 1 & 1 & 1 & 1 & 1 & 1 & 1 & 1 & 1 & 1 & 1 & 1 & 1 & 1 & 1 & 1 & 1 & 1 & 2 & 2 & 2 & 6 & 1 & 1 & 1 & 1\\
1 & 1 & 1 & 1 & 1 & 1 & 1 & 1 & 1 & 1 & 1 & 1 & 1 & 1 & 1 & 1 & 1 & 1 & 1 & 1 & 1 & 1 & 1 & 1 & 6 & 2 & 2 & 2\\
1 & 1 & 1 & 1 & 1 & 1 & 1 & 1 & 1 & 1 & 1 & 1 & 1 & 1 & 1 & 1 & 1 & 1 & 1 & 1 & 1 & 1 & 1 & 1 & 2 & 6 & 2 & 2\\
1 & 1 & 1 & 1 & 1 & 1 & 1 & 1 & 1 & 1 & 1 & 1 & 1 & 1 & 1 & 1 & 1 & 1 & 1 & 1 & 1 & 1 & 1 & 1 & 2 & 2 & 6 & 2\\
1 & 1 & 1 & 1 & 1 & 1 & 1 & 1 & 1 & 1 & 1 & 1 & 1 & 1 & 1 & 1 & 1 & 1 & 1 & 1 & 1 & 1 & 1 & 1 & 2 & 2 & 2 & 6
\end{smallmatrix}\right)}}}
\]
Example of $\Gamma^{2}$ with $d=3,n=2$, $q=r=2$ for which $\mathfrak{s}_{0}=6,\mathfrak{s}_{1}=2,\mathfrak{s}_{2}=1$.\\
\end{minipage}}
\end{proof}
We can now finish the main proof:
\begin{proof}[Proof of Theorem \ref{thm:Spectrum}]
We can compute the spectra of $G_{i}$ from Proposition \ref{prop:Gamma2-recur}
recursively, using $\Spec J_{n}=\{n,0^{\times(n-1)}\}$, and multiplicity
of eigenvalues under $\otimes$. For $G_{0}$ we obtain:

\hfill{}%
\begin{tabular}{|c|c|c|}
\hline 
 & Eigenvalue of $G_{0}=\Gamma^{2}=A_{\mathcal{X}_{1,n}}^{2}\big|_{\mathcal{X}_{1}}$ & Multiplicity\tabularnewline
\hline 
\hline 
 & $q^{\left(r-1\right)\left(d-1\right)}\left[d\right]_{q}\mathfrak{s}_{r}+\sum_{i=0}^{r-1}q^{i\left(d-1\right)}\left(\mathfrak{s}_{i}-\mathfrak{s}_{i+1}\right)$ & $1$\tabularnewline
\hline 
 & $\sum_{i=0}^{r-1}q^{i\left(d-1\right)}\left(\mathfrak{s}_{i}-\mathfrak{s}_{i+1}\right)$ & $\left[d\right]_{q}-1$\tabularnewline
\hline 
${\scriptstyle 0\leq k\leq r-2}$ & $\sum_{i=0}^{k}q^{i\left(d-1\right)}\left(\mathfrak{s}_{i}-\mathfrak{s}_{i+1}\right)$ & $q^{\left(r-2-k\right)\left(d-1\right)}\left(q^{d-1}-1\right)\left[d\right]_{q}$\tabularnewline
\hline 
\end{tabular}\hfill{}

As $\left|\mathcal{X}_{1}\right|\leq\left|\mathcal{X}_{n}\right|$,
the spectrum of $A_{\mathcal{X}_{1,n}}$ is then obtained by (\ref{eq:bipartite_spec}),
substituting the values of $\mathfrak{s}_{i}$ and simplifying. Finally,
the spectral expansion of $\mathcal{X}_{1,n}$ is 
\[
\frac{\sqrt{{d-2 \choose n-1}_{q}\,q^{\left(d-2\right)\left(r-1\right)\left(d-n\right)+r(n-1)}}}{\sqrt{{d-1 \choose n-1}_{q}\left[n\right]_{q}\,q^{\left(r-1\right)\left(\left(d-1\right)^{2}-\left(d-1\right)n+2n-2\right)}}}=\sqrt{\frac{\left[d-n\right]_{q}}{\left[n\right]_{q}\left[d-1\right]_{q}}q^{n-1}}\approx\frac{1}{\sqrt{q^{n-1}}}.\qedhere
\]
\end{proof}

\section{\protect\label{sec:Isospectral-non-isopmorphic-grap}Isospectral
non-isomorphic graphs}
\begin{thm}
\label{thm:iso-noniso}If $p\in\mathbb{Z}$ is a prime, then for any
$r\geq2$, $d\geq3$ and $1<n<d$ the graphs 
\[
\mathbb{P}_{1,n}^{d-1}\left(\mathbb{Z}/(p^{r})\right),\quad\text{and }\quad\mathbb{P}_{1,n}^{d-1}\left(\mathbb{F}_{p}[t]/(t^{r})\right)
\]
are isospectral and non-isomorphic.
\end{thm}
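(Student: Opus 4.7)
The isospectrality is immediate from Theorem \ref{thm:Spectrum}: the listed eigenvalues and multiplicities depend only on $d$, $n$, $r$, and the residue order $q=p$, which is the same for both rings.

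For the non-isomorphism I would argue by contradiction. Suppose $\mathbb{P}_{1,n}^{d-1}(\mathbb{Z}/p^r) \cong \mathbb{P}_{1,n}^{d-1}(\mathbb{F}_p[t]/t^r)$. Corollary \ref{cor:isom-graph} upgrades this to an isomorphism of the full complexes $\mathcal{X} \cong \mathcal{X}'$, and Theorem \ref{thm:aut0X} then forces
\[
\bigl(PGL_d(\mathbb{Z}/p^r) \rtimes \Aut_{Ring}(\mathbb{Z}/p^r)\bigr) \rtimes \mathbb{Z}/2\mathbb{Z} \;\cong\; \bigl(PGL_d(\mathbb{F}_p[t]/t^r) \rtimes \Aut_{Ring}(\mathbb{F}_p[t]/t^r)\bigr) \rtimes \mathbb{Z}/2\mathbb{Z}.
\]
I would then compare orders. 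Since $|GL_d(R)|$ depends only on $|R|=p^r$ and $d$, the $|PGL_d|$ factors agree. However $\Aut_{Ring}(\mathbb{Z}/p^r)$ is trivial (any ring endomorphism fixes $1$), while $\Aut_{Ring}(\mathbb{F}_p[t]/t^r)$ has order $(p-1)p^{r-2}$: such an automorphism is determined by $t \mapsto a_1 t + a_2 t^2 + \dots + a_{r-1} t^{r-1}$ with $a_1 \in \mathbb{F}_p^\times$. This produces the required order discrepancy in every case except $(p,r) = (2,2)$.

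For $(p,r) = (2,2)$ I would distinguish $PGL_d(\mathbb{Z}/4)$ from $PGL_d(\mathbb{F}_2[t]/t^2)$ as abstract groups. The key arithmetic witness is that the reduction $GL_d(\mathbb{F}_2[t]/t^2) \twoheadrightarrow GL_d(\mathbb{F}_2)$ splits canonically via the inclusion $\mathbb{F}_2 \hookrightarrow \mathbb{F}_2[t]/t^2$, whereas no section of $GL_d(\mathbb{Z}/4) \twoheadrightarrow GL_d(\mathbb{F}_2)$ exists once $d \geq 4$: unipotent lifts like $I + E_{12}$ have order $4$ rather than $2$ in $GL_d(\mathbb{Z}/4)$. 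I would translate this into a concrete numerical invariant — e.g., the number of involutions, or the exponent of a Sylow $2$-subgroup — and track it through the passage to $PGL_d$. This is the step I expect to be the main obstacle: the two groups have identical order and share many characteristic subgroups, so identifying a clean invariant that survives the quotient by the center requires some care.

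Finally, in the exception case $(d,n,r,p) = (3,2,2,2)$ the algebraic groups happen to coincide as abstract groups, and the rigidity route cannot separate the graphs. Here I would produce a genuinely combinatorial, non-spectral invariant distinguishing $\mathcal{X}_{1,2}$ over the two rings — for instance by comparing the isomorphism types of the bipartite subgraphs induced on common $\mathcal{X}_1$-neighborhoods of pairs of planes, or by counting induced $4$- and $6$-cycles through a fixed edge. Such statistics depend on the arithmetic distinction between $\mathbb{Z}/4$ and $\mathbb{F}_2[t]/t^2$ (e.g.\ on whether the non-trivial ideal equals its own square) and are not captured by the spectrum, so one of them should suffice.
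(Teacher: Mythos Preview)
Your overall strategy coincides with the paper's: isospectrality from Theorem~\ref{thm:Spectrum}, and non-isomorphism by passing through the rigidity theorems to compare $|PGL_d(\mathbb{Z}/p^r)\rtimes\Aut_{Ring}(\mathbb{Z}/p^r)|$ with $|PGL_d(\mathbb{F}_p[t]/t^r)\rtimes\Aut_{Ring}(\mathbb{F}_p[t]/t^r)|$. (The paper compares $\Aut^0$ directly via Theorem~\ref{thm:subgraph_rigidity}(2) rather than going through Corollary~\ref{cor:isom-graph}, but this is cosmetic.) The order count disposing of all cases except $p=r=2$ is identical.

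The divergence is in how the residual case $p=r=2$ is handled. For $d\geq4$ the paper does not pursue the splitting/involution-counting program you outline; instead it invokes a theorem of Bunina stating that $PGL_d(R)$ determines the local ring $R$ whenever $d\geq4$ (or $d\geq3$ and $2\in R^\times$). Your heuristic that $I+E_{12}$ has order $4$ in $GL_d(\mathbb{Z}/4)$ but order $2$ in $GL_d(\mathbb{F}_2[t]/t^2)$ is correct, but it does not by itself separate the abstract groups: the paper in fact exhibits an explicit exceptional isomorphism $GL_3(\mathbb{Z}/4)\cong GL_3(\mathbb{F}_2[t]/t^2)$, so element-order arguments of this type cannot work without additional structure. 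Producing a clean invariant that survives to $PGL_d$ for $d\geq4$ is exactly the content of Bunina's result, and the paper treats it as a black box.

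For $d=3$ (so $n=2$) the paper confirms that the automorphism groups really are isomorphic, so the rigidity route is exhausted. It then settles this single case by a direct computer verification in Sage, presenting both graphs as Cayley graphs on $D_7\times C_2\times C_2$ with nearly identical generating sets. Your proposed cycle-counting or neighborhood-type invariants may well succeed here, but the paper does not supply a conceptual argument either.
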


\begin{proof}
Let $\mathcal{X}=\mathbb{P}_{fr}^{d-1}(\mathbb{Z}/p^{r})$ and $\mathcal{X}'=\mathbb{P}_{fr}^{d-1}(\mathbb{F}_{p}[t]/t^{r})$,
so that the graphs under questions are $\mathcal{X}_{1,n}$ and $\mathcal{X}'_{1,n}$,
which are isospectral by Theorem \ref{thm:Spectrum}. If $\mathcal{X}_{1,n}\cong\mathcal{X}'_{1,n}$
then $\Aut^{0}\left(\mathcal{X}_{1,n}\right)\cong\Aut^{0}\left(\mathcal{X}'_{1,n}\right)$,
so by Theorem \ref{thm:subgraph_rigidity}(2) $\Aut^{0}\left(\mathcal{X}\right)\cong\Aut^{0}\left(\mathcal{X}'\right)$.
Theorem \ref{thm:aut0X} now implies that 
\[
\mathcal{G}:=PGL_{d}\left(\mathbb{Z}/p^{r}\right)\rtimes\Aut_{Ring}(\mathbb{Z}/p^{r})\cong PGL_{d}\left(\mathbb{F}_{p}[t]/t^{r}\right)\rtimes\Aut_{Ring}(\mathbb{F}_{p}[t]/t^{r})=:\mathcal{G}'.
\]

In general for prime $\mathfrak{p}\trianglelefteq\mathcal{O}$ and
$\mathcal{O}/\mathfrak{p}\cong\mathbb{F}_{q}$ we have 
\[
\left|PGL_{d}\left(\mathcal{\mathcal{O}}/\mathfrak{p}^{r}\right)\right|=q^{d^{2}(r-1)}\left|PGL_{d}(\mathbb{F}_{q})\right|,
\]
and we focus on the ring automorphisms, of which $\mathbb{Z}/p^{r}$
has none. In contrast, endomorphisms of $\mathbb{F}_{p}[t]/t^{r}$
correspond to $sub_{f}\colon t\mapsto f(t)$ for $f\in t\cdot\mathbb{F}_{p}[t]$,
and $sub_{f}$ is an automorphism if $t^{2}\nmid f(t)$, since then
$f(t)^{r-1}\neq0$ shows that $t^{r-1}\notin\ker sub_{f}$. Thus,
\[
\left|\Aut_{Ring}\left(\mathbb{F}_{p}[t]/t^{r}\right)\right|=(p-1)p^{r-2}
\]
and unless $p=r=2$ we have $\left|\Aut_{Ring}\left(\mathbb{F}_{p}[t]/t^{r}\right)\right|\neq\left|\Aut_{Ring}\left(\mathbb{Z}/p^{r}\right)\right|=1$,
so that $\left|\mathcal{G}\right|\neq\left|\mathcal{G}'\right|$.

It is left to handle the case $p=r=2$, in which $\Aut_{Ring}\left(\mathbb{F}_{p}[t]/t^{r}\right)=\left\{ id\right\} $
and $\left|\mathcal{G}\right|=\left|\mathcal{G}'\right|$. The case
$d\geq4$ is handled by Theorem 10 of \cite{Bunina2019Isomorphismselementaryequivalence},
which shows that $PGL_{d}(R)$ determines $R$ for a local ring $R$,
if $d\geq4$, or $d\geq3$ and $2\in R^{\times}$. We are still left
with $d=3$, as $2$ is not invertible in $\mathbb{Z}/2^{r}$ and
$\mathbb{F}_{2}[t]/t^{r}$. Indeed, we find that there is an exceptional
isomorphism
\begin{gather*}
\Psi\colon GL_{3}\left(\mathbb{Z}/4\right)\overset{\cong}{\longrightarrow}GL_{3}\left(\mathbb{F}_{2}[t]/t^{2}\right)\text{ defined by}\\
\Psi\left(\begin{smallmatrix}0 & 1 & 1\\
0 & 0 & 1\\
1 & 0 & 0
\end{smallmatrix}\right)=\left(\begin{smallmatrix}1 & 1 & t+1\\
t & 1 & 1\\
1 & t & t+1
\end{smallmatrix}\right),\quad\Psi\left(\begin{smallmatrix}0 & 1 & 0\\
0 & 0 & 1\\
3 & 0 & 1
\end{smallmatrix}\right)=\left(\begin{smallmatrix}1 & 1 & t+1\\
1 & t & 1\\
t & 1 & 1
\end{smallmatrix}\right),
\end{gather*}
which shows that the assumptions in \cite{Bunina2019Isomorphismselementaryequivalence}
are truly necessary. This shows that $\mathcal{X}$ and $\mathcal{X}'$
actually have the same automorphism group when $d-1=p=r=2$. Nevertheless,
an explicit realization of these graphs in sage reveals them to be
non-isomorphic, settling the remaining case. It turns out that they
can be constructed as intimate Cayley graphs:
\begin{align*}
\mathcal{X} & =Cay\left(D_{7}\times C_{2}\times C_{2},\left\{ (\tau,0,0),(\tau,1,0),(\tau\sigma,1,0),(\tau\sigma,1,1),(\tau\sigma^{3},0,0),(\tau\sigma^{3},1,1)\right\} \right)\\
\mathcal{X}' & =Cay\left(D_{7}\times C_{2}\times C_{2},\left\{ (\tau,0,0),(\tau,1,0),(\tau\sigma,1,0),(\tau\sigma,1,1),(\tau\sigma^{3},1,0),(\tau\sigma^{3},0,1)\right\} \right),
\end{align*}
where $D_{7}=\left\langle \sigma,\tau\,\middle|\,\sigma^{7},\tau^{2},(\sigma\tau)^{2}\right\rangle $.
We do not know if this phenomenon holds in greater generality.
\end{proof}
If we look at $\mathcal{X}_{m,n}$ more generally, we can still prove
that $\mathbb{P}_{m,n}^{d-1}\left(\mathbb{Z}/(p^{r})\right)$ and
$\mathbb{P}_{m,n}^{d-1}\left(\mathbb{F}_{p}[t]/(t^{r})\right)$ are
not isomorphic by the same argument, but currently we do not know
whether they are isospectral. 
\begin{conjecture}
For $p,r,d$ as above and $2\leq m<n\leq d$ the graphs $\mathbb{P}_{m,n}^{d-1}\left(\mathbb{Z}/(p^{r})\right)$
and $\mathbb{P}_{m,n}^{d-1}\left(\mathbb{F}_{p}[t]/(t^{r})\right)$
are isospectral and non-isomorphic.
\end{conjecture}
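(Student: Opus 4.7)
My plan is to decouple the conjecture into its non-isomorphism and isospectrality halves; the former follows routinely from the rigidity apparatus of the paper, while the latter is the genuinely new content.

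For non-isomorphism I would run the argument of Theorem \ref{thm:iso-noniso} verbatim. A graph isomorphism $\mathcal{X}_{m,n}\cong\mathcal{X}'_{m,n}$ between the two complexes $\mathcal{X}=\mathbb{P}_{fr}^{d-1}(\mathbb{Z}/p^r)$ and $\mathcal{X}'=\mathbb{P}_{fr}^{d-1}(\mathbb{F}_p[t]/t^r)$ (after composing with $\varphi_{\bot}$ on one side to make it color-preserving) lifts via Theorem \ref{thm:subgraph_rigidity}(2) to $\Aut^{0}(\mathcal{X})\cong\Aut^{0}(\mathcal{X}')$, which by Theorem \ref{thm:aut0X} reads $PGL_d(\mathbb{Z}/p^r)\rtimes\Aut_{Ring}(\mathbb{Z}/p^r)\cong PGL_d(\mathbb{F}_p[t]/t^r)\rtimes\Aut_{Ring}(\mathbb{F}_p[t]/t^r)$. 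The order comparison used in the proof of Theorem \ref{thm:iso-noniso} already yields a contradiction outside the sporadic case $p=r=2$; and since the hypothesis $2\le m<n\le d-1$ forces $d\ge m+2\ge 4$, that sporadic case is handled by Theorem 10 of \cite{Bunina2019Isomorphismselementaryequivalence} directly---no analogue of the exceptional $d=3$ isomorphism $\Psi$ need be constructed.

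For isospectrality I would mirror Section \ref{sec:Spectrum-of-subgraphs}. After passing from $(m,n)$ to $(d-n,d-m)$ via $\varphi_{\bot}$ if necessary (which preserves spectra), we may assume $m+n\le d$, so that $|\mathcal{X}_m|\le|\mathcal{X}_n|$. The bipartite identity (\ref{eq:bipartite_spec}) then reduces the problem to showing isospectrality of $\Gamma^{2}:=A_{\mathcal{X}_{m,n}}^{2}\big|_{\mathcal{X}_m}$ and its $\mathcal{X}'$-counterpart. Exactly as in the $m=1$ case, $\Gamma^{2}_{V_1,V_2}$ counts $n$-spaces containing $V_1+V_2$, which by Proposition \ref{prop:contains_submodules}(2) depends only on the type of $V_1+V_2$, hence is constant on $GL_d(\mathcal{O}_r)$-orbits in $\mathcal{X}_m\times\mathcal{X}_m$. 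The plan is to classify these orbits by combinatorial invariants depending only on $q$ and $r$, compute the orbit sizes from the same universal formulas, and thereby realize $\Gamma^{2}$ over both rings as the evaluation of a single ``template'' block matrix $M=M(q,r,d,m,n)$; permutation-conjugacy of the two $\Gamma^{2}$'s, and hence isospectrality of $\mathcal{X}_{m,n}$ and $\mathcal{X}'_{m,n}$, is then automatic.

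The principal obstacle is the orbit classification for $m\ge 2$. When $m=1$, a pair of lines is captured by the single invariant $L_1\cap L_2\cong\mathfrak{p}^{i}$, producing the ultrametric of Lemma \ref{lem:triangle_ineq} and the clean recursive Kronecker presentation of Proposition \ref{prop:Gamma2-recur}. For $m\ge 2$ the ultrametric fails: a pair $(V_1,V_2)$ of free $m$-spaces is governed by a richer invariant---morally a nondecreasing tuple $0\le c_1\le\ldots\le c_m\le r$ coming from a simultaneous Smith-form reduction of their generators---which controls the types of both $V_1\cap V_2$ and $V_1+V_2$. Building such a normal form via row and column operations uses nothing beyond Fact \ref{fact:fact}(1)--(3), and flag-counting as in Proposition \ref{prop:contains_submodules} supplies the orbit sizes, so in principle everything is manifestly a function of $q$ and $r$ alone. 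The delicate step---the reason this is a conjecture rather than a theorem---is handling the case where $V_1+V_2$ is not free, so that its type $(m';k_1,\ldots,k_t)$ enters $\Gamma^{2}$ with torsion indices that must be matched orbit-for-orbit between the two rings; without the ultrametric shortcut available at $m=1$, either an explicit combinatorial bijection of orbits or, alternatively, a representation-theoretic decomposition of $\mathbb{C}[\mathcal{X}_m]$ as a $GL_d(\mathcal{O}_r)$-module which is demonstrably a function of $q$ and $r$ will be required.
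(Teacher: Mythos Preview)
The statement you are addressing is a \emph{conjecture} in the paper; there is no proof to compare against. Immediately before stating it, the authors write that non-isomorphism of $\mathbb{P}_{m,n}^{d-1}(\mathbb{Z}/p^{r})$ and $\mathbb{P}_{m,n}^{d-1}(\mathbb{F}_{p}[t]/t^{r})$ ``follows by the same argument'' as Theorem~\ref{thm:iso-noniso}, but that they ``currently do not know whether they are isospectral.''

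Your treatment of non-isomorphism is correct and in fact slightly cleaner than what the paper needs for Theorem~\ref{thm:iso-noniso}: the constraint $2\le m<n\le d-1$ forces $d\ge 4$, so Bunina's theorem covers the residual case $p=r=2$ outright and the exceptional isomorphism $\Psi$ for $d=3$ never arises. (Alternatively you can invoke Corollary~\ref{cor:isom-graph} directly: $\mathcal{X}_{m,n}\cong\mathcal{X}'_{m,n}$ would give $\mathcal{X}\cong\mathcal{X}'$, and hence $\Aut^{0}(\mathcal{X})\cong\Aut^{0}(\mathcal{X}')$.) This half should be regarded as settled, and the paper says as much.

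Your isospectrality outline is a reasonable plan of attack, and you are candid that it is not a proof. The reduction via $\varphi_{\bot}$ and (\ref{eq:bipartite_spec}) to the matrix $\Gamma^{2}=A_{\mathcal{X}_{m,n}}^{2}\big|_{\mathcal{X}_{m}}$ is sound, and Proposition~\ref{prop:contains_submodules}(2) does make the entries depend only on the type of $V_{1}+V_{2}$. The genuine gap---which you yourself flag---is the classification of $GL_{d}(\mathcal{O}_{r})$-orbits on pairs of $m$-spaces together with a proof that both the orbit labels and their sizes are functions of $(q,r,d,m)$ alone. For $m\ge 2$ there is no ultrametric and no Kronecker recursion as in Proposition~\ref{prop:Gamma2-recur}; the invariant of a pair is a tuple rather than a single integer, and one must show that the resulting association scheme is determined up to permutation-conjugacy by $(q,r)$. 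Until that step (or a representation-theoretic substitute) is carried out, the isospectrality claim remains open---exactly as the paper states.
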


\bibliographystyle{plain}
\bibliography{mybib}

\end{document}